\DeclareFontFamily{U}{shuffle}{}
\DeclareFontShape{U}{shuffle}{m}{n}{ <-8>shuffle7 <8->shuffle10}{}
\newcommand{\bfmu}{{\boldsymbol{\mu}}}
\newcommand{\bfk}{{\boldsymbol{\sl{k}}}}
\newcommand{\bfn}{{\boldsymbol{\sl{n}}}}
\newcommand{\bfx}{{\boldsymbol{\sl{x}}}}
\newcommand{\bfz}{{\boldsymbol{\sl{z}}}}
\newcommand{\de}{\mathrm{d}}
\def\int{\displaystyle\!int}
\def\lim{\displaystyle\!lim}
\def\sum{\displaystyle\!sum}
\def\sup{\displaystyle\!sup}
\def\inf{\displaystyle\!inf}
\def\cap{\displaystyle\!cap}
\def\max{\displaystyle\!max}
\def\min{\displaystyle\!min}
\def\frac{\displaystyle\!frac}
\let\oldsection\section
\renewcommand\section{\setcounter{equation}{0}\oldsection}
\DeclareMathOperator*{\dep}{dep}
\DeclareMathOperator{\Li}{Li}
\def\N{\mathbb{N}}
\def\Z{\mathbb{Z}}
\def\Q{\mathbb{Q}}
\def\CC{\mathbb{C}}
\def\ze{\zeta}
\theoremstyle{plain}
\newtheorem{thm}{Theorem}[section]
\newtheorem{lem}[thm]{Lemma}
\newtheorem{cor}[thm]{Corollary}
\newtheorem{con}[thm]{Conjecture}
\newtheorem{pro}[thm]{Proposition}
\theoremstyle{definition}
\newtheorem{defn}{Definition}[section]
\newtheorem{exa}[thm]{Example}
\begin{document}
\title{\bf Residue Theorem, Regularization and Parity Theorem}
\author{{{Jia Li${}^{a,}$\thanks{Email: jialimath001@pku.org.cn}}\quad and \quad Ce Xu${}^{b,}$\thanks{Email: cexu2020@ahnu.edu.cn}}\\[1mm]
\small a. School of Mathematical Sciences, Peking University, Beijing 100000, P.R. China\\
\small b. School of Mathematics and Statistics, Anhui Normal University, Wuhu 241002, P.R. China}

\date{}
\maketitle
\noindent{\bf Abstract.} In this paper, we employ contour integration and residue calculus to derive explicit parity formulas for (cyclotomic) multiple zeta values (MZVs). A key innovation lies in applying double shuffle regularization to the contour integrals, which leads to two distinct regularized parity formulas-one via shuffle and one via stuffle regularization. Notably, this demonstrates for the first time that the contour integral method can be extended to the regularized setting (including the case
$k_r=1$), thereby overcoming a limitation of previous approaches. Our results not only provide explicit parity relations at arbitrary depths but also lay the groundwork for extending this technique to other variants of multiple zeta values.

\medskip

\noindent{\bf Keywords}: (Cyclotomic) multiple zeta value; multiple (Hurwitz) zeta value; multiple (Hurwitz) polylogarithm function; contour integration; residue calculus; parity theorem; regularization.
\medskip

\noindent{\bf AMS Subject Classifications (2020):} 11M32, 11M99.

\section{Introduction}

For a multi-index $\bfk:=(k_1,\ldots, k_r)\in(\Z_{>0})^r$, we call it \emph{positive} multi-index. If, in addition, $k_r>1$, $\bfk$ is called \emph{admissible}. We put
\begin{equation*}
 |\bfk|:=k_1+\cdots+k_r,\quad \dep(\bfk):=r,
\end{equation*}
and call them the \emph{weight} and the \emph{depth} of $\bfk$, respectively.
As a convention, we denote by $\{m\}_r$ the sequence of $m$'s with $r$ repetitions.

For an admissible multi-index $\bfk:=(k_1,\ldots, k_r)$, the classical \emph{multiple zeta values} (MZVs) are defined by (\cite{H1992,DZ1994})
\begin{align}
\zeta(\bfk)\equiv \zeta(k_1,\ldots,k_r):=\sum_{0<n_1<\cdots<n_r} \frac{1}{n_1^{k_1}\cdots n_r^{k_r}}.
\end{align}
Another object frequently studied alongside classical multiple zeta values is known as the \emph{multiple zeta star values} (MZSVs), which are defined as
\begin{align}
\zeta^\star(\bfk)\equiv \zeta^\star(k_1,\ldots,k_r):=\sum_{0<n_1\leqslant \cdots\leqslant n_r} \frac{1}{n_1^{k_1}\cdots n_r^{k_r}}.
\end{align}
The concept of multiple zeta values was independently introduced in the early 1990s by Hoffman \cite{H1992} and Zagier \cite{DZ1994}. Owing to their deep connections with various mathematical and physical disciplines-such as knot theory, algebraic geometry, and theoretical physics-the study of multiple zeta values has attracted sustained interest from numerous mathematicians and physicists. After more than three decades of development, the field has accumulated a wealth of research results. For a comprehensive overview of advances prior to 2016, readers are referred to Zhao's authoritative monograph \cite{Z2016}. In addition, over the years of continued research, various generalizations and variants of multiple zeta values have been introduced and studied. These include, for instance, alternating multiple zeta values \cite{BorweinBrBr1997}, multiple $t$-values \cite{Charlton-MathAnn2025,CharltonHoffman-MathZ2025,H2019,Murakami2021}, multiple $T$-values \cite{KanekoTs2019}, Mordell-Tornheim zeta functions \cite{DSS2025}, among others. For relevant references, see, e.g., \cite{KanekoYa2018,Li2019,Li2024,XuZhao2020d} and the literature therein. 

Among various research questions on multiple zeta values and related variants, the study of parity is a relatively important issue. In \cite{Borwein-Girgensohn-1996}, Borwein and Girgensohn conjectured the following fascinating result which is called the parity result or the parity conjecture for multiple zeta values:

\begin{con}[Borwein--Girgensohn \cite{Borwein-Girgensohn-1996}, 1996]
    For $r\in \Z_{>1}$ and $\bfk=(k_1,\ldots,k_r)\in (\Z_{>0})^r$ with $k_r>1$, $\ze(k_1,\ldots,k_r)$ can be expressed in terms of lower depth multiple zeta values when its depth and weight are of different parity.
\end{con}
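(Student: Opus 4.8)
The plan is to prove the conjecture by contour integration and the residue theorem: for each reordering of $\bfk$ one extracts an explicit ``reflection formula'', and then $\zeta(\bfk)$ itself is isolated by feeding these reflection formulas, together with the double shuffle relations, into a linear system whose coefficient matrix turns out to be nonsingular exactly when $|\bfk|$ and $\dep(\bfk)$ have opposite parity. First I would fix an admissible $\bfk=(k_1,\ldots,k_r)$ and form the meromorphic function $F_{\bfk}(s)=\pi\cot(\pi s)\,s^{-k_1}\,Z_{(k_2,\ldots,k_r)}(-s)$, where $Z_{(k_2,\ldots,k_r)}(x)=\sum_{0<n_2<\cdots<n_r}\prod_{j=2}^{r}(n_j+x)^{-k_j}$ is a shifted partial multiple zeta function. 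The kernel $\pi\cot(\pi s)$ has simple poles of residue $1$ at every integer, $s^{-k_1}$ raises the order at $s=0$, and $Z_{(k_2,\ldots,k_r)}(-s)$ contributes poles at the positive integers. On the circles $|s|=N+\tfrac12$ the product decays when $k_1\geqslant2$ (the boundary case $k_1=1$ being handled by regularization, see below), so the integral over them tends to $0$ and the residue theorem gives $\sum_{m\in\Z}\Res_{s=m}F_{\bfk}(s)=0$.

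The next step is to evaluate the three families of residues. Summing over the negative integers, where only $\pi\cot(\pi s)$ is singular, reproduces $(-1)^{k_1}\zeta(\bfk)$ after the shift $m_j=n_j+n$. Summing over the positive integers is the delicate part, since there $\pi\cot(\pi s)$ collides with $Z_{(k_2,\ldots,k_r)}(-s)$: isolating the part of $Z_{(k_2,\ldots,k_r)}(-s)$ that is holomorphic at $s=n$ from the polar part (where some $n_i=n$) yields, after summation, $(-1)^{|\bfk|-k_1}\zeta(\overline{\bfk})$ with $\overline{\bfk}=(k_r,\ldots,k_1)$, together with products of MZVs of depth $<r$ and finitely many MZVs of depth $r-1$ carrying coefficients $\zeta(2\ell)$, the latter coming from the Laurent expansion $\pi\cot(\pi s)=\tfrac1{s-n}-2\sum_{\ell\geqslant1}\zeta(2\ell)(s-n)^{2\ell-1}$ against the Taylor expansion of $s^{-k_1}$. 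Finally the residue at $s=0$, a pole of order $k_1+1$, is entirely ``local'' and expands explicitly into Bernoulli/binomial combinations of $\zeta(2\ell)$'s and MZVs of depth $\leqslant r-1$. Collecting everything, $\sum\Res=0$ becomes an explicit identity of the form
\[
\zeta(\bfk)+(-1)^{|\bfk|}\zeta(\overline{\bfk})=\Big(\text{an explicit }\Q\text{-linear combination of products of MZVs of depth }<r\Big).
\]

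To pass from these reflection formulas to $\zeta(\bfk)$ alone, I would run the same contour integral for every reordering $\tau$ of $(k_1,\ldots,k_r)$ (in particular for all cyclic rotations), obtaining an identity $\zeta(\tau)+(-1)^{|\bfk|}\zeta(\overline{\tau})=L_\tau$ with $L_\tau$ of depth $<r$, and combine these with the double shuffle relations, which express certain sums of permutations of $\bfk$ in terms of products of MZVs of depth $<r$. The result is a finite linear system in the values $\zeta$ of the permutations of $\bfk$; one checks --- at depth $2$ from a single stuffle relation, at depth $3$ by inverting an explicit $6\times6$ matrix, and in general by induction on the depth --- that its coefficient matrix is nonsingular precisely when $|\bfk|+\dep(\bfk)$ is odd, and solving the system then exhibits $\zeta(\bfk)$ as a $\Q$-linear combination of products of MZVs of depth $<r$. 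The remaining point is that several of the reorderings above are non-admissible, and $k_1$ itself (or some other $k_i$ with $i<r$) may already equal $1$ for the given $\bfk$; there one replaces the divergent contour integrals and sums by their shuffle- or stuffle-regularized counterparts --- inserting the appropriate convergence factor, respectively regularizing the partial sums --- and carries the same residue computation through, which produces the two regularized parity formulas of the title and makes the argument work in full generality.

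I expect the principal difficulties, in increasing order of seriousness, to be: (i) the bookkeeping of the higher-order residues at $s=0$ and at the colliding positive integers, i.e.\ pinning down all the Bernoulli and binomial coefficients in the lower-depth remainder; (ii) the uniform estimate that kills the integral over $|s|=N+\tfrac12$ and, more substantially, its failure when an index equals $1$, which forces one to set up the regularized contour integral so that the regularization parameter can be tracked consistently through every residue; and (iii) the linear-algebra step --- proving that the combined system of reflection and double shuffle relations has the correct rank, which is exactly where the parity hypothesis $|\bfk|\not\equiv\dep(\bfk)\pmod2$ enters, and establishing this uniformly in the depth rather than case by case is, I believe, the real crux.
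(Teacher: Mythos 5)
Your contour-integral setup and the resulting reflection identity $\zeta(\bfk)+(-1)^{|\bfk|}\zeta(\ola{\bfk})\equiv(\text{lower depth and products})$ are essentially sound and parallel to what the paper does (the paper works with truncated Hurwitz-type sums $\zeta_{(m_1,m_2)}(\bfk;s)$, keeps $m_2=M$ finite, and only later lets $M\to\infty$ with explicit error estimates, which is how it makes the regularized case $k_i=1$ rigorous rather than ``inserting a convergence factor''). The genuine gap is your step (iii), which you yourself call the crux: you propose to isolate $\zeta(\bfk)$ by running the integral over all reorderings of $\bfk$ and combining with double shuffle relations into a linear system whose matrix is ``nonsingular exactly when $|\bfk|+\dep(\bfk)$ is odd, by induction on the depth.'' No such argument is given, and it is not a routine verification: the reflection identities only couple each permutation $\tau$ to its reversal $\ola{\tau}$ (so they give about $r!/2$ equations, each involving two of the $r!$ unknowns), and the stuffle symmetrization contributes symmetric-sum relations; that this system has full rank precisely under the parity hypothesis is essentially the content of the theorem itself, so asserting it ``by induction'' leaves the proof incomplete at the decisive point.

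Moreover, the detour through a linear system is unnecessary, and this is where the paper's route differs. The stuffle antipode identity
\begin{align*}
\sum_{j=0}^r(-1)^j\,\zeta^\star(k_j,\ldots,k_1)\,\zeta(k_{j+1},\ldots,k_r)=0
\end{align*}
gives $\zeta(\ola{\bfk})\equiv(-1)^{r-1}\zeta(\bfk)$ modulo products of lower-depth values, so a \emph{single} reflection identity already yields $\bigl(1-(-1)^{|\bfk|-r}\bigr)\zeta(\bfk)\equiv(\text{lower depth})$, which is the parity theorem. The paper builds exactly this into the contour integral: the antipode/truncation identities (Proposition 2.4 (4)--(5)) are inserted before computing residues, so the final regularized identity (Theorem \ref{Stuffle-Regularization}) carries the coefficient $(-1)^r+(-1)^{q+|\bfk|}$ on one top-depth term $\zeta(k_r,\ldots,k_1,q)$, and the corollary follows with no rank argument. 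If you replace your linear-algebra step by this antipode reduction (and carry out the $M\to\infty$ estimates you only sketch in (ii)), your proof closes; as written, it does not.
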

The case of depth 2 has been already considered by Euler, and the case of depth 3 was proved by Borwein and Girgensohn in \cite{Borwein-Girgensohn-1996}. Ihara-Kaneko-Zagier \cite{IKZ2006} gave the proof in the general case. Tsumura \cite{Tsu-2004} gave another proof of this result with a different method. Further, Tsumura \cite{Tsu-2007} proved that the multiple $L$-value of conductor 4 can be expressed in terms of lower depth multiple $L$-values under the condition on the parity of its depth and weight. Regrettably, none of the aforementioned proofs were able to provide a general explicit formula of parity result. When it comes to the study of explicit formulas for the parity relations of multiple zeta values and their variants, Panzer's 2017 work on functional equations for multiple polylogarithms \cite{Panzer2017} undoubtedly stands out as a highly significant contribution. Not only does this result provide parity relations for cyclotomic multiple zeta values at arbitrary depths, but it also delivers a computer program to compute the functional equations. Moreover, Panzer further derives explicit formulas for depths 2 and 3. The parity results for multiple polylogarithms established by Panzer are stated as follows \cite[Theorem 1.3]{Panzer2017}.

\begin{thm}[Panzer \cite{Panzer2017}, 2017]
    For all $r\in \Z_{>1}$ and $\bfn=(n_1,\ldots,n_r)\in(\Z_{>0})^r$, the function
\begin{align*}
\Li_{\bfn}(z_1,z_2,\ldots,z_r)-(-1)^{n_1+\cdots+n_r-r}\Li_{\bfn}(1/z_1,1/z_2,\ldots,1/z_r)
\end{align*}
is of depth at most $r-1$, meaning that it can be written as a $\Q$-linear combination of the functions
\begin{align*}
(2\pi i)^{k_0} \prod\limits_{i=1}^d \log^{k_i}(-z_i\cdots z_d)\prod\limits_{i=1}^s \Li_{{\bfn}^{(i)}}({\bfz}^{(i)}),
\end{align*}
where the indices ${\bfn}^{(i)}\in \N^{d_i}$ have total depth $d_1+\cdots+d_s<d$ and preserve the weight $|\bfk|+\sum_{i=1}^s |\bfn^{(i)}|=|\bfn|$. Each of the arguments $\bfz_j^{(i)}$ is a consecutive product $z_\mu z_{\mu+1}\cdots z_\nu$ for some $\mu\leqslant \nu$. Here, for any $(k_1,\dotsc,k_r)\in(\Z_{>0})^r$, the classical \emph{multiple polylogarithm function} with $r$-variables is defined by
\begin{align}\label{defn-mpolyf}
\Li_{k_1,\dotsc,k_r}(x_1,\dotsc,x_r):=\sum_{0<n_1<\cdots<n_r} \frac{x_1^{n_1}\dotsm x_r^{n_r}}{n_1^{k_1}\dotsm n_r^{k_r}}
\end{align}
which converges if $|x_j\cdots x_r|<1$ for all $j=1,\dotsc,r$. It can be analytically continued to a multi-valued meromorphic function on $\mathbb{C}^r$ (see \cite{Zhao2007d}).
\end{thm}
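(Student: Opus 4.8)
The plan is to prove the statement by induction on the depth $r$, carrying out at each stage a one-variable residue computation in the spirit of the contour method of this paper. The base case $r=1$ is the classical inversion formula of Jonqui\`ere,
\[
\Li_{n}(z)+(-1)^{n}\Li_{n}(1/z)=-\frac{(2\pi i)^{n}}{n!}\,B_{n}\!\left(\tfrac12+\tfrac{\log(-z)}{2\pi i}\right),
\]
a polynomial in $\log(-z)$ with coefficients in $\Q[2\pi i]$ and hence already of the asserted form (depth $0$); it is itself obtained from a single contour integral of $z^{w}\,\pi\cot(\pi w)/w^{n}$. For the inductive step I would pass to the difference variables $k_{j}:=m_{j}-m_{j-1}>0$ (with $m_{0}=0$) and the consecutive products $y_{j}:=z_{j}z_{j+1}\cdots z_{r}$, so that
\[
\Li_{\bfn}(z_{1},\dots,z_{r})=\sum_{k_{1},\dots,k_{r}>0}\frac{y_{1}^{k_{1}}\cdots y_{r}^{k_{r}}}{k_{1}^{\,n_{1}}(k_{1}+k_{2})^{\,n_{2}}\cdots(k_{1}+\cdots+k_{r})^{\,n_{r}}},
\]
fix $k_{2},\dots,k_{r}$, and realize the sum over $k_{1}$ as a contour integral of
\[
w\longmapsto \pi\cot(\pi w)\,\frac{y_{1}^{\,w}}{w^{\,n_{1}}(w+k_{2})^{\,n_{2}}\cdots(w+k_{2}+\cdots+k_{r})^{\,n_{r}}}
\]
(or of an exponential variant of the kernel with one-sided decay) over an expanding family of contours.

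Collecting residues, the poles at the positive integers $w\in\Z_{>0}$ reproduce $\Li_{\bfn}(z_{1},\dots,z_{r})$ itself, the poles at the negative integers reassemble --- after $w\mapsto -w$, as in the depth-one derivation, and summation over $k_{2},\dots,k_{r}$ --- into $\pm\,\Li_{\bfn}(1/z_{1},\dots,1/z_{r})$, the sign being $(-1)^{|\bfn|-r}=\prod_{j}(-1)^{n_{j}-1}$ from the residue signs. The remaining poles, at $w=-(k_{2}+\cdots+k_{j})$, contribute through an order-$n_{j}$ residue a derivative of $y_{1}^{\,w}\,\pi\cot(\pi w)$ against a rational function of $k_{2},\dots,k_{r}$: differentiating $y_{1}^{\,w}$ produces powers of $\log(-z_{1}\cdots z_{r})$, the Laurent coefficients of $\pi\cot(\pi w)$ at an integer supply powers of $2\pi i$ with rational coefficients, and resumming the rational functions over $k_{2},\dots,k_{r}$ yields multiple polylogarithms of depth $<r$ whose arguments are precisely consecutive products $z_{\mu}\cdots z_{\nu}$. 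Applying the inductive hypothesis to those lower-depth pieces where necessary completes the reduction; the weight is preserved at every step because an order-$n_{j}$ residue trades the factor $(\cdot)^{-n_{j}}$ for $n_{j}$ units distributed among a $\log$-power, a $2\pi i$-power, and the lower-depth polylogarithms.

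The delicate point --- and the place where the regularization announced in the abstract enters --- is the vanishing (or explicit evaluation) of the boundary contribution, i.e.\ the integral over the large contour: using the exponential decay of $\pi\cot(\pi w)$ off the real axis one shows it tends to $0$ in a suitable polydisc of the $z_{j}$'s, and the identity is then propagated to all admissible arguments by analytic continuation. When $n_{r}=1$ this boundary term no longer converges naively (the sum over $k_{r}$ diverges), so the plan is to replace each contour integral by its double-shuffle-regularized version, producing the two regularized parity formulas --- one via shuffle, one via stuffle --- and to recover the strict $n_{r}>1$ statement by specialization. I expect the main obstacles to be exactly (i) the uniform control of this boundary term throughout the nested iteration and (ii) the combinatorial bookkeeping that pins down which consecutive products $z_{\mu}\cdots z_{\nu}$ and which $\log$-powers occur; an alternative to the residue computation would be Panzer's own route via the substitution $t\mapsto 1/t$ on the iterated-integral representation together with path composition and regularized shuffle relations, but the residue method is the one native to the present paper.
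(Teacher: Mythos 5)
This statement is not proved in the paper at all: it is Panzer's theorem, quoted from \cite{Panzer2017} as background, and the paper's own contour-integral results (Theorems \ref{parity1}--\ref{Shuffle-Regularization} and the cyclotomic formulas of Section 5) concern regularized parity identities for (cyclotomic) multiple zeta values rather than a proof of the general multi-variable inversion statement. So your proposal can only be judged on its own merits, and there it has a genuine gap in the residue bookkeeping at the heart of the induction. With your integrand $\pi\cot(\pi w)\,y_1^{w}\,w^{-n_1}(w+k_2)^{-n_2}\cdots(w+k_2+\cdots+k_r)^{-n_r}$, the simple poles at negative integers $w=-m$ do \emph{not} reassemble into $\pm\Li_{\bfn}(1/z_1,\ldots,1/z_r)$. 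Splitting according to the position of $m$ relative to the partial sums $K_j=k_2+\cdots+k_j$, the range $m>K_r$ produces $(-1)^{|\bfn|}\Li_{n_r,\ldots,n_1}(1/z_r,\ldots,1/z_1)$, i.e.\ the \emph{reversed} index and argument order, while each intermediate range $K_{j-1}<m<K_j$ produces terms of \emph{total depth exactly} $r$; already at $r=2$ the range $0<m<k_2$ gives $(-1)^{n_1}\Li_{n_1}(1/z_1)\Li_{n_2}(z_2)$. Converting the reversed term into $\Li_{\bfn}(1/z_1,\ldots,1/z_r)$ via the stuffle antipode introduces further products of total depth $r$ (e.g.\ $\Li_{n_1}(1/z_1)\Li_{n_2}(1/z_2)$), which the theorem's conclusion does not allow, since it requires total depth $d_1+\cdots+d_s<r$. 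The proof only closes if you show that all these depth-$r$ products pair up (as in $\Li_{n_1}(1/z_1)\bigl[\Li_{n_2}(z_2)+(-1)^{n_2}\Li_{n_2}(1/z_2)\bigr]$ at depth two) so that the inductive hypothesis applies to each bracket and lowers the depth; this pairing-and-cancellation argument is the combinatorial core of any such proof and is entirely absent from your sketch, which instead asserts that the leftover simple-pole contributions are already ``multiple polylogarithms of depth $<r$.''

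Two smaller but real issues: $\pi\cot(\pi w)$ is merely bounded away from the integers (it is $\pi\csc(\pi w)$ that decays exponentially off the real axis), and for general complex $z_i$ the factor $y_1^{w}=e^{w\log y_1}$ grows exponentially in one imaginary direction, so the vanishing of the large-contour term holds only on a restricted set of arguments and must then be propagated by analytic continuation -- your boundary estimate as stated is not correct. Likewise, the interchange of the residue sum with the outer sums over $k_2,\ldots,k_r$, and the regularization needed when $n_r=1$ or when intermediate resummations diverge, are announced but not carried out; in the paper's own (different) setting this is exactly the content of Lemmas \ref{lemma-1}--\ref{lem-M} and Theorems \ref{thm-star} and \ref{thm-M}, so an analogous quantitative argument would be required here rather than a one-line appeal to double-shuffle regularization. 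Note also that the paper's own identities (e.g.\ Theorem \ref{Stuffle-Regularization} and the cyclotomic formulas) naturally contain reversed indices $\zeta(k_j,\ldots,k_1,q)$ and star values, which is consistent with the reversal phenomenon described above and confirms that the direct output of the residue computation is not the same-order inverted polylogarithm your sketch assumes.
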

However, it is important to note that while Panzer's paper provides a computational algorithm to derive functional equations, it still falls short of delivering an explicit formula for the parity relations of multiple zeta values and multiple polylogarithms at arbitrary depths. Hirose \cite{Hirose2025} recently established an explicit formula for the parity of multiple zeta values by employing the theory of multitangent functions developed by Bouillot \cite{Bouillot2014}. Umezawa \cite{Umezawa2025arxiv} has recently extended Hirose's parity results to the case of multiple polylogarithms.

As is well known, the theory of contour integration and the residue theorem serves as a highly effective method for studying infinite series. In their 1998 paper~\cite{Flajolet-Salvy}, Flajolet and Salvy systematically investigated the parity properties of a class of multiple zeta value variants---called ``Euler sums'' (which can be expressed as integer-linear combinations of multiple zeta values)---using contour integration. Subsequently, many authors have extended their contour integral approach to study parity results for related variants such as multiple $t$-values\cite{H2019}, multiple $T$-values\cite{KanekoTs2019}, and multiple $M$-values\cite{XuZhao2020a}---collectively referred to as ``Euler $T$-sums, Euler $S$-sums, etc.'' (which can be expressed as integer-linear combinations of multiple mixed values). Relevant results can be found in references such as \cite{Xu-Wang2022,XuZhao2020a}.

However, the results in these papers are unable to provide parity formulas for arbitrary depths of multiple zeta values, multiple $t$-values, multiple $T$-values, or multiple $M$-values. Moreover, it is important to note that the contour integration methods used in the above studies cannot handle \emph{regularized} cases (i.e., the situation where $k_r = 1$).

The aim of this paper is to employ the method of contour integration to derive explicit parity formulas for multiple zeta values. Our innovation lies in applying double shuffle regularization to the contour integrals, thereby obtaining two distinct parity formulas: one via shuffle regularization and the other via stuffle regularization. Furthermore, this approach can be extended to investigate parity properties of many other variants of multiple zeta values, such as cyclotomic multiple zeta values.

The structure of this paper is organized as follows.
In Section 2, we first define several classes of finite multiple Hurwitz zeta values with specified summation ranges. We then investigate some fundamental properties satisfied by these finite multiple Hurwitz zeta functions. Subsequently, we present series expansions or Laurent expansions of these finite multiple Hurwitz zeta values at integer points. These formulas play a crucial role in computing residues at poles of the integrand in the contour integrals discussed in later sections.

Section 3 introduces the main approach of this work and provides some necessary lemmas.

In Section 4, we first use the method of contour integration to establish several infinite series identities involving finite multiple Hurwitz zeta values. By regularizing these identities and taking appropriate limits, we derive two types of regularized parity formulas for multiple zeta values of arbitrary depth.

Finally, in the Section 5, we present parity formulas under both shuffle and stuffle regularizations for cyclotomic multiple zeta values, obtained via analogous techniques. Since the procedure closely parallels that of the main text, we omit the detailed derivation here.

\section{Properties and Expansion}

\begin{defn}\label{def2.1}
    For a positive multi-index $\bfk=(k_1,\ldots,k_r)$ , let $m_1,m_2\in\mathbb{Z}\cup\{-\infty,+\infty\}$ satisfy $m_1<m_2$. We define the following notations
    \begin{align*}
        \zeta_{(m_1,m_2)}(\bfk;s)&:=\sum_{m_1<n_1<\cdots<n_r<m_2}\frac{1}{(n_1+s)^{k_1}\cdots (n_r+s)^{k_r}},\\
        \zeta_{(m_1,m_2]}(\bfk;s)&:=\sum_{m_1<n_1<\cdots<n_r\leqslant m_2}\frac{1}{(n_1+s)^{k_1}\cdots (n_r+s)^{k_r}}.
    \end{align*}
    Similarly, we also define the following notations
    \begin{align*}
        \zeta_{(m_1,m_2)}^{\star}(\bfk;s)&=\sum_{m_1<n_1\leqslant\cdots\leqslant n_r<m_2}\frac{1}{(n_1+s)^{k_1}\cdots (n_r+s)^{k_r}},\\
        \zeta_{(m_1,m_2]}^{\star}(\bfk;s)&=\sum_{m_1<n_1\leqslant\cdots\leqslant n_r\leqslant m_2}\frac{1}{(n_1+s)^{k_1}\cdots (n_r+s)^{k_r}}.
    \end{align*}
    To ensure the convergence of the series, when $m_2=+\infty$ (respectively, $m_1=-\infty$), we require $k_r>1$ (respectively, $k_1>1$). In particular, if $s=0$, we denote the above symbol simply by
    $\zeta_{(m_1,m_2)}(\bfk):=\zeta_{(m_1,m_2)}(\bfk;0),\zeta_{(m_1,m_2]}(\bfk):=\zeta_{(m_1,m_2]}(\bfk;0),\zeta_{(m_1,m_2)}^{\star}(\bfk):=\zeta_{(m_1,m_2)}^{\star}(\bfk;0)$ and $\zeta_{(m_1,m_2]}^{\star}(\bfk)=\zeta_{(m_1,m_2]}^{\star}(\bfk;0)$.
\end{defn}

\begin{exa}\label{exa-2.1}
    We can easily see that
   $$\zeta(\bfk;s)=\zeta_{(0,+\infty)}(\bfk;s):=\sum_{0<n_1< \cdots< n_r} \frac{1}{(n_1+s)^{k_1}\cdots (n_r+s)^{k_r}},$$
   and
    $$\zeta^{\star}(\bfk;s)=\zeta^{\star}_{(0,+\infty)}(\bfk;s):=\sum_{0<n_1\leqslant \cdots\leqslant n_r} \frac{1}{(n_1+s)^{k_1}\cdots (n_r+s)^{k_r}}.$$
    In particular, for an admissible multi-index $\bfk:=(k_1,\ldots,k_r)$, we have
    $$\zeta(\bfk)=\zeta_{(0,+\infty)}(\bfk;0)=\sum_{0<n_1< \cdots< n_r} \frac{1}{n_1^{k_1}\cdots n_r^{k_r}}$$
    and
    $$\zeta^{\star}(\bfk)=\zeta^{\star}_{(0,+\infty)}(\bfk;0):=\sum_{0<n_1\leqslant \cdots\leqslant n_r} \frac{1}{n_1^{k_1}\cdots n_r^{k_r}}.$$
\end{exa}

\begin{defn}
    Let $\boldsymbol{y}=(y_1,\ldots,y_r)\in\mathbb{C}^r$ and $0\leqslant i\leqslant j\leqslant r+1$, we define the following notations
    \begin{align*}
        \boldsymbol{y}_{(i,j)}&:=(y_{i+1},y_{i+2},\ldots,y_{j-2},y_{j-1})\in\mathbb{C}^{j-i-1},\\
        \boldsymbol{y}_{[i,j)}&:=(y_{i},y_{i+1},\ldots,y_{j-2},y_{j-1})\in\mathbb{C}^{j-i},\\
        \boldsymbol{y}_{(i,j]}&:=(y_{i+1},y_{i+2},\ldots,y_{j-1},y_{j})\in\mathbb{C}^{j-i},\\
        \boldsymbol{y}_{[i,j]}&:=(y_{i},y_{i+1},\ldots,y_{j-1},y_{j})\in\mathbb{C}^{j-i+1},
    \end{align*}
    and $\overleftarrow{\boldsymbol{y}_{[i,j]}}:=(y_{j},y_{j-1},\ldots,y_{i+1},y_i)$. Similarly, $\overleftarrow{\boldsymbol{y}_{[i,j)}},\overleftarrow{\boldsymbol{y}_{(i,j]}}$ and $\overleftarrow{\boldsymbol{y}_{(i,j)}}$ can be defined in the same way.
\end{defn}
\begin{pro}\label{prop}
    Let $m_1,m_2\in\Z\cup\{-\infty,+\infty\},\ m_1<m_2$ and $n\in\mathbb{Z}$, we have the following identities
    \begin{itemize}
        \item[(1)] (Translation)
        \begin{align*}
        \zeta_{(m_1,m_2)}(k_1,\ldots,k_r;s)=\zeta_{(m_1+n,m_2+n)}(k_1,\ldots,k_r;s-n).
    \end{align*}

    \item[(2)] (Decomposition) If $m_1<n<m_2$, then we have
    \begin{align*}
        &\zeta_{(m_1,m_2)}(k_1,\ldots,k_r;s)\\
        &=\sum_{j=0}^r\zeta_{(m_1,n]}(k_1,\ldots,k_j;s)\zeta_{(n,m_2)}(k_{j+1},\ldots,k_r;s)\\
        &=\sum_{j=0}^r\zeta_{(m_1,n)}(k_1,\ldots,k_j;s)\zeta_{(n,m_2)}(k_{j+1},\ldots,k_r;s)\\
        &\quad+\sum_{j=1}^r\frac{1}{(s+n)^{k_j}}\zeta_{(m_1,n)}(k_1,\ldots,k_{j-1};s)\zeta_{(n,m_2)}(k_{j+1},\ldots,k_r;s).
    \end{align*}

    \item[(3)] (Reflection)
    $$\zeta_{(m_1,m_2)}(k_1,\ldots,k_r;s)=(-1)^{|\bfk|}\zeta_{(-m_2,-m_1)}(k_r,\ldots,k_1;-s).$$

    \item[(4)] (Antipode identity)
    $$\sum_{j=0}^r(-1)^j\zeta^{\star}_{(m_1,m_2)}(k_j,\ldots,k_1;s)\zeta_{(m_1,m_2)}(k_{j+1},\ldots,k_r;s)=0.$$

    \item[(5)] (Truncation) If $0<m_1<m_2$, then we have
    \begin{align*}
        \zeta_{(m_1,m_2)}(k_1,\ldots,k_r;s)=\sum_{j=0}^{r} (-1)^j \zeta^\star_{(0,m_1]}(k_{j},\ldots,k_1;s)\cdot\zeta_{(0,m_2)}(k_{j+1},k_{j+2},\ldots,k_r;s).
    \end{align*}

\item[(6)] (Expansion) If $m_1<m_2\leqslant0$ or $0\leqslant m_1<m_2$, then we have
\begin{align*}
    \zeta_{(m_1,m_2)}(k_1,\ldots,k_r;s)=\sum_{m=0}^{\infty}\left(\sum_{|\bfn|=m}\prod_{l=1}^r\binom{-k_l}{n_l}\zeta_{(m_1,m_2)}(k_1+n_1,\ldots,k_r+n_r)\right)s^m,
\end{align*}
where $\bfn=(n_1,\ldots,n_r)\in(\mathbb{Z}_{\geqslant0})^r$ and $|s|<1$.
    \end{itemize}
\end{pro}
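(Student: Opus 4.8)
The plan is to prove all six identities by direct manipulation of the defining series of Definition~\ref{def2.1}: items (1)--(3) are reindexings of the summation, (6) is a termwise binomial expansion, (4) is a self-contained combinatorial identity, and (5) will follow formally from (2) and (4). For (1), substitute $n_i\mapsto n_i-n$ in every summation variable: the chain $m_1<n_1<\cdots<n_r<m_2$ becomes $m_1+n<n_1<\cdots<n_r<m_2+n$ and $(n_i+s)^{k_i}$ becomes $(n_i+(s-n))^{k_i}$, which is precisely $\zeta_{(m_1+n,m_2+n)}(\bfk;s-n)$. For (3), substitute $n_i\mapsto -n_{r+1-i}$: this reverses the chain and flips its sign, so the range becomes $(-m_2,-m_1)$, the index vector becomes $(k_r,\dots,k_1)$, the parameter becomes $-s$, and each factor acquires a sign $(-1)^{k_i}$, whose product is $(-1)^{|\bfk|}$. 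For (2), fix $m_1<n<m_2$ and classify each chain $m_1<n_1<\cdots<n_r<m_2$ by the unique $j\in\{0,\dots,r\}$ with $n_j\leqslant n<n_{j+1}$; the head $m_1<n_1<\cdots<n_j\leqslant n$ contributes $\zeta_{(m_1,n]}(k_1,\dots,k_j;s)$ and the tail $\zeta_{(n,m_2)}(k_{j+1},\dots,k_r;s)$, which is the first equality. The second equality follows by isolating in each head the part with $n_j=n$, i.e.\ using $\zeta_{(m_1,n]}(k_1,\dots,k_j;s)=\zeta_{(m_1,n)}(k_1,\dots,k_j;s)+(s+n)^{-k_j}\zeta_{(m_1,n)}(k_1,\dots,k_{j-1};s)$ for $j\geqslant1$.

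For (4), I would expand $\zeta^\star_{(m_1,m_2)}(k_j,\dots,k_1;s)$ and $\zeta_{(m_1,m_2)}(k_{j+1},\dots,k_r;s)$ as sums of monomials $\prod_{i=1}^r(d_i+s)^{-k_i}$ and interchange the order of summation; after relabeling, the $j$-th summand runs over tuples $(d_1,\dots,d_r)$ of elements of the index set with $d_1\geqslant\cdots\geqslant d_j$ and $d_{j+1}<\cdots<d_r$, so the left-hand side equals $\sum_{(d_1,\dots,d_r)}c(d_1,\dots,d_r)\prod_i(d_i+s)^{-k_i}$ with $c(d_1,\dots,d_r)=\sum(-1)^j$ over all such $j$. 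To a tuple associate the word $w_1\cdots w_{r-1}$ in which $w_i$ records whether $d_i\geqslant d_{i+1}$ or $d_i<d_{i+1}$; the admissible $j$ are exactly those for which $w_1\cdots w_{j-1}$ is all of the first type and $w_{j+1}\cdots w_{r-1}$ all of the second, so there are no admissible $j$ unless $w$ consists of a block of the first type followed by a block of the second, in which case the admissible set is a pair of consecutive integers. In either case $c\equiv0$, proving (4); alternatively, one may invoke the explicit antipode of the stuffle (quasi-shuffle) Hopf algebra. Note that nothing in this argument uses the specific range, so it also holds for the half-open interval $(0,m_1]$.

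For (5), assume $0<m_1<m_2$ and start from the claimed right-hand side. In each factor $\zeta_{(0,m_2)}(k_{j+1},\dots,k_r;s)$ apply the Decomposition (2) with split point $m_1$, namely $\zeta_{(0,m_2)}(k_{j+1},\dots,k_r;s)=\sum_{l=j}^{r}\zeta_{(0,m_1]}(k_{j+1},\dots,k_l;s)\,\zeta_{(m_1,m_2)}(k_{l+1},\dots,k_r;s)$, and interchange the $j$- and $l$-summations; the result is $\sum_{l=0}^{r}\zeta_{(m_1,m_2)}(k_{l+1},\dots,k_r;s)\,B_l$ where $B_l=\sum_{j=0}^{l}(-1)^j\zeta^\star_{(0,m_1]}(k_j,\dots,k_1;s)\,\zeta_{(0,m_1]}(k_{j+1},\dots,k_l;s)$. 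By (4) over the range $(0,m_1]$ one has $B_l=0$ for $l\geqslant1$ and $B_0=1$, so only the $l=0$ term survives, giving exactly $\zeta_{(m_1,m_2)}(k_1,\dots,k_r;s)$. For (6), under either hypothesis on $(m_1,m_2)$ every summation variable satisfies $|n_i|\geqslant1>|s|$, hence $(n_i+s)^{-k_i}=n_i^{-k_i}(1+s/n_i)^{-k_i}=\sum_{p_i\geqslant0}\binom{-k_i}{p_i}s^{p_i}n_i^{-k_i-p_i}$; multiplying the $r$ expansions, interchanging summations and collecting by total degree $m=p_1+\cdots+p_r$ in $s$ yields the stated formula (with $\bfn=(p_1,\dots,p_r)$). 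The interchange is legitimate by absolute convergence: replacing each $\binom{-k}{n}$ by $\binom{k+n-1}{n}=|\binom{-k}{n}|$ and $s$ by $|s|$ converts everything into $\sum\prod_i(n_i-|s|)^{-k_i}$ over the same range, which is a finite sum when $m_1,m_2$ are finite and converges for $|s|<1$ when one endpoint is infinite (using $k_1>1$, resp.\ $k_r>1$).

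I expect the main obstacle to be the combinatorial coefficient count in (4) --- precisely, verifying that for each monomial the set of admissible indices $j$ is either empty or a pair of consecutive integers, so that the alternating sum cancels --- while the justification of the rearrangement in (6) for infinite ranges, and the bookkeeping with the reversed and half-open ranges in (2), (3) and (5), are routine but need to be carried out carefully.
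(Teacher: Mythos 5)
Your proposal is correct, and for items (1), (2), (3) and (6) it coincides with the paper's own argument (reindexing, splitting the chain at $n$, index reversal, and termwise binomial expansion — your extra remark justifying the interchange of summation by the dominating series $\sum\prod_i(|n_i|-|s|)^{-k_i}$ is a welcome addition the paper omits). Where you genuinely diverge is in (4) and (5): the paper proves both by induction on the depth $r$ (the antipode identity by peeling off $k_r$, the truncation formula by splitting $\zeta_{(0,m_2)}$ at $m_1$ and feeding the inductive hypothesis into the antipode identity), whereas you prove (4) directly, monomial by monomial, by showing that for each tuple $(d_1,\dots,d_r)$ the set of admissible $j$ is either empty or a pair of consecutive integers $\{p,p+1\}$ (with $p$ the length of the initial weakly decreasing run), so the alternating sum of signs cancels — this count is indeed correct, including the observation that a singleton admissible set is impossible — and you then obtain (5) non-inductively by substituting the decomposition $\zeta_{(0,m_2)}(k_{j+1},\dots,k_r;s)=\sum_{l\geqslant j}\zeta_{(0,m_1]}(k_{j+1},\dots,k_l;s)\,\zeta_{(m_1,m_2)}(k_{l+1},\dots,k_r;s)$ into the right-hand side, swapping the finite sums, and killing every $B_l$ with $l\geqslant1$ by (4) applied over the range $(0,m_1]$ (equivalently $(0,m_1+1)$), leaving only $B_0=1$. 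Your route buys a transparent cancellation mechanism, makes explicit that (4) is a statement about an arbitrary summation range (exactly what (5) needs), and avoids the slightly loose bookkeeping in the paper's inductive step; the paper's induction is shorter to write but rests on the same underlying antipode structure that you exhibit directly (and could equally be quoted from the quasi-shuffle Hopf algebra, as you note).
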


\begin{proof}
(1) Replacing $s$ in the denominator of the finite multiple sum defining $\zeta_{(m_1,m_2)}(\bfk;s)$ by $(n+s-n)$ and performing a direct calculation yields
\begin{align*}
    \zeta_{(m_1,m_2)}(\bfk;s)&=\sum_{m_1<n_1<\cdots<n_r<m_2}\frac{1}{(n_1+s)^{k_1}\cdots (n_r+s)^{k_r}}\\
    &=\sum_{m_1<n_1<\cdots<n_r<m_2}\frac{1}{(n_1+n+s-n)^{k_1}\cdots (n_r+n+s-n)^{k_r}}\\
    &=\sum_{m_1+n<n_1<\cdots<n_r<m_2+n}\frac{1}{(n_1+s-n)^{k_1}\cdots (n_r+s-n)^{k_r}}\\
    &=\zeta_{(m_1+n,m_2+n)}(\bfk;s-n).
\end{align*}

(2) By truncating the summation indices $ m_1 < n_1 < \cdots < n_r < m_2$ at $n$, we obtain
\begin{align*}
    &\zeta_{(m_1,m_2)}(k_1,\ldots,k_r;s)\\
     &=\sum_{m_1<n_1<\cdots<n_r<m_2}\frac{1}{(n_1+s)^{k_1}\cdots(n_r+s)^{k_r}}\\
     &=\sum_{j=0}^r\sum_{m_1<n_1<\cdots<n_j\leqslant n<n_{j+1}<\cdots<n_r<m_2}\frac{1}{(n_1+s)^{k_1}\cdots(n_r+s)^{k_r}}\\
     &=\sum_{j=0}^r\zeta_{(m_1,n]}(k_1,\ldots,k_j;s)\zeta_{(n,m_2)}(k_{j+1},\ldots,k_r;s)\\
     &=\sum_{j=0}^r\zeta_{(m_1,n)}(k_1,\ldots,k_j;s)\zeta_{(n,m_2)}(k_{j+1},\ldots,k_r;s)\\
        &\quad+\sum_{j=1}^r\frac{1}{(s+n)^{k_j}}\zeta_{(m_1,n)}(k_1,\ldots,k_{j-1};s)\zeta_{(n,m_2)}(k_{j+1},\ldots,k_r;s).
\end{align*}

(3) By changing the summation indices $m_1 < n_1 < \dots < n_r < m_2$ to  $-m_1 > -n_1 > \dots > -n_r > -m_2$, we obtain
\begin{align*}
    \zeta_{(m_1,m_2)}(k_1,\ldots,k_r;s)&=\sum_{m_1<n_1<\cdots<n_r<m_2}\frac{1}{(n_1+s)^{k_1}\cdots(n_r+s)^{k_r}}\\
    &=(-1)^{|\bfk|}\sum_{m_1<n_1<\cdots<n_r<m_2}\frac{1}{(-n_1-s)^{k_1}\cdots(-n_r-s)^{k_r}}\\
    &=(-1)^{|\bfk|}\sum_{-m_1>-n_1>\cdots>-n_r>-m_2}\frac{1}{(-n_1-s)^{k_1}\cdots(-n_r-s)^{k_r}}\\
    &=(-1)^{|\bfk|}\sum_{-m_2<n_r'<\cdots<n_1'<-m_1}\frac{1}{(n_1'-s)^{k_1}\cdots(n_r'-s)^{k_r}}\\
    &=(-1)^{|\bfk|}\zeta_{(-m_2,-m_1)}(k_r,\ldots,k_1;-s).
\end{align*}
(4) If $r=1$, then
\begin{align*}
    \zeta_{(m_1,m_2)}(k_1;s)-\zeta^{\star}_{(m_1,m_2)}(k_1;s)=\zeta_{(m_1,m_2)}(k_1;s)-\zeta_{(m_1,m_2)}(k_1;s)=0.
\end{align*}

We assume the cases $r-1$, for $r$, we have
\begin{align*}
    &\sum_{j=0}^r(-1)^j\zeta^{\star}_{(m_1,m_2)}(k_j,\ldots,k_1;s)\zeta_{(m_1,m_2)}(k_{j+1},\ldots,k_r;s)\\
    &=\sum_{j=0}^{r-2}(-1)^j\zeta^{\star}_{(m_1,m_2)}(k_j,\ldots,k_1;s)\zeta_{(m_1,m_2)}(k_{j+1},\ldots,k_r;s)\\
    &\quad+(-1)^{r-1}\zeta^{\star}_{(m_1,m_2)}(k_{r-1},\ldots,k_1;s)\zeta_{(m_1,m_2)}(k_r;s)\\
    &\quad+(-1)^{r}\zeta^{\star}_{(m_1,m_2)}(k_{r},\ldots,k_1;s)\\
    &=\sum_{j=0}^{r-2}(-1)^j\zeta^{\star}_{(m_1,m_2)}(k_j,\ldots,k_1;s)\zeta_{(m_1,m_2)}(k_{j+1},\ldots,k_{r-1};s)\cdot\zeta_{(n_{r-1},m_2)}(k_r;s)\\
    &\quad+(-1)^{r-1}\zeta^{\star}_{(m_1,m_2)}(k_{r-1},\ldots,k_1;s)\zeta_{(m_1,m_2)}(k_r;s)\\
    &\quad+(-1)^{r}\zeta^{\star}_{(m_1,m_2)}(k_{r},\ldots,k_1;s)\\
    &=(-(-1)^{r-1}\zeta^{\star}_{(m_1,m_2)}(k_{r-1},\ldots,k_1;s))\cdot\zeta_{(n_{r-1},m_2)}(k_r;s)\qquad(\text{by\ induction})\\
    &\quad+(-1)^{r-1}\zeta^{\star}_{(m_1,m_2)}(k_{r-1},\ldots,k_1;s)\cdot\zeta_{(m_1,m_2)}(k_r;s)\\
    &\quad+(-1)^{r}\zeta^{\star}_{(m_1,m_2)}(k_{r},\ldots,k_1;s)\\
    &=(-1)^{r-1}\zeta^{\star}_{(m_1,m_2)}(k_{r-1},\ldots,k_1;s)\cdot\zeta_{(m_1,n_{r-1}]}(k_r;s)\\
    &\quad+(-1)^{r}\zeta^{\star}_{(m_1,m_2)}(k_{r},\ldots,k_1;s)\\
    &=(-1)^{r-1}\zeta^{\star}_{(m_1,m_2)}(k_{r},\ldots,k_1;s)\\
    &\quad+(-1)^{r}\zeta^{\star}_{(m_1,m_2)}(k_{r},\ldots,k_1;s)\\
    &=0.
\end{align*}

(5) If $r=1$, then
\begin{align*}
    \zeta_{(m_1,m_2)}(k_1;s)&=\sum_{m_1<n_1<m_2}\frac{1}{(n_1+s)^{k_1}}\\
    &=\sum_{0<n_1<m_2}\frac{1}{(n_1+s)^{k_1}}-\sum_{0<n_1\leqslant m_1}\frac{1}{(n_1+s)^{k_1}}\\
    &=\zeta_{(0,m_2)}(k_1;s)-\zeta^{
    \star}_{(0,m_1]}(k_1;s).
\end{align*}

We assume the cases $r-1$, for $r$, since
 \begin{align*}
     &\zeta_{(0,m_2)}(k_1,\ldots,k_r;s)\\
     &=\sum_{0<n_1<\cdots<n_r<m_2}\frac{1}{(n_1+s)^{k_1}\cdots(n_r+s)^{k_r}}\\
     &=\sum_{j=0}^r\sum_{0<n_1<\cdots<n_j\leqslant m_1<n_{j+1}<\cdots<n_r<m_2}\frac{1}{(n_1+s)^{k_1}\cdots(n_r+s)^{k_r}}\\
     &=\sum_{j=0}^r\zeta_{(0,m_1]}(k_1,\ldots,k_j;s)\zeta_{(m_1,m_2)}(k_{j+1},\ldots,k_r;s)\\
     &=\zeta_{(m_1,m_2)}(k_{1},\ldots,k_r;s)\\
     &\quad+\sum_{j=1}^r\zeta_{(0,m_1]}(k_1,\ldots,k_j;s)\zeta_{(m_1,m_2)}(k_{j+1},\ldots,k_r;s)\\
     &=\zeta_{(m_1,m_2)}(k_{1},\ldots,k_r;s)\qquad(\text{by\ induction})\\
     &\quad+\sum_{j=1}^r\zeta_{(0,m_1]}(k_1,\ldots,k_j;s)\times\left(\sum_{i=j}^r(-1)^{i-j}\zeta^{\star}_{(0,m_1]}(k_i,\ldots,k_{j+1};s)\zeta_{(0,m_2)}(k_{i+1},\ldots,k_r;s)\right)\\
     &=\zeta_{(m_1,m_2)}(k_{1},\ldots,k_r;s)\\
     &\quad+\sum_{i=1}^r(-1)^i\zeta_{(0,m_2)}(k_{i+1},\ldots,k_r;s)\times\left(\sum_{j=1}^i(-1)^{j}\zeta^{\star}_{(0,m_1]}(k_i,\ldots,k_{j+1};s)\zeta_{(0,m_1]}(k_1,\ldots,k_j;s)\right)\\
     &=\zeta_{(m_1,m_2)}(k_{1},\ldots,k_r;s)\qquad(\text{by\ Antipode\ identity})\\
     &\quad+\sum_{i=1}^r(-1)^{i}(-\zeta^{\star}_{(0,m_1]}(k_i,\ldots,k_1;s))\zeta_{(0,m_2)}(k_{i+1},\ldots,k_r;s)\\
     &=\zeta_{(m_1,m_2)}(k_{1},\ldots,k_r;s)-\sum_{i=1}^r(-1)^{i}\zeta^{\star}_{(0,m_1]}(k_i,\ldots,k_1;s)\zeta_{(0,m_2)}(k_{i+1},\ldots,k_r;s),
 \end{align*}
 hence, we have
 \begin{align*}
        \zeta_{(m_1,m_2)}(k_1,\ldots,k_r;s)=\sum_{j=0}^{r} (-1)^j \zeta^\star_{(0,m_1]}(k_{j},\ldots,k_1;s)\cdot\zeta_{(0,m_2)}(k_{j+1},k_{j+2},\ldots,k_r;s).
    \end{align*}
(6) We have
\begin{align*}
    &\zeta_{(m_1,m_2)}(k_{1},\ldots,k_r;s)\\
    &=\sum_{m_1<n_1<\cdots<n_r<m_2}\frac{1}{(n_1+s)^{k_1}\cdots(n_r+s)^{k_r}}\\
    &=\sum_{m_1<n_1<\cdots<n_r<m_2}\frac{1}{n_1^{k_1}\left(1+\frac{s}{n_1}\right)^{k_1}\cdots n_r^{k_r}\left(1+\frac{s}{n_r}\right)^{k_r}}\\
   &=\sum_{m_1<n_1<\cdots<n_r<m_2}\ \sum_{j_1=0}^{\infty}\binom{-k_1}{j_1}\frac{s^{j_1}}{n_1^{k_1+j_1}}\cdots\sum_{j_r=0}^{\infty}\binom{-k_r}{j_r}\frac{s^{j_r}}{n_r^{k_r+j_r}}\\
   &=\sum_{m=0}^{\infty}\left(\sum_{|\bfn|=m}\prod_{l=1}^r\binom{-k_l}{n_l}\zeta_{(m_1,m_2)}(k_1+n_1,\ldots,k_r+n_r)\right)s^m,
\end{align*}
    where $\bfn:=(n_1,\ldots,n_r)\in(\Z_{\geqslant0})^r$ and $|\bfn|:=n_1+\cdots+n_r$, and we used the well-known result
\begin{align*}
(1+x)^{\alpha}=\sum_{n=0}^{\infty} \binom{\alpha}{n}x^n\quad (|x|<1).
\end{align*}
Thus the proposition is proven.
\end{proof}

We obtain that $\zeta_{(m_1,m_2)}(\bfk;s)$ has the following Laurent expansion or Taylor expansion at integer points.

\begin{thm}\label{thm-taylor-expansion-mhzf} Let $\bfk=(k_1,\ldots,k_r)$ be a positive multi-index. Let $m_1,m_2\in\mathbb{Z}\cup\{-\infty,+\infty\},\ m_1<m_2$. If $n\in\mathbb{Z}_{\geqslant-m_1}\cup\mathbb{Z}_{\leqslant-m_2}$, and $|s-n|<1$, then we have
\begin{align*}
\zeta_{(m_1,m_2)}(\bfk;s)=\sum_{m=0}^\infty \left(\sum_{|\bfn|=m} \prod\limits_{l=1}^r \binom{-k_l}{n_l}\zeta_{(n+m_1,n+m_2)}(\bfk+\bfn)\right)(s-n)^m,
\end{align*}
where $\bfn:=(n_1,\ldots,n_r)\in(\Z_{\geqslant0})^r$ and $|\bfn|:=n_1+\cdots+n_r$.
\end{thm}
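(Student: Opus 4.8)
The plan is to deduce Theorem~\ref{thm-taylor-expansion-mhzf} directly from two parts of Proposition~\ref{prop} that have already been established --- Translation (part~(1)) and Expansion (part~(6)) --- the point being merely to recenter the expansion from the origin to the integer $n$. First I would apply Proposition~\ref{prop}(1) with shift $n$ to write
\begin{align*}
\zeta_{(m_1,m_2)}(\bfk;s)=\zeta_{(m_1+n,\,m_2+n)}(\bfk;\,s-n).
\end{align*}
Because the hypothesis is $|s-n|<1$, the right-hand side is now a function of the variable $s-n$ lying in the unit disk, which is precisely the range in which the Taylor/Laurent expansion of part~(6) is valid (with the parameter $s$ there played by $s-n$).

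Next I would check that the translated interval $(m_1+n,m_2+n)$ satisfies the sign hypothesis of Proposition~\ref{prop}(6). The assumption $n\in\mathbb{Z}_{\geqslant-m_1}\cup\mathbb{Z}_{\leqslant-m_2}$ splits into two (mutually exclusive, since $m_1<m_2$) cases: if $n\geqslant-m_1$ then $m_1$ is necessarily a finite integer and $0\leqslant m_1+n<m_2+n$; if $n\leqslant-m_2$ then $m_2$ is a finite integer and $m_1+n<m_2+n\leqslant0$. In either case the pair $(m_1+n,m_2+n)$ is of the form ``$m_1'<m_2'\leqslant0$'' or ``$0\leqslant m_1'<m_2'$'' demanded by part~(6). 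Moreover the convergence provisos built into Definition~\ref{def2.1} --- namely $k_1>1$ when the left endpoint is $-\infty$ and $k_r>1$ when the right endpoint is $+\infty$ --- are inherited unchanged by the translated interval, so every zeta symbol that will appear on the right-hand side is a genuinely convergent series.

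Finally I would apply Proposition~\ref{prop}(6) to $\zeta_{(m_1+n,m_2+n)}(\bfk;s-n)$, obtaining
\begin{align*}
\zeta_{(m_1+n,m_2+n)}(\bfk;s-n)=\sum_{m=0}^\infty\left(\sum_{|\bfn|=m}\prod_{l=1}^r\binom{-k_l}{n_l}\zeta_{(m_1+n,m_2+n)}(\bfk+\bfn)\right)(s-n)^m,
\end{align*}
and substituting this into the Translation identity from the first step gives exactly the asserted formula, with no further analytic estimate required beyond what part~(6) already supplies.

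\textbf{Expected main obstacle.} There is no real analytic difficulty here; the whole content is already packaged in parts~(1) and~(6) of Proposition~\ref{prop}. The only point needing care is the endpoint bookkeeping in the second step: one must observe that when $m_1=-\infty$ (respectively $m_2=+\infty$) the set $\mathbb{Z}_{\geqslant-m_1}\cup\mathbb{Z}_{\leqslant-m_2}$ automatically forces $n$ into the branch whose translated endpoint is finite, so that the hypotheses of part~(6) are genuinely met and the identity is non-vacuous.
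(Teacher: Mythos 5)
Your proposal is correct and follows exactly the paper's own argument: translate via Proposition~\ref{prop}(1) to recenter at $n$, then expand via Proposition~\ref{prop}(6), whose hypothesis ($m_1+n<m_2+n\leqslant 0$ or $0\leqslant m_1+n<m_2+n$) is guaranteed by the assumption $n\in\mathbb{Z}_{\geqslant-m_1}\cup\mathbb{Z}_{\leqslant-m_2}$. Your extra endpoint bookkeeping (finiteness of the relevant endpoint and inheritance of the convergence conditions) is a slight elaboration of what the paper leaves implicit, but the route is the same.
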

\begin{proof}
By direct calculations, we obtain
\begin{align*}
\zeta_{(m_1,m_2)}(\bfk;s)&=\zeta_{(n+m_1,n+m_2)}(\bfk;s-n)\qquad(\text{by\ translation})\\
(\text{by\ expansion})&=\sum_{m=0}^\infty\left(\sum_{|\bfn|=m}\prod_{l=1}^r\binom{-k_l}{n_l}\zeta_{(n+m_1,n+m_2)}(\bfk+\bfn)  \right)(s-n)^m.
\end{align*}
This yields the desired result.
\end{proof}

In particular, if $n=0$ and $|s|<1$, then
\begin{align}
\zeta_{(m_1,m_2)}(\bfk;s)=\sum_{m=0}^{\infty}\left( \sum_{|\bfn|=m} \prod\limits_{l=1}^r \binom{-k_l}{n_l} \zeta_{(m_1,m_2)}(\bfk)\right)s^m.
\end{align}

\begin{thm}\label{thm-Laurent-expansion-mhzf} Let $\bfk=(k_1,\ldots,k_r)$ be a positive multi-index. Let $m_1,m_2\in\mathbb{Z}\cup\{-\infty,+\infty\},\ m_1<m_2$. If $-m_2<n<-m_1$, and $|s-n|<1$, then we have
\begin{align}\label{Laurent-expansion-mhzf}
\zeta_{(m_1,m_2)}(\bfk;s)=\left(\sum_{m=0}^{\infty}a_m(s-n)^m+\sum_{j=1}^r\sum_{m=0}^{\infty}b_{m,j}(s-n)^{m-k_j}\right),
\end{align}
where
\begin{align*}
    a_m&:=\sum_{|\bfn|=m}\prod_{l=1}^r\binom{-k_l}{n_l}\sum_{j=0}^r\zeta_{(n+m_1,0)}(\bfk_{[1,j]}+\bfn_{[1,j]})\zeta_{(0,n+m_2)}(\bfk_{(j,r]}+\bfn_{(j,r]}),\\
    b_{m,j}&:=\sum_{|\bfn|-n_j=m}\prod_{l\neq j}\binom{-k_l}{n_l}\zeta_{(n+m_1,0)}(\bfk_{[1,j)}+\bfn_{[1,j)})\zeta_{(0,n+m_2)}(\bfk_{(j,r]}+\bfn_{(j,r]})
\end{align*}
and $\bfn:=(n_1,\ldots,n_r)\in(\Z_{\geqslant0})^r$.
\end{thm}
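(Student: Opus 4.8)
The plan is to mimic the proof of Theorem~\ref{thm-taylor-expansion-mhzf}, but now the translated interval $(n+m_1,n+m_2)$ straddles the origin, so the Expansion part~(6) of Proposition~\ref{prop} no longer applies directly and we must first split off the single index $n_\ell=0$ on which $1/(n_\ell+s)$ becomes singular as $s\to n$. First I would invoke Translation to write $\zeta_{(m_1,m_2)}(\bfk;s)=\zeta_{(n+m_1,n+m_2)}(\bfk;s-n)$; since $-m_2<n<-m_1$ we have $n+m_1<0<n+m_2$, so $0$ lies strictly inside the new interval. Then I would apply the Decomposition identity~(2) at the point $0$ (legitimate because $n+m_1<0<n+m_2$) to obtain
\begin{align*}
\zeta_{(n+m_1,n+m_2)}(\bfk;t)
&=\sum_{j=0}^r\zeta_{(n+m_1,0)}(\bfk_{[1,j]};t)\,\zeta_{(0,n+m_2)}(\bfk_{(j,r]};t)\\
&\quad+\sum_{j=1}^r\frac{1}{t^{k_j}}\zeta_{(n+m_1,0)}(\bfk_{[1,j)};t)\,\zeta_{(0,n+m_2)}(\bfk_{(j,r]};t),
\end{align*}
where $t=s-n$ and I have written $\bfk_{[1,j]}=(k_1,\dots,k_j)$, $\bfk_{(j,r]}=(k_{j+1},\dots,k_r)$, etc.

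Now every zeta-factor on the right has a summation interval that is entirely non-positive or entirely non-negative (namely $(n+m_1,0)$ with $0$ as the \emph{open} right endpoint, and $(0,n+m_2)$ with $0$ as the open left endpoint — in both cases no summation index equals $0$). Hence each such factor satisfies the hypothesis of Proposition~\ref{prop}(6), so for $|t|<1$ I can expand each one as a convergent power series in $t$:
\begin{align*}
\zeta_{(n+m_1,0)}(\bfk_{[1,j]};t)=\sum_{\bfp}\Bigl(\prod_{l=1}^{j}\binom{-k_l}{p_l}\Bigr)\zeta_{(n+m_1,0)}(\bfk_{[1,j]}+\bfp)\,t^{|\bfp|},
\end{align*}
and similarly for the $(0,n+m_2)$-factors and for the truncated variants $\bfk_{[1,j)}$. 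Substituting these expansions into the Decomposition formula and collecting powers of $t$, the first sum (the one without the $1/t^{k_j}$ prefactor) produces the regular part $\sum_m a_m t^m$, and the $j$-th term of the second sum contributes $t^{-k_j}$ times a power series, i.e.\ $\sum_m b_{m,j}t^{m-k_j}$; reading off the coefficients via the Cauchy product gives exactly the stated formulas for $a_m$ and $b_{m,j}$ (note the index variable $\bfn$ in the statement plays the role of the concatenation of the multi-indices $\bfp$ coming from the two factors, with the convention $n_j=0$ in the $b_{m,j}$ term, which is why the constraint there reads $|\bfn|-n_j=m$ and the product omits the $l=j$ factor).

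The only genuine issue to check carefully — and the step I expect to be the main obstacle — is the bookkeeping of the coefficients: one must verify that after forming the Cauchy product of the two power series in each summand of~(2) and then summing over $j$, the coefficient of $t^m$ in the regular part is precisely $\sum_{|\bfn|=m}\prod_l\binom{-k_l}{n_l}\sum_{j=0}^r\zeta_{(n+m_1,0)}(\bfk_{[1,j]}+\bfn_{[1,j]})\zeta_{(0,n+m_2)}(\bfk_{(j,r]}+\bfn_{(j,r]})$, and likewise that the singular contributions with different $k_j$ do not interfere (they may share monomials $t^{m-k_j}$ only when some $k_i=k_j$, but since~\eqref{Laurent-expansion-mhzf} is written as a sum over $j$ this causes no problem). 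Convergence is not an issue: each factor's series converges absolutely for $|t|<1$ by Proposition~\ref{prop}(6), so the finite sum of Cauchy products converges absolutely for $|s-n|<1$, and rearrangement into the Laurent form~\eqref{Laurent-expansion-mhzf} is justified. This completes the plan.
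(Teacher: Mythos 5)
Your proposal is correct and follows essentially the same route as the paper's proof: translate via Proposition~\ref{prop}(1) so that $0$ lies inside the interval, split at $0$ with the Decomposition identity (2), and then expand each factor over the purely negative or purely positive range by Proposition~\ref{prop}(6), collecting coefficients into $a_m$ and $b_{m,j}$. The coefficient bookkeeping you flag is exactly what the paper's computation carries out, so there is no gap.
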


\begin{proof}
 We have
    \begin{align*}
        &\zeta_{(m_1,m_2)}(\bfk;s)\\
        &=\zeta_{(n+m_1,n+m_2)}(\bfk;s-n)\\
        &=\sum_{j=0}^r\zeta_{(n+m_1,0)}(\bfk_{[1,j]};s-n)\cdot\zeta_{(0,n+m_2)}(\bfk_{(j,r]};s-n)\\
        &\quad+\sum_{j=1}^r\frac{1}{(s-n)^{k_j}}\zeta_{(n+m_1,0)}(\bfk_{[1,j)};s-n)\cdot\zeta_{(0,n+m_2)}(\bfk_{(j,r]};s-n)\\
        &=\sum_{j=0}^r\sum_{n_1=0}^{\infty}\cdots\sum_{n_r=0}^{\infty}\prod_{l=1}^r\binom{-k_l}{n_l}\zeta_{(n+m_1,0)}(\bfk_{[1,j]}+\bfn_{[1,j]})\cdot\zeta_{(0,n+m_2)}(\bfk_{(j,r]}+\bfn_{(j,r]})(s-n)^{|\bfn|}\\
        &+\sum_{j=1}^r\sum_{l\neq j}\sum_{n_l=0}^{\infty}\prod_{i\neq j}\binom{-k_i}{n_i}\zeta_{(n+m_1,0)}(\bfk_{[1,j)}+\bfn_{[1,j)})\cdot\zeta_{(0,n+M)}(\bfk_{(j,r]}+\bfn_{(j,r]})(s-n)^{|\bfn|-n_j-k_j}\\
        &=\sum_{m=0}^{\infty}a_m(s-n)^m+\sum_{j=1}^r\sum_{m=0}^{\infty}b_{m,j}(s-n)^{m-k_j},
\end{align*}
where
\begin{align*}
    a_m&:=\sum_{|\bfn|=m}\prod_{l=1}^r\binom{-k_l}{n_l}\sum_{j=0}^r\zeta_{(n+m_1,0)}(\bfk_{[1,j]}+\bfn_{[1,j]})\zeta_{(0,n+m_2)}(\bfk_{(j,r]}+\bfn_{(j,r]}),\\
    b_{m,j}&:=\sum_{|\bfn|-n_j=m}\prod_{l\neq j}\binom{-k_l}{n_l}\zeta_{(n+m_1,0)}(\bfk_{[1,j)}+\bfn_{[1,j)})\zeta_{(0,n+m_2)}(\bfk_{(j,r]}+\bfn_{(j,r]})
\end{align*}
and $\bfn:=(n_1,\ldots,n_r)\in(\Z_{\geqslant0})^r$.
\end{proof}

\begin{exa}
    In particular, if $r=1,k_1=k$ and $n\in\mathbb{Z}_{<0}$ then ($|s-n|<1$)
\begin{align*}\label{Laurent-expansion-mhzf-cases}
\zeta(k;s)&=\sum_{m=0}^\infty \binom{-k}{m} \zeta_{(n,0)}(k+m)(s-n)^m+\frac1{(s-n)^k}+\sum_{m=0}^\infty \binom{-k}{m} \zeta(k+m)(s-n)^m.
\end{align*}
\end{exa}

\section{Main Ideas}

Flajolet and Salvy \cite{Flajolet-Salvy} defined a kernel function $\xi(s)$ with two requirements: (1). $\xi(s)$ is meromorphic in the whole complex plane. (2). $\xi(s)$ satisfies $\xi(s)=o(s)$ over an infinite collection of circles $\left| s \right| = {\rho _k}$ with ${\rho _k} \to \infty $. Applying these two conditions of kernel
function $\xi(s)$, Flajolet and Salvy discovered the following residue lemma.

\begin{lem}\emph{(cf.\ \cite{Flajolet-Salvy})}\label{lem-redisue-thm}
Let $\xi(s)$ be a kernel function and let $r(s)$ be a rational function which is $O(s^{-2})$ at infinity. Then

\begin{align}
\sum\limits_{\alpha  \in O} {{\mathop{\rm Res}}{{\left( {r(s)\xi(s)},\alpha  \right)}}}  + \sum\limits_{\beta  \in S}  {{\mathop{\rm Res}}{{\left( {r(s)\xi(s)}, \beta  \right)}}}  = 0,
\end{align}
where $S$ is the set of poles of $r(s)$ and $O$ is the set of poles of $\xi(s)$ that are not poles $r(s)$. Here ${\mathop{\rm Re}\nolimits} s{\left( {r(s)},\alpha \right)} $ denotes the residue of $r(s)$ at $s= \alpha.$
\end{lem}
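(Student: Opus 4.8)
The plan is the classical argument: integrate $r(s)\xi(s)$ around the circles $|s|=\rho_k$ furnished by the kernel hypothesis, apply the residue theorem on each, and let $k\to\infty$. Before doing so I would record the finiteness facts needed. Since $r(s)$ is rational (and $O(s^{-2})$ at infinity) it has only finitely many poles, so $S$ is finite and is contained in the disk $|s|<\rho_k$ for all sufficiently large $k$; since $\xi(s)$ is meromorphic its poles are isolated, so $r(s)\xi(s)$ has only finitely many poles in each $|s|<\rho_k$. If necessary I would perturb each $\rho_k$ to a nearby value so that the circle $|s|=\rho_k$ passes through no pole of $r(s)\xi(s)$; this leaves intact both $\rho_k\to\infty$ and the growth bound $\xi(s)=o(s)$ along the family.

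For such a $k$ the residue theorem gives
\[
\frac{1}{2\pi i}\oint_{|s|=\rho_k} r(s)\xi(s)\,\de s=\sum_{\gamma}\Res\bigl(r(s)\xi(s),\gamma\bigr),
\]
the sum over all poles $\gamma$ of $r(s)\xi(s)$ with $|\gamma|<\rho_k$. Each such $\gamma$ is a pole of $r$ or of $\xi$: if it is a pole of $\xi$ that is not a pole of $r$ it lies in $O$, and otherwise it lies in $S$; at every point outside $S\cup O$ the integrand is holomorphic and contributes nothing (and there is no double counting, since $O$ and $S$ are disjoint by definition). Hence, once $\rho_k$ encloses all of $S$, the right-hand side above equals $\sum_{\beta\in S}\Res\bigl(r(s)\xi(s),\beta\bigr)+\sum_{\alpha\in O,\ |\alpha|<\rho_k}\Res\bigl(r(s)\xi(s),\alpha\bigr)$.

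It then remains to bound the contour integral. On $|s|=\rho_k$ we have $|r(s)|=O(\rho_k^{-2})$ by hypothesis and $|\xi(s)|=o(\rho_k)$ by the kernel property, whence $|r(s)\xi(s)|=o(\rho_k^{-1})$ uniformly on the circle; since the circle has length $2\pi\rho_k$, the integral is $o(1)$ and tends to $0$ as $k\to\infty$. Letting $k\to\infty$ in the identity of the previous paragraph shows that the partial sums over $\{\alpha\in O:\ |\alpha|<\rho_k\}$ converge, with limit $-\sum_{\beta\in S}\Res\bigl(r(s)\xi(s),\beta\bigr)$; this is exactly the asserted identity, and in particular shows that the series over $O$ converges along this exhaustion.

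The whole argument is routine; the only points requiring care are the bookkeeping ones — choosing the radii $\rho_k$ to avoid poles, checking that every pole of $r(s)\xi(s)$ with nonzero residue lies in $S\cup O$, and interpreting the (possibly infinite) sum over $O$ as the limit of partial sums over the nested disks. I do not expect any genuine obstacle.
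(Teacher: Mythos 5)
Your argument is correct and is exactly the classical Flajolet--Salvy proof: the paper itself gives no proof of this lemma (it is quoted from the cited reference), and that reference's proof is precisely this residue-theorem-on-large-circles argument with the $o(\rho_k^{-1})$ bound on the integrand killing the contour integral. The only wrinkle is your proposal to perturb the radii $\rho_k$: the hypothesis $\xi(s)=o(s)$ is guaranteed only on the given circles, so instead of perturbing, note that a circle on which this bound holds cannot pass through a pole of $\xi$, and for $k$ large the finitely many poles of $r$ lie strictly inside $|s|<\rho_k$, so the given circles already avoid all poles of $r(s)\xi(s)$ and no perturbation is needed.
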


\begin{lem}\emph{(cf.\ \cite{Flajolet-Salvy})}\label{tran-Laext} For any $n\in \Z$,
\begin{align}
&\pi \cot \left( {\pi s} \right)\mathop  = \frac{-2}{s-n}\sum\limits_{k = 0}^\infty  {\zeta( 2k){{\left( {s - n} \right)}^{2k }}},\ \zeta(0)=-1/2,\label{expansion-one-sine}\\
&\frac{\pi }
{{\sin \left( {\pi s} \right)}}\mathop  = \frac{2(-1)^n}{s-n}\sum\limits_{k = 0}^\infty  {\bar \zeta \left( {2k} \right){{\left( {s - n} \right)}^{2k }}},\ \bar{\zeta}(0)=\frac{1}{2}  ,\label{expansion-one-cosine}
\end{align}
where ${\bar \zeta} (s)$ denotes the alternating Riemann zeta function defined by
\[\bar \zeta \left( s \right) := \sum\limits_{n = 1}^\infty  {\frac{{{{\left( { - 1} \right)}^{n - 1}}}}{{{n^s}}}}=(1-2^{1-s})\zeta(s) \quad(\Re(s)>0).\]
\end{lem}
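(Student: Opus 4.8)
\textbf{Proof proposal for Lemma~\ref{tran-Laext}.}

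The plan is to derive both expansions from the classical Laurent/Taylor expansions of $\pi\cot(\pi s)$ and $\pi/\sin(\pi s)$ around an integer point, combined with the standard generating-function identities for even zeta values. First I would recall the well-known partial fraction (Mittag-Leffler) expansions
\begin{align*}
\pi\cot(\pi s)=\frac1s+\sum_{k=1}^\infty\frac{2s}{s^2-k^2},\qquad
\frac{\pi}{\sin(\pi s)}=\frac1s+\sum_{k=1}^\infty(-1)^k\frac{2s}{s^2-k^2},
\end{align*}
both valid for $s\notin\Z$. By the translation $s\mapsto s-n$ (using $\cot(\pi(s-n))=\cot(\pi s)$ and $\sin(\pi(s-n))=(-1)^n\sin(\pi s)$), it suffices to treat the case $n=0$ and then carry the sign $(-1)^n$ through in the sine case.

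For $n=0$, I would expand $\pi\cot(\pi s)-\frac1s$ as a power series around $s=0$. Writing $\frac{2s}{s^2-k^2}=-\frac{2s}{k^2}\cdot\frac{1}{1-s^2/k^2}=-2\sum_{j\geq1}\frac{s^{2j-1}}{k^{2j}}$ for $|s|<1$ and summing over $k\geq1$ gives, after interchanging the two sums (justified by absolute convergence for $|s|<1$),
\begin{align*}
\pi\cot(\pi s)=\frac1s-2\sum_{j=1}^\infty\zeta(2j)s^{2j-1}.
\end{align*}
Pulling out the factor $\frac{-2}{s}$ and using the convention $\zeta(0)=-\tfrac12$ to absorb the leading $\frac1s=\frac{-2}{s}\cdot(-\tfrac12)$ term, this becomes exactly $\frac{-2}{s}\sum_{k\geq0}\zeta(2k)s^{2k}$, which is \eqref{expansion-one-sine} with $n=0$. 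Replacing $s$ by $s-n$ then yields the general statement since the left side is $n$-periodic. The cosecant case is identical: the alternating sum $\sum_{k\geq1}(-1)^k\frac{2s}{s^2-k^2}=-2\sum_{j\geq1}\big(\sum_{k\geq1}\frac{(-1)^{k}}{k^{2j}}\big)s^{2j-1}=2\sum_{j\geq1}\bar\zeta(2j)s^{2j-1}$ (note the sign: $\sum_k(-1)^k k^{-2j}=-\bar\zeta(2j)$), and combining with $\frac1s=\frac{2}{s}\cdot\tfrac12$ and the convention $\bar\zeta(0)=\tfrac12$ gives $\frac{\pi}{\sin(\pi s)}=\frac2s\sum_{k\geq0}\bar\zeta(2k)s^{2k}$ at $n=0$; the translation $s\mapsto s-n$ introduces the factor $(-1)^n$ because $\sin(\pi s)=(-1)^n\sin(\pi(s-n))$, producing \eqref{expansion-one-cosine}. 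The identity $\bar\zeta(s)=(1-2^{1-s})\zeta(s)$ is the elementary rearrangement $\sum(-1)^{n-1}n^{-s}=\sum n^{-s}-2\sum_{n\text{ even}}n^{-s}$.

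The only genuine point requiring care—rather than an obstacle—is the justification for swapping the order of summation $\sum_k\sum_j$, which follows from the double series $\sum_{k,j}s^{2j-1}/k^{2j}$ being absolutely convergent for $|s|<1$ (dominated by $\frac{|s|}{1-|s|^2}\zeta(2)$ after summing the geometric series in $j$ first), so Fubini/Tonelli applies; in the alternating case one may bound by the same majorant. Everything else is bookkeeping with the conventions $\zeta(0)=-\tfrac12$, $\bar\zeta(0)=\tfrac12$, which are precisely chosen to make the $\frac1{s-n}$ singular term fit the stated closed form. Since these are classical facts, I would present the argument briefly and cite \cite{Flajolet-Salvy} for the statement as used there.
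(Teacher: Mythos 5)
Your derivation is correct: the partial-fraction expansions, the geometric-series expansion in $j$, the interchange of summation justified by the majorant $\frac{|s|}{1-|s|^2}\zeta(2)$, the sign bookkeeping $\sum_{k\ge1}(-1)^k k^{-2j}=-\bar\zeta(2j)$, and the periodicity/antiperiodicity facts $\cot(\pi(s-n))=\cot(\pi s)$, $\sin(\pi s)=(-1)^n\sin(\pi(s-n))$ all check out and reproduce \eqref{expansion-one-sine} and \eqref{expansion-one-cosine} exactly, including the role of the conventions $\zeta(0)=-\tfrac12$, $\bar\zeta(0)=\tfrac12$. The paper itself gives no proof of this lemma (it is quoted from Flajolet--Salvy), and your argument is precisely the standard derivation underlying that citation, so nothing further is needed.
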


In \cite{Flajolet-Salvy}, Flajolet and Salvy used residue computations on large circular contour and specific functions to obtain more independent relations for Euler sums. These functions are of the form $\xi(s)r(s)$, where $r(s):=1/{s^q}$ and $\xi(s)$ is a product of cotangent (or cosecant) and polygamma functions.

We can try replacing the kernel function in their paper with multiple Hurwitz zeta functions and use contour integration along with the residue theorem to derive some relations among multiple zeta values. For instance, consider the following contour integrals:
\begin{align}\label{contourintegral-one}
\lim_{R\rightarrow \infty}\oint_{C_R} \frac{\pi \cot(\pi s)\zeta_{(m_1,m_2)}(\bfk;s)}{s^q}ds
\end{align}
and
\begin{align}\label{contourintegral-two}
\lim_{R\rightarrow \infty}\oint_{C_R} \frac{\pi \csc(\pi s)\zeta_{(m_1,m_2)}(\bfk;s)}{s^q}ds,
\end{align}
where $C_R$ denote a circular contour with radius $R$. It is straightforward to observe that both of the above contour integrals are equal to zero.

\section{Parity Conjecture and Regularization}
In this section, we first compute the residue of the contour integral \eqref{contourintegral-one} by combining Lemmas \ref{lem-redisue-thm} and \ref{tran-Laext} with Proposition \ref{prop}, Theorems \ref{thm-taylor-expansion-mhzf} and \ref{thm-Laurent-expansion-mhzf} , thereby establishing several formulas involving finite multiple zeta values. We then regularize these formulas and take the limit to derive parity formulas for the regularization of multiple zeta values.

We always assume that $m_1,m_2\in\mathbb{Z}\cup\{-\infty,+\infty\},\ m_1<m_2$, $\bfk=(k_1,\ldots,k_r)$ be a positive multi-index and $q\in\mathbb{Z}_{>1}$.

\begin{thm}\label{parity1}
    If $m_2\leqslant0$ or $m_1\geqslant0$, then we have
    \begin{align*}
        &\sum_{n\leqslant-m_2,n\neq0}\frac{\zeta_{(n+m_1,n+m_2)}(\bfk)}{n^q}+\sum_{n\geqslant-m_1,n\neq0}\frac{\zeta_{(n+m_1,n+m_2)}(\bfk)}{n^q}\\
        &-2\sum_{2k+m=q}\left(\sum_{|\bfn|=m} \prod\limits_{l=1}^r \binom{-k_l}{n_l} \zeta_{(m_1,m_2)}(\bfk+\bfn)\right)\zeta(2k)\\
        &+\sum_{-m_2<n<-m_1}\sum_{j=0}^r\frac{\zeta_{(n+m_1,0)}(\bfk_{[1,j]})\zeta_{(0,n+m_2)}(\bfk_{(j,r]})}{n^q}\\
&-2 \sum_{-m_2<n<-m_1}\sum_{j=1}^r \sum_{2k+m= k_j} \left(\sum_{|\bfn|=m}\binom{-q}{n_j}\prod_{l\neq j}\binom{-k_l}{n_l}\frac{\zeta_{(n+m_1,0)}(\bfk_{[1,j)}+\bfn_{[1,j)})}{n^{q+n_j}}
\atop\times\zeta_{(0,n+m_2)}(\bfk_{(j,r]}+\bfn_{(j,r]})\right)\zeta(2k)\\
&=0,\end{align*}
    where $\bfn=(n_1,\ldots,n_r)\in(\mathbb{Z}_{\geqslant0})^r$.
\end{thm}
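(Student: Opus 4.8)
The plan is to evaluate the contour integral $\oint_{C_R} \frac{\pi\cot(\pi s)\zeta_{(m_1,m_2)}(\bfk;s)}{s^q}\,ds$ by residues, using the fact (stated in Section~3) that this integral vanishes in the limit $R\to\infty$ because $\pi\cot(\pi s)$ is a Flajolet--Salvy kernel and $\zeta_{(m_1,m_2)}(\bfk;s)/s^q = O(s^{-2})$ away from the poles (here the hypothesis $m_2\le 0$ or $m_1\ge 0$ guarantees that $\zeta_{(m_1,m_2)}(\bfk;s)$ is actually a \emph{finite} sum, hence rational in $s$ and bounded, so the whole integrand is genuinely $O(s^{-q})$ on large circles avoiding the poles). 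By Lemma~\ref{lem-redisue-thm}, the sum of all residues of $\frac{\pi\cot(\pi s)\zeta_{(m_1,m_2)}(\bfk;s)}{s^q}$ equals $0$, and it remains to organize the poles into classes and compute each residue.

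First I would classify the poles. The poles of $\pi\cot(\pi s)$ are at the integers $n\in\Z$; the poles of $\zeta_{(m_1,m_2)}(\bfk;s)=\sum 1/\prod(n_i+s)^{k_i}$ are at $s=-n_i$ for $m_1<n_i<m_2$, i.e.\ at integers $n$ with $-m_2<n<-m_1$; and $1/s^q$ contributes a pole at $s=0$. I would split the integers into three (overlapping in principle, but the hypothesis $m_2\le0$ or $m_1\ge0$ keeps the regimes $\{n:-m_2<n<-m_1\}$ and $\{n\le -m_2\}\cup\{n\ge -m_1\}$ essentially disjoint from each other except at $0$): (i) integers $n$ with $n\le -m_2$ or $n\ge -m_1$ and $n\ne 0$, where only $\cot$ has a pole and $\zeta_{(m_1,m_2)}(\bfk;s)$ is holomorphic; apply Theorem~\ref{thm-taylor-expansion-mhzf} to get the Taylor expansion of $\zeta_{(m_1,m_2)}(\bfk;s)$ at $s=n$, combine with the simple pole $-2/(s-n)$ of $\pi\cot(\pi s)$ from \eqref{expansion-one-sine}, and read off the residue as $\zeta_{(n+m_1,n+m_2)}(\bfk)/n^q$ (the higher-order terms $\zeta(2k)(s-n)^{2k}$ with $k\ge1$ together with the expansion of $1/s^q = 1/(n+(s-n))^q$ contribute nothing at this order since we take the residue, the coefficient of $(s-n)^{-1}$, and $\zeta_{(n+m_1,n+m_2)}$ has nonnegative powers of $(s-n)$). (ii) The point $s=0$, where $1/s^q$ has an order-$q$ pole and $\zeta_{(m_1,m_2)}(\bfk;s)$ is holomorphic with the Taylor expansion of Proposition~\ref{prop}(6); pairing $\frac{-2}{s}\sum_k\zeta(2k)s^{2k}$ against $s^{-q}$ and the Taylor series gives, as the coefficient of $s^{-1}$, the term $-2\sum_{2k+m=q}\big(\sum_{|\bfn|=m}\prod_l\binom{-k_l}{n_l}\zeta_{(m_1,m_2)}(\bfk+\bfn)\big)\zeta(2k)$. (iii) The integers $n$ with $-m_2<n<-m_1$, where $\zeta_{(m_1,m_2)}(\bfk;s)$ has the Laurent expansion of Theorem~\ref{thm-Laurent-expansion-mhzf} with principal parts up to order $k_j$; here the residue computation is more delicate because $\pi\cot(\pi s) = \frac{-2}{s-n}\sum_k\zeta(2k)(s-n)^{2k}$ now multiplies both the holomorphic part $\sum a_m(s-n)^m$ (giving, as in (i), the contribution $\sum_j \zeta_{(n+m_1,0)}(\bfk_{[1,j]})\zeta_{(0,n+m_2)}(\bfk_{(j,r]})/n^q$ — only $a_0$ survives after taking the residue) and the principal parts $\sum_j\sum_m b_{m,j}(s-n)^{m-k_j}$ (which, multiplied by the even series $\zeta(2k)(s-n)^{2k}$ and by the expansion $1/s^q = \sum_{n_j}\binom{-q}{n_j}n^{-q-n_j}(s-n)^{n_j}$, produce the $\zeta(2k)$ terms in the last displayed line, with $2k+m=k_j$ selecting the residue).

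The main obstacle I expect is bookkeeping in regime (iii): correctly expanding $1/s^q$ around $s=n\ne 0$ as $n^{-q}\sum_{n_j\ge0}\binom{-q}{n_j}(s-n)^{n_j}/n^{n_j}$, matching it against the Laurent tail $b_{m,j}(s-n)^{m-k_j}$ and the even part $\zeta(2k)(s-n)^{2k}$ of the kernel, and isolating the total coefficient of $(s-n)^{-1}$ so that the index constraint becomes exactly $2k+m=k_j$ with $|\bfn|=m$ and the product of binomials $\binom{-q}{n_j}\prod_{l\ne j}\binom{-k_l}{n_l}$ as in the statement. One must be careful that $b_{m,j}$ as defined already contains $\prod_{l\ne j}\binom{-k_l}{n_l}$ and the $\bfn$-sum with the $n_j$-coordinate excluded, so the $\binom{-q}{n_j}$ and the extra $n_j$-summation come \emph{only} from expanding $1/s^q$; keeping the two sources of powers of $(s-n)$ separate is where sign and index errors are most likely. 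Once all four groups of residues are assembled and set equal to $0$ via Lemma~\ref{lem-redisue-thm}, the claimed identity follows; I would finally double-check the degenerate sub-cases ($r=1$, or $\{n:-m_2<n<-m_1\}$ empty when $-m_2\ge -m_1$, i.e.\ when one endpoint is infinite only on the convergent side) to confirm the stated formula specializes correctly.
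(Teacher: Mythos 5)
Your proposal follows essentially the same route as the paper: the vanishing contour integral of $\pi\cot(\pi s)\zeta_{(m_1,m_2)}(\bfk;s)/s^q$, Lemma \ref{lem-redisue-thm}, the expansion \eqref{expansion-one-sine}, Theorem \ref{thm-taylor-expansion-mhzf} at the integers $n\leqslant -m_2$ or $n\geqslant -m_1$ (with $n=0$ treated via the order-$q$ pole), and Theorem \ref{thm-Laurent-expansion-mhzf} at $-m_2<n<-m_1$, exactly as in the paper's case-by-case residue computation. The only slight inaccuracy is your parenthetical claim that the hypothesis makes $\zeta_{(m_1,m_2)}(\bfk;s)$ a finite sum (it need not be when $m_1=-\infty$ or $m_2=+\infty$), but this does not affect the argument, which otherwise matches the paper's proof.
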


\begin{proof}
We consider the following contour integral
\begin{align*}
\lim_{R\rightarrow \infty}\oint_{C_R} F_q(\bfk;s)\de s:=\lim_{R\rightarrow \infty}\oint_{C_R} \frac{\pi \cot(\pi s)\zeta_{(m_1,m_2)}(\bfk;s)}{s^q}\de s=0.
\end{align*}
Clearly, the function $F_q(\bfk;s)$ has only singularities are poles at the integers. It's easy to see that
\begin{align*}
    \frac{1}{s^q}=\sum_{k=0}^{\infty}\binom{-q}{k}\frac{(s-n)^k}{n^{q+k}}\quad (n\in\mathbb{Z}\backslash\{0\}).
\end{align*}
Applying Theorems  \ref{thm-taylor-expansion-mhzf} and \ref{thm-Laurent-expansion-mhzf} along with \eqref{expansion-one-sine}, we can derive the residue values for the following cases through a case-by-case discussion.

\textbf{Case 1}. If $n\leqslant-m_2$ or $n\geqslant-m_1$, then
\begin{itemize}
    \item[(1)] If $n\neq0$, we have
    \begin{align}
{\rm Res}[F_q,s=n]&=\lim_{s\rightarrow n} (s-n) \frac{\pi \cot(\pi s)\zeta_{(m_1,m_2)}(\bfk;s)}{s^q} \nonumber\\
&=\zeta_{(n+m_1,n+m_2)}(\bfk)\cdot\frac{1}{n^q}.
\end{align}

\item[(2)] If $n=0$, we have
\begin{align}
&{\rm Res}[F_q,s=0]=\frac1{q!}\lim_{s\rightarrow 0} \frac{\de^q}{\de s^q}\left\{s^{q+1}\frac{\pi \cot(\pi s)\zeta_{(m_1,m_2)}(\bfk;s)}{s^q}\right\}\nonumber\\
&=-2\sum_{2k+m=q}\left(\sum_{|\bfn|=m} \prod\limits_{l=1}^r \binom{-k_l}{n_l} \zeta_{(m_1,m_2)}(\bfk+\bfn)\right)\zeta(2k).
\end{align}

\textbf{Cases 2}. If $-m_2<n<-m_1$, then we have
\begin{align*}
&{\rm Res}[F_q,s=n]\\
&=\sum_{j=0}^r\frac{\zeta_{(n+m_1,0)}(\bfk_{[1,j]})\zeta_{(0,n+m_2)}(\bfk_{(j,r]})}{n^q}\\
&\quad-2 \sum_{j=1}^r \sum_{2k+m= k_j} \left(\sum_{|\bfn|=m}  \binom{-q}{n_j}\prod_{l\neq j}\binom{-k_l}{n_l}\frac{\zeta_{(n+m_1,0)}(\bfk_{[1,j)}+\bfn_{[1,j)})}{n^{q+n_j}}
\atop\times\zeta_{(0.n+m_2)}(\bfk_{(j,r]}+\bfn_{(j,r]})\right)\zeta(2k).
\end{align*}

\end{itemize}

By Lemma \ref{lem-redisue-thm}, we know that
\begin{align*}
    &\sum_{n\leqslant-m_2,n\neq0}{\rm Res}[F_q,s=n]+\sum_{-m_2<n<-m_1}{\rm Res}[F_q,s=n]\\
    &+{\rm Res}[F_q,s=0]+\sum_{n\geqslant-m_1,n\neq0}{\rm Res}[F_q,s=n]=0.
\end{align*}

Finally, combining these three contributions yields the statement of Theorem \ref{parity1}.
\end{proof}

\begin{thm}\label{parity2}
    If $m_1<0<m_2$, then we have
    \begin{align*}
        &\sum_{n\leqslant-m_2}\frac{\zeta_{(n+m_1,n+m_2)}(\bfk)}{n^q}+\sum_{n\geqslant-m_1}\frac{\zeta_{(n+m_1,n+m_2)}(\bfk)}{n^q}\\
    &-2\sum_{2k+m=q}\left(\sum_{|\bfn|=m}\prod_{l=1}^r\binom{-k_l}{n_l}\sum_{j=0}^r\zeta_{(m_1,0)}(\bfk_{[1,j]}+\bfn_{[1,j]})\zeta_{(0,m_2)}(\bfk_{(j,r]}+\bfn_{(j,r]})\right)\zeta(2k)\\
&-2\sum_{j=1}^r\sum_{2k+m=q+k_j}\left(\sum_{|\bfn|-n_j=m}\prod_{l\neq j}\binom{-k_l}{n_l}\zeta_{(n+m_1,0)}(\bfk_{[1,j)}+\bfn_{[1,j)})\zeta_{(0,n+m_2)}(\bfk_{(j,r]}+\bfn_{(j,r]})\right)\zeta(2k)\\
        &+\sum_{j=0}^r\sum_{-m_2<n<-m_1,n\neq0}\frac{\zeta_{(n+m_1,0)}(\bfk_{[1,j]})\zeta_{(0,n+m_2)}(\bfk_{(j,r]})}{n^q}\\
&-2 \sum_{-m_2<n<-m_1,n\neq0}\sum_{j=1}^r \sum_{2k+m= k_j} \left(\sum_{|\bfn|=m}  \binom{-q}{n_j}\prod_{l\neq j}\binom{-k_l}{n_l}\frac{\zeta_{(n+m_1,0)}(\bfk_{[1,j)}+\bfn_{[1,j)})}{n^{q+n_j}}
\atop\times\zeta_{(0,n+m_2)}(\bfk_{(j,r]}+\bfn_{(j,r]})\right)\zeta(2k)\\
&=0,
    \end{align*}
    where $\bfn=(n_1,\ldots,n_r)\in(\mathbb{Z}_{\geqslant0})^r$ .
\end{thm}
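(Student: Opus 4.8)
The plan is to mimic the proof of Theorem \ref{parity1} almost verbatim, the only structural difference being that when $m_1<0<m_2$ the origin $n=0$ lies in the ``inner'' range $-m_2<n<-m_1$ where $\zeta_{(m_1,m_2)}(\bfk;s)$ has a pole (coming from the factor $\zeta_{(m_1,0)}\cdots\zeta_{(0,m_2)}$ decomposition via Proposition \ref{prop}(2)), so the residue at $s=0$ must be computed using the Laurent expansion of Theorem \ref{thm-Laurent-expansion-mhzf} rather than the Taylor expansion. First I would set up the same contour integral
\begin{align*}
\lim_{R\to\infty}\oint_{C_R}\frac{\pi\cot(\pi s)\,\zeta_{(m_1,m_2)}(\bfk;s)}{s^q}\,\de s=0,
\end{align*}
noting that the integrand has poles only at the integers, and that at $s=0$ the factor $\pi\cot(\pi s)$ contributes a simple pole while $\zeta_{(m_1,m_2)}(\bfk;s)$ itself now contributes poles of orders $k_1,\dots,k_r$ by Theorem \ref{thm-Laurent-expansion-mhzf}.

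Next I would split the set of poles into three disjoint pieces exactly as in Theorem \ref{parity1}: the ``outer'' integers $n\leqslant-m_2$ and $n\geqslant-m_1$, and the ``inner'' integers $-m_2<n<-m_1$. Since $0$ is now an inner integer, I would further split the inner sum into $n\neq 0$ and $n=0$. For the outer integers and for the nonzero inner integers, the residue computations are identical to Cases 1(1) and 2 in the proof of Theorem \ref{parity1} (using $1/s^q=\sum_k\binom{-q}{k}(s-n)^k/n^{q+k}$ and the expansion \eqref{expansion-one-sine} of $\pi\cot(\pi s)$ at $s=n\neq 0$); they produce the first, fifth, and sixth displayed lines of the claimed identity, where in the outer sums I no longer need the restriction $n\neq 0$. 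The new ingredient is the residue at $s=0$: here I substitute the Laurent expansion \eqref{Laurent-expansion-mhzf}, namely $\zeta_{(m_1,m_2)}(\bfk;s)=\sum_m a_m s^m+\sum_{j=1}^r\sum_m b_{m,j}s^{m-k_j}$ with $a_m,b_{m,j}$ as in Theorem \ref{thm-Laurent-expansion-mhzf} specialized to $n=0$, multiply by $\pi\cot(\pi s)/s^q = (-2/s)\sum_{k\geqslant0}\zeta(2k)s^{2k}\cdot s^{-q}$, and extract the coefficient of $s^{-1}$. The $a_m$-part of the product contributes $-2\sum_{2k+m=q}a_m\zeta(2k)$, which after unwinding $a_m=\sum_{|\bfn|=m}\prod_l\binom{-k_l}{n_l}\sum_{j=0}^r\zeta_{(m_1,0)}(\bfk_{[1,j]}+\bfn_{[1,j]})\zeta_{(0,m_2)}(\bfk_{(j,r]}+\bfn_{(j,r]})$ gives the second displayed line; the $b_{m,j}$-part contributes $-2\sum_{j=1}^r\sum_{2k+(m-k_j)=q-1+1}b_{m,j}\zeta(2k)$, i.e. $-2\sum_{j=1}^r\sum_{2k+m=q+k_j}b_{m,j}\zeta(2k)$, which is the third displayed line (with $n=0$ in the definition of $b_{m,j}$). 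Finally, I would invoke Lemma \ref{lem-redisue-thm} to assert that the sum of all these residues over all integer poles vanishes, and collect the six groups of terms into the stated identity.

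The main obstacle will be bookkeeping in the residue at $s=0$: correctly matching the negative powers $s^{m-k_j}$ coming from the Laurent tail against the simple pole $s^{-1}$ of $\pi\cot(\pi s)$ and the factor $s^{-q}$, so that the index condition comes out as $2k+m=q+k_j$ rather than something off by one; and making sure the ranges of $j$, $k$, $m$ and the multi-indices $\bfn$ (with $n_j$ omitted in the $b_{m,j}$ terms) are transcribed consistently with Theorem \ref{thm-Laurent-expansion-mhzf}. A secondary point to check is that when $m_1<0<m_2$ every integer in $[\,-m_2+1,\,-m_1-1\,]$ (in particular $0$) genuinely lies in the inner regime so that the decomposition of Proposition \ref{prop}(2) at $n=0$ applies, and that the outer sums now legitimately include $n=0$'s absence is moot because $0$ is not outer. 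Everything else is a routine transcription of the argument already given for Theorem \ref{parity1}, so I would keep the write-up terse and refer back to that proof for the identical pieces.
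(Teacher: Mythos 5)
Your proposal matches the paper's own proof: the same contour integral, the same splitting of integer poles into the outer range and the inner range $-m_2<n<-m_1$, with the only new point being the residue at $s=0$ computed from the Laurent expansion of Theorem \ref{thm-Laurent-expansion-mhzf} (at $n=0$) against $\pi\cot(\pi s)/s^q$, yielding exactly the $2k+m=q$ and $2k+m=q+k_j$ terms, and then Lemma \ref{lem-redisue-thm} to sum the residues to zero. This is essentially the same argument as in the paper, with the index bookkeeping handled correctly.
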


\begin{proof}
We consider the following contour integral
\begin{align*}
\lim_{R\rightarrow \infty}\oint_{C_R} F_q(\bfk;s)\de s:=\lim_{R\rightarrow \infty}\oint_{C_R} \frac{\pi \cot(\pi s)\zeta_{(m_1,m_2)}(\bfk;s)}{s^q}\de s=0.
\end{align*}
Clearly, the function $F_q(\bfk;s)$ has only singularities are poles at the integers.

\textbf{Case 1}. If $n\leqslant-m_2$ or $n\geqslant-m_1$, then we have
\begin{align}
{\rm Res}[F_q,s=n]&=\lim_{s\rightarrow n} (s-n) \frac{\pi \cot(\pi s)\zeta_{(m_1,m_2)}(\bfk;s)}{s^q} \nonumber\\
&=\zeta_{(n+m_1,n+m_2)}(\bfk)\cdot\frac{1}{n^q}.
\end{align}

\textbf{Cases 2}. If $-m_2<n<-m_1$, then
\begin{itemize}
    \item[(1)] If $n\neq0$, we have
    \begin{align*}
&{\rm Res}[F_q,s=n]\\
&=\sum_{j=0}^r\frac{\zeta_{(n+m_1,0)}(\bfk_{[1,j]})\zeta_{(0,n+m_2)}(\bfk_{(j,r]})}{n^q}\\
&\quad-2 \sum_{j=1}^r \sum_{2k+m= k_j} \left(\sum_{|\bfn|=m}  \binom{-q}{n_j}\prod_{l\neq j}\binom{-k_l}{n_l}\frac{\zeta_{(n+m_1,0)}(\bfk_{[1,j)}+\bfn_{[1,j)})}{n^{q+n_j}}
\atop\times\zeta_{(0,n+m_2)}(\bfk_{(j,r]}+\bfn_{(j,r]})\right)\zeta(2k).
\end{align*}

\item[(2)] If $n=0$, we have
\begin{align*}
&{\rm Res}[F_q,s=0]\\
&=-2\sum_{2k+m=q}\left(\sum_{|\bfn|=m}\prod_{l=1}^r\binom{-k_l}{n_l}\sum_{j=0}^r\zeta_{(m_1,0)}(\bfk_{[1,j]}+\bfn_{[1,j]})\zeta_{(0,m_2)}(\bfk_{(j,r]}+\bfn_{(j,r]})\right)\zeta(2k)\\
&-2\sum_{j=1}^r\sum_{2k+m=q+k_j}\left(\sum_{|\bfn|-n_j=m}\prod_{l\neq j}\binom{-k_l}{n_l}\zeta_{(n+m_1,0)}(\bfk_{[1,j)}+\bfn_{[1,j)})\zeta_{(0,n+m_2)}(\bfk_{(j,r]}+\bfn_{(j,r]})\right)\zeta(2k).
\end{align*}

\end{itemize}

By Lemma \ref{lem-redisue-thm}, we know that
\begin{align*}
    &\sum_{n\leqslant-m_2}{\rm Res}[F_q,s=n]+\sum_{-m_2<n<-m_1,n\neq0}{\rm Res}[F_q,s=n]\\
    &+{\rm Res}[F_q,s=0]+\sum_{n\geqslant-m_1}{\rm Res}[F_q,s=n]=0.
\end{align*}

Finally, combining these two cases yields the statement of Theorem \ref{parity2}.
\end{proof}

\begin{cor}

\label{thm-residue-contour-one} Let $M\gg1$be a fixed integer, for $q\in\Z_{>1}$ and $\bfk=(k_1,\ldots,k_r)$ be a positive multi-index, we have
\begin{align*}
    &\sum_{j=0}^r(-1)^j\sum_{n=1}^{+\infty}\frac{\zeta^{\star}_{(0,n]}(k_j,\ldots,k_1)\zeta_{(0,n+M)}(k_{j+1},\cdots,k_r)}{n^q}\\
    &-2\sum_{2k+m=q}\left(\sum_{|\bfn|=m} \prod\limits_{l=1}^r \binom{-k_l}{n_l} \zeta_{(0,M)}(k_1+n_1,\ldots,k_r+n_r)\right)\zeta(2k)\\
    &+\sum_{-M<n<0}\sum_{j=0}^r\frac{\zeta_{(n,0)}(k_1,\ldots,k_j)}{n^q}\zeta_{(0,n+M)}(k_{j+1},k_{j+2},\ldots,k_r)\\
&\quad-2\sum_{-M<n<0} \sum_{j=1}^r \sum_{2k+m= k_j} \left(\sum_{|\bfn|=m}  \binom{-q}{n_j}\prod_{l\neq j}\binom{-k_l}{n_l}\frac{\zeta_{(n,0)}(k_1+n_1,\ldots,k_{j-1}+n_{j-1})}{n^{q+n_j}}
\atop\times\zeta_{(0,n+M)}(k_{j+1}+n_{j+1},\ldots,k_r+n_r)\right)\zeta(2k)\\
&+\sum_{n\leqslant-M}\frac{\zeta_{(n,n+M)}(k_1,\ldots,k_r)}{n^q}=0,
\end{align*}
where $\bfn=(n_1,\ldots,n_r)\in(\mathbb{Z}_{\geqslant0})^r$ .
\end{cor}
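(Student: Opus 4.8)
The plan is to read this corollary off from Theorem \ref{parity1} by a suitable specialization of $(m_1,m_2)$, followed by one application of the truncation identity in Proposition \ref{prop}(5).

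First I would apply Theorem \ref{parity1} with $m_1 = 0$ and $m_2 = M$. Since $m_1 = 0 \geq 0$, the hypothesis ``$m_2 \leq 0$ or $m_1 \geq 0$'' is met (any $M \geq 1$ works, so $M \gg 1$ is more than enough), and the theorem produces an identity among finite multiple Hurwitz zeta values. I would then carry out the trivial substitutions $n + m_1 = n$ and $n + m_2 = n + M$ and rewrite the three index ranges: $\{n \geq -m_1,\ n \neq 0\}$ becomes $\{n \geq 1\}$, $\{n \leq -m_2,\ n \neq 0\}$ becomes $\{n \leq -M\}$ (the condition $n \neq 0$ being automatic there), and $\{-m_2 < n < -m_1\}$ becomes $\{-M < n < 0\}$. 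Unwinding the segment notation $\bfk_{[1,j]} = (k_1,\dots,k_j)$, $\bfk_{[1,j)} = (k_1,\dots,k_{j-1})$, $\bfk_{(j,r]} = (k_{j+1},\dots,k_r)$ (and likewise for $\bfn$), one sees that four of the five groups of terms --- the residue at $n=0$, the ``diagonal'' sum $\sum_{-M<n<0}\sum_{j} n^{-q}\zeta_{(n,0)}(\cdots)\zeta_{(0,n+M)}(\cdots)$, its binomial correction sum, and the tail $\sum_{n \leq -M} n^{-q}\zeta_{(n,n+M)}(\bfk)$ --- already coincide with the corresponding terms of the corollary. Only the sum $\sum_{n \geq 1} n^{-q}\zeta_{(n,n+M)}(\bfk)$ remains to be brought into the stated shape.

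To do this I would apply Proposition \ref{prop}(5) at $s = 0$: since $0 < n < n+M$ for every $n \geq 1$, it gives
\[
\zeta_{(n,n+M)}(\bfk) = \sum_{j=0}^{r} (-1)^j\, \zeta^{\star}_{(0,n]}(k_j,\dots,k_1)\, \zeta_{(0,n+M)}(k_{j+1},\dots,k_r).
\]
Dividing by $n^q$, summing over $n \geq 1$, and interchanging the finite sum over $j$ with the sum over $n$ recovers precisely the first line of the corollary. Assembling the five groups of terms then yields the asserted identity.

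The argument has no genuine obstacle; it is a substitution into Theorem \ref{parity1} plus one invocation of Proposition \ref{prop}(5). The single point deserving a word of justification is the interchange of the $j$- and $n$-summations above. This is valid because $q \geq 2$ makes the double series absolutely convergent: $\zeta^{\star}_{(0,n]}(k_j,\dots,k_1) = O\big((\log n)^{r}\big)$ and $\zeta_{(0,n+M)}(k_{j+1},\dots,k_r) = O(\log n)$ (it is even bounded unless the last entry equals $1$), so each of the finitely many series $\sum_{n\geq 1} n^{-q}\,\zeta^{\star}_{(0,n]}(\cdots)\,\zeta_{(0,n+M)}(\cdots)$ converges absolutely. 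The remainder is careful bookkeeping with the truncated ranges and the segment notation.
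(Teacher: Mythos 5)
Your proposal is correct and follows essentially the same route as the paper: specialize Theorem \ref{parity1} to $m_1=0$, $m_2=M$ and rewrite the terms with $n\geq 1$ via the truncation identity of Proposition \ref{prop}(5). (Your side estimate $\zeta_{(0,n+M)}(k_{j+1},\dots,k_r)=O(\log n)$ should more precisely be $O((\log(n+M))^{r-j})$ in the all-ones case, but this polylogarithmic bound still gives the absolute convergence you need, so nothing is affected.)
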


\begin{proof}
    Let $m_1=0,m_2=M$, by using Theorem \ref{parity1} and truncation formula in Proposition \ref{prop}
    \begin{align*}
        \zeta_{(m_1,m_2)}(k_1,\ldots,k_r;s)=\sum_{j=0}^{r} (-1)^j \zeta^\star_{(0,m_1]}(k_{j},\ldots,k_1;s) \zeta_{(0,m_2)}(k_{j+1},k_{j+2},\ldots,k_r;s),
    \end{align*}
    we obtain the desired conclusion.
\end{proof}

In order to provide the stuffle regularization, we first need to establish some preliminary lemmas.

\begin{lem}\label{lemma-1}
    Let $\bfk=(k_1,\ldots,k_r)\in(\mathbb{Z}_{>0})^r$ be a positive multi-index, then for all $n\in\mathbb{Z}_{>0}$, we have
    $$\zeta^{\star}_{(0,n]}(\bfk)<2^{r-1}(1+\ln(n))^{r}.$$
\end{lem}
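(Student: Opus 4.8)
The plan is to bound the star-value $\zeta^{\star}_{(0,n]}(\bfk)$ by induction on the depth $r$, using the most pessimistic exponents (all $k_l = 1$) and the standard estimate $1 + 1/2 + \cdots + 1/n \leqslant 1 + \ln(n)$ for the harmonic numbers. Since every term $1/\prod (n_i)^{k_i}$ with $k_i \geqslant 1$ is at most $1/\prod n_i$, we have
\begin{align*}
\zeta^{\star}_{(0,n]}(\bfk) = \sum_{0 < n_1 \leqslant \cdots \leqslant n_r \leqslant n} \frac{1}{n_1^{k_1}\cdots n_r^{k_r}} \leqslant \sum_{0 < n_1 \leqslant \cdots \leqslant n_r \leqslant n} \frac{1}{n_1 \cdots n_r} = \zeta^{\star}_{(0,n]}(\{1\}_r),
\end{align*}
so it suffices to prove the bound for the all-ones index. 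For $r = 1$ this is exactly $\sum_{j=1}^n 1/j \leqslant 1 + \ln(n)$, which establishes the base case $\zeta^{\star}_{(0,n]}(1) \leqslant (1+\ln(n))^1 = 2^{0}(1+\ln(n))^1$.

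For the inductive step I would peel off the outermost (largest) summation variable $n_r$. Writing $H_m := \zeta^{\star}_{(0,m]}(\{1\}_{r-1})$, we have
\begin{align*}
\zeta^{\star}_{(0,n]}(\{1\}_r) = \sum_{m=1}^{n} \frac{1}{m} \sum_{0 < n_1 \leqslant \cdots \leqslant n_{r-1} \leqslant m} \frac{1}{n_1 \cdots n_{r-1}} = \sum_{m=1}^{n} \frac{H_m}{m}.
\end{align*}
By the induction hypothesis $H_m \leqslant 2^{r-2}(1+\ln(m))^{r-1} \leqslant 2^{r-2}(1+\ln(n))^{r-1}$ for $m \leqslant n$; factoring this constant bound out of the sum and using $\sum_{m=1}^n 1/m \leqslant 1 + \ln(n)$ once more gives
\begin{align*}
\zeta^{\star}_{(0,n]}(\{1\}_r) \leqslant 2^{r-2}(1+\ln(n))^{r-1} \cdot (1+\ln(n)) = 2^{r-2}(1+\ln(n))^{r}.
\end{align*}
Since $2^{r-2} \leqslant 2^{r-1}$, this is even slightly stronger than the claimed inequality, which completes the induction.

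The only subtlety — and the place where one must be a little careful — is that the crude step $H_m \leqslant 2^{r-2}(1+\ln(n))^{r-1}$ discards the dependence on $m$ and pulls a constant out of the $m$-sum; this is exactly what forces a clean geometric factor $2^{r-1}$ rather than something depending on $n$. If one instead wanted a sharper constant one could keep $H_m \leqslant 2^{r-2}(1+\ln(m))^{r-1}$ inside and estimate $\sum_{m=1}^n (1+\ln(m))^{r-1}/m$ by comparison with $\int_1^n (1+\ln x)^{r-1}\,\frac{dx}{x} = \frac{1}{r}\big((1+\ln n)^r - 1\big)$, but since the statement only asks for the weaker bound the simpler argument suffices and no real obstacle arises.
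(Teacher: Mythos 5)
Your proof is correct and in fact yields the slightly stronger bound $2^{r-2}(1+\ln n)^{r}$, but it takes a genuinely different route from the paper. You reduce to the all-ones index and then induct on the depth, peeling off the outermost variable $n_r=m$ and bounding the inner depth-$(r-1)$ star value by the induction hypothesis before summing $\sum_{m=1}^{n}1/m\leqslant 1+\ln(n)$ once more. The paper argues directly, with no induction: it splits the weakly increasing sum $\zeta^{\star}_{(0,n]}(\{1\}_r)$ according to which of the $r-1$ consecutive inequalities are equalities, bounds it by $\sum_{j=0}^{r-1}\binom{r-1}{j}\zeta_{(0,n]}(\{1\}_{j+1})$ with strictly increasing indices, then uses the stuffle fact $y_1^{*(j+1)}=(j+1)!\,y_1^{j+1}+\cdots$ to get $\zeta_{(0,n]}(\{1\}_{j+1})\leqslant(\zeta_{(0,n]}(1))^{j+1}/(j+1)!$, and finally sums the binomial coefficients to produce the factor $2^{r-1}$ and applies $\zeta_{(0,n]}(1)\leqslant 1+\ln(n)$. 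Your induction is more elementary (no combinatorial decomposition or stuffle identity needed) and gives a better constant; indeed, run without carrying the powers of $2$ it proves $\zeta^{\star}_{(0,n]}(\bfk)\leqslant(1+\ln(n))^{r}$, which also follows in one line from $\zeta^{\star}_{(0,n]}(\{1\}_r)\leqslant(\zeta_{(0,n]}(1))^{r}$, whereas the paper's decomposition keeps the strict truncated values $\zeta_{(0,n]}(\{1\}_{j+1})$ explicit at the cost of the extra $2^{r-1}$. One shared cosmetic point: both arguments really produce ``$\leqslant$'', and at $n=1$, $r=1$ the strict inequality of the statement fails since $\zeta^{\star}_{(0,1]}(k_1)=1=2^{0}(1+\ln 1)^{1}$; for $n\geqslant 2$ strictness is restored because $\sum_{m=1}^{n}1/m<1+\ln(n)$, and for $r\geqslant 2$ the factor $2^{r-1}$ (or the case $n\geqslant 2$) gives the needed slack.
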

\begin{proof} By direct calculations, we have
    \begin{align*}
        \zeta^{\star}_{(0,n]}(\bfk)&\leqslant\zeta^{\star}_{(0,n]}(\{1\}_r)\\
        &=\sum_{0<n_1\leqslant\cdots\leqslant n_r\leqslant n}\frac{1}{n_1\cdots n_r}\\
        &=\sum_{0<n_1<\cdots<n_r\leqslant n}\frac{1}{n_1\cdots n_r}\\
        &\quad+\sum_{0<n_1\leqslant<n_2<\cdots< n_r\leqslant n}\frac{1}{n_1\cdots n_r}+\cdots+\sum_{0<n_1<\cdots<n_{r-1}\leqslant n_r\leqslant n}\frac{1}{n_1\cdots n_r}\\
        &\quad+\sum_{0<n_1=\cdots=n_r\leqslant n}\frac{1}{n_1\cdots n_r}\\
        &<\sum_{j=0}^{r-1}\binom{r-1}{r-1-j}\zeta_{(0,n]}(\{1\}_{j+1})\\
        &<\sum_{j=0}^{r-1}\binom{r-1}{r-1-j}\frac{(\zeta_{(0,n]}(1))^{j+1}}{(j+1)!}\quad(\text{since}\ y_1^{*{j+1}}=(j+1)!\cdot y_1^j+\cdots)\\
        &<\sum_{j=0}^{r-1}\binom{r-1}{r-1-j}(\zeta_{(0,n]}(1))^{r}\\
        &<2^{r-1}(1+\ln(n))^{r}.
    \end{align*}
This completes the proof of the lemma.
\end{proof}

\begin{lem}\label{lem-admisslble}
    Let $\bfk=(k_1,\ldots,k_r)$ be an admissible multi-index . Let $M>>1$ be a fixed integer and $n\in\mathbb{Z}$, if $M+n>0$ then we have
\begin{align*}
    \left|\zeta_{(0,n+M)}(k_{1},\ldots,k_r)-\zeta(k_{1},\ldots,k_r)\right|<2r\frac{(1+\ln(n+M))^{r-1}}{n+M}.
\end{align*}
\end{lem}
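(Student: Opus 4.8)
The plan is to estimate the difference $\zeta_{(0,n+M)}(\bfk)-\zeta(\bfk)$ by writing it as a tail sum and bounding that tail using the crude star-value estimate of Lemma \ref{lemma-1}. Write $N:=n+M$, which by hypothesis is a positive integer, and recall that $\zeta(\bfk)=\zeta_{(0,+\infty)}(\bfk)$ converges since $\bfk$ is admissible ($k_r>1$). Then
\begin{align*}
    \zeta(\bfk)-\zeta_{(0,N)}(\bfk)=\sum_{0<n_1<\cdots<n_r,\ n_r\geqslant N}\frac{1}{n_1^{k_1}\cdots n_r^{k_r}},
\end{align*}
so the quantity to be bounded is this single ``outermost index is large'' tail.

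First I would split the tail according to the value of the largest index $n_r=m$ for $m\geqslant N$: the inner sum over $0<n_1<\cdots<n_{r-1}<m$ of $1/(n_1^{k_1}\cdots n_{r-1}^{k_{r-1}})$ is at most $\zeta^{\star}_{(0,m]}(k_1,\ldots,k_{r-1})$ (dropping strict inequalities and the admissibility of the truncated index only makes the star-value larger), and by Lemma \ref{lemma-1} this is bounded by $2^{r-2}(1+\ln m)^{r-1}$. Hence
\begin{align*}
    \left|\zeta(\bfk)-\zeta_{(0,N)}(\bfk)\right|\leqslant\sum_{m\geqslant N}\frac{2^{r-2}(1+\ln m)^{r-1}}{m^{k_r}}\leqslant 2^{r-2}\sum_{m\geqslant N}\frac{(1+\ln m)^{r-1}}{m^2},
\end{align*}
using $k_r\geqslant 2$. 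The remaining task is purely analytic: show $\sum_{m\geqslant N}(1+\ln m)^{r-1}/m^2$ is at most (a constant times) $(1+\ln N)^{r-1}/N$, which follows by comparing the sum with the integral $\int_{N-1}^\infty (1+\ln x)^{r-1}x^{-2}\,\de x$ and integrating by parts, or more cheaply by noting $(1+\ln m)^{r-1}/m^2\leqslant (1+\ln m)^{r-1}/m \cdot (1/m)$ and summing a telescoping-type bound; either way one absorbs the constants to reach the stated bound $2r(1+\ln N)^{r-1}/N$.

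The main obstacle is bookkeeping the constants so that the final bound comes out exactly as $2r(1+\ln(n+M))^{r-1}/(n+M)$ rather than with some larger implied constant; this is where the estimate $\sum_{m\geqslant N}m^{-2}<1/(N-1)$ (or an integral comparison giving $1/N$ plus a lower-order correction) must be married carefully with the $2^{r-2}$ coming from Lemma \ref{lemma-1} and the fact that $(1+\ln m)^{r-1}$ is slowly varying on $[N,\infty)$ so can be pulled out near its value at $m=N$ up to a harmless factor. A clean way to finish is: since $x\mapsto (1+\ln x)^{r-1}/x$ is eventually decreasing, for $M$ large enough $\sum_{m\geqslant N}(1+\ln m)^{r-1}/m^2\leqslant \int_{N-1}^{\infty}\frac{(1+\ln x)^{r-1}}{x^2}\de x$, and an integration by parts shows this integral equals $\frac{(1+\ln(N-1))^{r-1}}{N-1}+(r-1)\int_{N-1}^\infty\frac{(1+\ln x)^{r-2}}{x^2}\de x$, which by an easy induction on $r$ is $O\!\big((1+\ln N)^{r-1}/N\big)$ with an absolute constant; choosing $M\gg 1$ makes this constant at most $2r/2^{r-2}$, yielding the claim. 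I would present this last induction only briefly, since it is routine.
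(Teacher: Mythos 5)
Your reduction to the single tail $\sum_{n_r\geqslant N}$ and the bound of the inner sum by a truncated star value are fine, but the final constant bookkeeping — the very point you flag as the main obstacle — does not work as claimed, so the lemma as stated is not proved. After invoking Lemma \ref{lemma-1} you carry the factor $2^{r-2}$, and the integral comparison for $\sum_{m\geqslant N}(1+\ln m)^{r-1}/m^{2}$ can never produce a constant below $1$: the first term of your own integration by parts is already $\frac{(1+\ln(N-1))^{r-1}}{N-1}\approx\frac{(1+\ln N)^{r-1}}{N}$, and the sum itself is bounded below by essentially the same quantity. Hence the best your route yields is roughly $2^{r-2}\,(1+\ln N)^{r-1}/N$, whereas the target constant is $2r$; since $2^{r-2}>2r$ for every $r\geqslant 6$, the claim that ``choosing $M\gg1$ makes this constant at most $2r/2^{r-2}$'' is false — the deficiency sits in the $r$-dependent prefactor, not in lower-order terms that shrink with $M$. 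The gap is repairable within your framework: replace the crude Lemma \ref{lemma-1} estimate by the sharper $\zeta_{(0,m)}(k_1,\ldots,k_{r-1})\leqslant\zeta_{(0,m)}(\{1\}_{r-1})\leqslant\frac{(1+\ln m)^{r-1}}{(r-1)!}$ (or even just $(1+\ln m)^{r-1}$), after which the same integral comparison gives a constant well under $2r$ once $M$ is large relative to $r$; and for the paper's later applications any constant depending only on $\bfk$ would in fact suffice, so your argument proves a usable weaker statement even if not the stated one.

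For comparison, the paper avoids the integral comparison altogether: it decomposes $\zeta_{(0,+\infty)}(\bfk)$ at the cut point $n+M$ via Proposition \ref{prop}(2), so the difference becomes $\sum_{j=0}^{r-1}\zeta_{(0,n+M)}(k_1,\ldots,k_j)\,\zeta_{[n+M,+\infty)}(k_{j+1},\ldots,k_r)$, then bounds each prefix by $(\zeta_{(0,n+M)}(1))^{j}<(1+\ln(n+M))^{j}$ and each tail by $\zeta_{[n+M,+\infty)}(\{1\}_{r-j-1},2)<\frac{2}{n+M}$ (a one-line induction using $\sum_{m>n}m^{-2}<n^{-1}$). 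Summing the $r$ terms gives exactly the constant $2r$. If you want the stated inequality verbatim, that decomposition is the cleaner path; otherwise, fix the prefactor in your tail estimate as indicated.
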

\begin{proof}
By elementary calculations, we have
    \begin{align*}
    &\left|\zeta(k_{1},\ldots,k_r)-\zeta_{(0,n+M)}(k_{1},\ldots,k_r)\right|\\
        &=\left|\zeta_{(0,+\infty)}(k_{1},\ldots,k_r)-\zeta_{(0,n+M)}(k_{1},\ldots,k_r)\right|\\
        &=\left|\sum_{j=0}^r\zeta_{(0,n+M)}(k_{1},\ldots,k_j)\zeta_{[n+M,+\infty)}(k_{j+1},\ldots,k_r)-\zeta_{(n,n+M)}(k_{1},\ldots,k_r)\right|\\
        &=\sum_{j=0}^{r-1}\zeta_{(0,n+M)}(k_{1},\ldots,k_j)\zeta_{[n+M,+\infty)}(k_{j+1},\ldots,k_r)\\
        &\leqslant\sum_{j=0}^{r-1}\zeta_{(0,n+M)}(\{1\}_{j})\zeta_{[n+M,+\infty)}(\{1\}_{r-j-1},2)\\
        &<\sum_{j=0}^{r-1}(\zeta_{(0,n+M)}(1))^{j}\zeta_{[n+M,+\infty)}(\{1\}_{r-j-1},2)\\
        &< \sum_{j=0}^{r-1}(1+\ln(n+M))^j\cdot\frac{2}{n+M}\\
        &<2r\frac{(1+\ln(n+M))^{r-1}}{n+M}.
    \end{align*}
Thus, the desired result is obtained.
\end{proof}

\begin{lem}\label{lem-positive}
     Let $\bfk=(k_j,\ldots,k_1)$ be a positive multi-index, $s\in\mathbb{Z}_{\geqslant0}$ and $q\in\mathbb{Z}_{>1}$. Let $M>>1$ be a fixed integer, then we have
\begin{align*}
    &\left|\sum_{n=1}^{+\infty}\frac{\zeta^{\star}_{(0,n]}(k_j,\ldots,k_1)(\zeta_{(0,n+M)}(1))^s}{n^q}-\sum_{n=1}^{+\infty}\frac{\zeta^{\star}_{(0,n]}(k_j,\ldots,k_1)(\zeta_{(0,M)}(1))^s}{n^q}\right|\\
    &<c\cdot\frac{\ln^{j+s}(M)}{M},
\end{align*}
where the constant $c$ is independent on $M$.
\end{lem}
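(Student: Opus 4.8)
The plan is to rewrite the left-hand side as a single, absolutely convergent series of nonnegative terms and estimate it by splitting the range of summation at $n=M$. First, if $s=0$ the two series are identical and there is nothing to prove, so assume $s\geqslant1$. For fixed $M$, Lemma~\ref{lemma-1} gives $\zeta^{\star}_{(0,n]}(k_j,\ldots,k_1)\leqslant C_0(1+\ln n)^j$ with $C_0$ depending only on $j$, and since $q>1$ both series on the left converge absolutely; hence the quantity to be bounded equals
\[
\Sigma_M:=\sum_{n=1}^{\infty}\frac{\zeta^{\star}_{(0,n]}(k_j,\ldots,k_1)}{n^q}\Bigl[\bigl(\zeta_{(0,n+M)}(1)\bigr)^s-\bigl(\zeta_{(0,M)}(1)\bigr)^s\Bigr].
\]
Write $a_n:=\zeta_{(0,n+M)}(1)=\sum_{0<m<n+M}1/m$ and $b:=\zeta_{(0,M)}(1)=\sum_{0<m<M}1/m$, so that $0\leqslant b<a_n$ and $a_n-b=\sum_{m=M}^{n+M-1}1/m$, which yields the two complementary bounds $a_n-b\leqslant n/M$ and $a_n-b\leqslant a_n<1+\ln(n+M)$.

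Next I would use the telescoping identity $a_n^s-b^s=(a_n-b)\sum_{i=0}^{s-1}a_n^i b^{s-1-i}$ together with $b<a_n$ to conclude that every summand of $\Sigma_M$ is nonnegative and that
\[
0\leqslant a_n^s-b^s\leqslant s(a_n-b)\,a_n^{s-1}\leqslant s(a_n-b)\bigl(1+\ln(n+M)\bigr)^{s-1}.
\]
Combined with the bound on $\zeta^{\star}_{(0,n]}$ from Lemma~\ref{lemma-1}, each term of $\Sigma_M$ is at most $C_1(1+\ln n)^j (a_n-b)\bigl(1+\ln(n+M)\bigr)^{s-1}/n^q$, with $C_1=C_1(j,s)$.

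Now split $\Sigma_M=\Sigma_M'+\Sigma_M''$ according to $1\leqslant n\leqslant M$ and $n>M$. For $n\leqslant M$ use $a_n-b\leqslant n/M$ and $1+\ln(n+M)\leqslant 1+\ln 2+\ln M$, giving
\[
\Sigma_M'\leqslant \frac{C_2(1+\ln M)^{s-1}}{M}\sum_{n=1}^{M}\frac{(1+\ln n)^j}{n^{q-1}}.
\]
If $q\geqslant3$ the inner sum is $O(1)$, so $\Sigma_M'=O\!\left((\ln M)^{s-1}/M\right)$; if $q=2$, comparison with $\int_1^M(1+\ln x)^j\,\de x/x$ gives $\sum_{n=1}^{M}(1+\ln n)^j/n=O\!\left((1+\ln M)^{j+1}\right)$, hence $\Sigma_M'=O\!\left((\ln M)^{j+s}/M\right)$. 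For $n>M$ use instead $a_n-b<1+\ln(n+M)<1+\ln(2n)$, so
\[
\Sigma_M''\leqslant C_3\sum_{n>M}\frac{(1+\ln n)^j\bigl(1+\ln(2n)\bigr)^s}{n^q}=O\!\left(\frac{(\ln M)^{j+s}}{M^{q-1}}\right),
\]
the last estimate by comparison with $\int_M^\infty(\ln x)^{j+s}\,\de x/x^q$ (repeated integration by parts); since $q\geqslant2$ one has $M^{q-1}\geqslant M$. Adding the two parts gives $|\Sigma_M|\leqslant c\,\ln^{j+s}(M)/M$ with $c=c(j,s,q,\bfk)$ independent of $M$.

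The one delicate point is the regime $q=2$, $n\leqslant M$: there $\sum_{n\leqslant M}(1+\ln n)^j/n$ grows like $(\ln M)^{j+1}$, and this is exactly what forces the exponent $j+s$ in the final bound (when $q\geqslant3$ one gets the smaller exponent $s-1$, which is absorbed since $\ln M\geqslant1$). Everything else—the telescoping estimate for $a_n^s-b^s$, the two integral comparisons, and the case $s=0$—is routine; the only thing to watch is that every constant absorbed along the way depends on $j$, $s$, $q$ and the index $\bfk$ but never on $M$.
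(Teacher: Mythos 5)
Your proof is correct and follows essentially the same route as the paper: bound the difference of the $s$-th powers of the harmonic-type sums by (difference)$\times$(logarithmic factor), invoke Lemma \ref{lemma-1} for $\zeta^{\star}_{(0,n]}$, split the sum at $n=M$, use $a_n-b\leqslant n/M$ (the paper's $\ln(1+\tfrac{n}{M-1})\leqslant\tfrac{n}{M-1}$) on the initial range and an integral comparison with $\int_M^\infty \ln^{a}(x)x^{-q}\,\de x$ on the tail. The only cosmetic difference is that you use the factorization $a^s-b^s=(a-b)\sum_{i=0}^{s-1}a^ib^{s-1-i}$ where the paper expands $(b+(a-b))^s$ binomially, which changes nothing essential.
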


\begin{proof} Firstly, we have
    \begin{align*}
        |(\zeta_{(0,n+M)}(1))^s-(\zeta_{(0,M)}(1))^s|&=(\zeta_{(0,M)}(1)+\zeta_{[M,n+M)}(1))^s-(\zeta_{(0,M)}(1))^s\\
        &=\sum_{l=1}^{s}\binom{s}{l}(\zeta_{(0,M)}(1))^{s-l}(\zeta_{[M,n+M)}(1))^l\\
        &<\sum_{l=1}^{s}\binom{s}{l}(1+\ln(M))^{s-l}\ln^l\left(1+\frac{n}{M-1}\right)\\
        &<\sum_{l=1}^{s}\binom{s}{l}2^{s-l}\ln^{s-l}(M)\ln^l\left(1+\frac{n}{M-1}\right)
    \end{align*}
    by Lemma \ref{lemma-1}, we know that $\zeta^{\star}_{(0,n]}(k_j,\ldots,k_1)<2^{j-1}(1+\ln(n))^j$. Therefore, the sum to be estimated can be expressed as a linear combination of
$$\sum_{n=1}^{+\infty}\frac{\ln^j(n)\ln^l\left(1+\frac{n}{M-1}\right)}{n^q},$$
notice that $q>1$, we obtain
\begin{align*}
&\sum_{n=1}^{+\infty}\frac{\ln^j(n)\ln^l\left(1+\frac{n}{M-1}\right)}{n^q}\\
    &<\sum_{n=1}^{+\infty}\frac{\ln^j(n)\ln^l\left(1+\frac{n}{M-1}\right)}{n^2}\\
    &=\sum_{n=1}^{M}\frac{\ln^j(n)\ln^l\left(1+\frac{n}{M-1}\right)}{n^2}+\sum_{n=M+1}^{+\infty}\frac{\ln^j(n)\ln^l\left(1+\frac{n}{M-1}\right)}{n^2}\\
    &<\frac{1}{(M-1)^l}\sum_{n=1}^Mn^{l-2}\ln^j(n)+\sum_{n=M+1}^{+\infty}\frac{\ln^j(n)\ln^l(1+n)}{n^2}\\
    &<c_1\frac{\ln^j(M)}{M}+2^l\sum_{n=M+1}^{+\infty}\frac{\ln^{j+l}(n)}{n^2}\\
    &<c_1\frac{\ln^j(M)}{M}+2^l\int_{M}^{+\infty}\frac{\ln^{j+l}(x)}{x^2}dx\\
    &<c_1\frac{\ln^j(M)}{M}+2^lc_2\frac{\ln^{j+l}(M)}{M}\\
    &<c\frac{\ln^{j+l}(M)}{M}.
\end{align*}
Thus, the proof of the lemma is completed.
\end{proof}

\begin{thm}\label{thm-star}
 Let $\bfk$ be a positive multi-index and $q\in\mathbb{Z}_{>1}$. Let $M$ be a sufficiently large positive real number, then we have
 \begin{align*}
    &\left|\sum_{n=1}^{+\infty}\frac{\zeta^{\star}_{(0,n]}(k_j,\ldots,k_1)\zeta_{(0,n+M)}(k_{j+1},\ldots,k_r)}{n^q}-\sum_{n=1}^{+\infty}\frac{\zeta^{\star}_{(0,n]}(k_j,\ldots,k_1)\zeta_*^{T=\zeta_{(0,M)}(1)}(k_{j+1},\ldots,k_r)}{n^q}\right|\\
    &<c\frac{\ln^{r}(M)}{M},
\end{align*}
where the constant $c$ is independent on $M$.
\end{thm}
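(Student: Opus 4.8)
The plan is to bound the difference term by term in $n$, reducing everything to the two lemmas just proved. First I would recall the meaning of the stuffle-regularized value $\zeta_*^{T}(k_{j+1},\ldots,k_r)$: by the standard theory of double shuffle regularization (Ihara--Kaneko--Zagier), the map $\bfk\mapsto\zeta_*^{T}(\bfk)$ is the unique polynomial in $T$ with constant term the regularized MZV $\zeta_*(\bfk)$, obtained by writing $\zeta_{(0,N)}(k_1,\ldots,k_r)$ (the truncated MZV) as a polynomial in $\zeta_{(0,N)}(1)=H_N$ and convergent truncated MZVs, and then substituting $T$ for $H_N$. Concretely, there is a finite identity expressing $\zeta_{(0,N)}(k_{j+1},\ldots,k_r)$ as $\sum_{s=0}^{r-j}P_s(\text{admissible truncated MZVs})\,(H_N)^s$, where each $P_s$ is a $\Q$-linear combination of truncated MZVs of depth $\le r-j$ and weight $\le |\bfk_{(j,r]}|$; replacing $H_N$ by $T$ gives $\zeta_*^{T}$. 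I would state this as the input from the literature.

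Next I would split the difference into two parts according to this decomposition. Applying the polynomial identity with $N=n+M$ gives $\zeta_{(0,n+M)}(k_{j+1},\ldots,k_r)=\sum_s P_s^{(n+M)}(H_{n+M})^s$ with $P_s^{(n+M)}$ built from \emph{admissible} truncated MZVs, and applying it with $N=M$ gives the analogous expression for $\zeta_*^{T=H_M}$. So the difference inside the theorem is a finite $\Q$-linear combination (over $s$ and over the admissible sub-indices appearing in $P_s$) of expressions of the shape
\begin{align*}
&\sum_{n=1}^{\infty}\frac{\zeta^\star_{(0,n]}(k_j,\ldots,k_1)\,\zeta_{(0,n+M)}(\bfa)\,(H_{n+M})^s}{n^q}
-\sum_{n=1}^{\infty}\frac{\zeta^\star_{(0,n]}(k_j,\ldots,k_1)\,\zeta(\bfa)\,(H_M)^s}{n^q},
\end{align*}
where $\bfa$ runs over admissible multi-indices with $\dep(\bfa)+s\le r-j$. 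I would then telescope this through the intermediate term $\sum_n \zeta^\star_{(0,n]}(\cdots)\zeta(\bfa)(H_{n+M})^s/n^q$: the piece comparing $(H_{n+M})^s$ with $(H_M)^s$ (with the factor $\zeta(\bfa)$ now constant in $n$) is exactly controlled by Lemma \ref{lem-positive} with $\bfk$ there equal to $(k_j,\ldots,k_1)$, giving a bound $c\,\ln^{j+s}(M)/M\le c\,\ln^r(M)/M$; the piece comparing $\zeta_{(0,n+M)}(\bfa)$ with $\zeta(\bfa)$ (with $(H_{n+M})^s$ carried along) is controlled by Lemma \ref{lem-admisslble}, which gives $|\zeta_{(0,n+M)}(\bfa)-\zeta(\bfa)|<2\dep(\bfa)(1+\ln(n+M))^{\dep(\bfa)-1}/(n+M)$, multiplied by $\zeta^\star_{(0,n]}(k_j,\ldots,k_1)(H_{n+M})^s<2^{j-1}(1+\ln(n))^j(1+\ln(n+M))^s$ (Lemma \ref{lemma-1}).

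For that second piece I would bound $\sum_{n\ge1}\frac{(1+\ln n)^j(1+\ln(n+M))^{\dep(\bfa)-1+s}}{(n+M)\,n^q}$ by the same elementary technique used in the proof of Lemma \ref{lem-positive}: split at $n=M$, on $n\le M$ use $1/(n+M)\le 1/M$ and $\sum_{n\le M}(\ln n)^{j}/n^q<c_1$ (since $q>1$) to get $\lesssim \ln^{\dep(\bfa)-1+s}(M)/M$, and on $n>M$ use $1/(n+M)\le 1/n$ together with $\sum_{n>M}(\ln n)^{\text{(tot)}}/n^{q+1}<c_2\ln^{\text{(tot)}}(M)/M$ via comparison with $\int_M^\infty (\ln x)^{\text{(tot)}}/x^2\,dx$; in both ranges the total logarithmic exponent is at most $j+(\dep(\bfa)-1)+s\le j+(r-j-1)< r$, so each contribution is $O(\ln^{r}(M)/M)$. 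Summing the finitely many such contributions and absorbing all constants into a single $c$ (depending only on $\bfk$ and $q$, not on $M$) yields the claimed bound. The main obstacle, and the place to be careful, is purely bookkeeping: keeping track of the weights and depths of the admissible sub-indices $\bfa$ produced by the stuffle regularization so that the worst-case logarithmic power is genuinely $\le r$ and not larger, and making sure the telescoping is set up so that each of the two resulting pieces falls under exactly one of Lemmas \ref{lem-admisslble} and \ref{lem-positive}; there is no analytic difficulty beyond the elementary sums already handled in Lemma \ref{lem-positive}.
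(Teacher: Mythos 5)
Your proposal is correct and takes essentially the same route as the paper: the paper likewise invokes the stuffle isomorphism $(\Q\langle Y\rangle,*)\cong(\Q\langle Y\rangle^0,*)[y_1]$ to write the index as admissible words times powers of $y_1$, applies the resulting identity to the truncated sums at level $n+M$ and to the regularized value at $T=\zeta_{(0,M)}(1)$, and then telescopes and estimates each piece via Lemmas \ref{lemma-1}, \ref{lem-admisslble} and \ref{lem-positive}. The only immaterial difference is the order of the telescoping (the paper first replaces $(\zeta_{(0,n+M)}(1))^{s-i}$ by $(\zeta_{(0,M)}(1))^{s-i}$ and then $\zeta_{(0,n+M)}(w_i)$ by $\zeta(w_i)$, whereas you do it in the opposite order, carrying the harmonic-number power along in the second piece), which changes nothing in the bounds.
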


\begin{proof}
It is widely known that $(\Q\langle Y\rangle,*)\cong(\Q\langle Y\rangle^0,*)[y_1]$ (For a more detailed description, the reader is referred to Section 7 of Chapter 1 in \cite{BJ}). Hence, we have

    $$y_{k_{j+1}}y_{k_{j+2}}\cdots y_{k_r}=w_0*y_1^{*s}+w_1*y_1^{*(s-1)}+\cdots+w_{s-1}*y_1+w_s,$$
    where $w_s,w_{s-1},\cdots,w_1,w_0$ are all admissible words. We obtain
    \begin{align*}
        \zeta_{(0,n+M)}(k_{j+1},k_{j+2},\ldots,k_r)&=\zeta_{(0,n+M)}(w_0)\cdot(\zeta_{(0,n+M)}(1))^s+\zeta_{(0,n+M)}(w_1)\cdot(\zeta_{(0,n+M)}(1))^{s-1}\\
        &\quad+\cdots+\zeta_{(0,n+M)}(w_s)
    \end{align*}
    therefore, we only need to consider $\zeta_{(0,n+M)}(w_i)\cdot(\zeta_{(0,n+M)}(1))^{s-i},\ i=0,\cdots,s$. By Lemmas \ref{lem-admisslble} and \ref{lem-positive} we have
    \begin{align*}
    &\left|\sum_{n=1}^{+\infty}\frac{\zeta^{\star}_{(0,n]}(k_j,\ldots,k_1)\zeta_{(0,n+M)}(w_i)(\zeta_{(0,n+M)}(1))^{s-i})}{n^q}-\sum_{n=1}^{+\infty}\frac{\zeta^{\star}_{(0,n]}(k_j,\ldots,k_1)\zeta(w_i)(\zeta_{(0,M)}(1))^{s-i}}{n^q}\right|\\
    &\leqslant\left|\sum_{n=1}^{+\infty}\frac{\zeta^{\star}_{(0,n]}(k_j,\ldots,k_1)\zeta_{(0,n+M)}(w_i)(\zeta_{(0,n+M)}(1))^{s-i})}{n^q}-\sum_{n=1}^{+\infty}\frac{\zeta^{\star}_{(0,n]}(k_j,\ldots,k_1)\zeta_{(0,n+M)}(w_i)(\zeta_{(0,M)}(1))^{s-i}}{n^q}\right|\\
    &\quad+\left|\sum_{n=1}^{+\infty}\frac{\zeta^{\star}_{(0,n]}(k_j,\ldots,k_1)\zeta_{(0,n+M)}(w_i)(\zeta_{(0,M)}(1))^{s-i})}{n^q}-\sum_{n=1}^{+\infty}\frac{\zeta^{\star}_{(0,n]}(k_j,\ldots,k_1)\zeta(w_i)(\zeta_{(0,M)}(1))^{s-i}}{n^q}\right|\\
    &<\zeta(w_i)\left|\sum_{n=1}^{+\infty}\frac{\zeta^{\star}_{(0,n]}(k_j,\ldots,k_1)(\zeta_{(0,n+M)}(1))^{s-i})}{n^q}-\sum_{n=1}^{+\infty}\frac{\zeta^{\star}_{(0,n]}(k_j,\ldots,k_1)(\zeta_{(0,M)}(1))^{s-i}}{n^q}\right|\\
    &\quad+(\zeta_{(0,M)}(1))^{s-i}\sum_{n=1}^{+\infty}\frac{\zeta^{\star}_{(0,n]}(k_j,\ldots,k_1)|\zeta_{(0,n+M)}(w_i)-\zeta(w_i)|}{n^q}\\
    &<c_1\cdot\zeta(w_i)\frac{\ln^{j+s-i}(M)}{M}+
    c_i\frac{\ln^{r-j-1}(M)}{M}\\
    &<c\frac{\ln^r(M)}{M},
    \end{align*}
    hence, we obtain the desired conclusion.
\end{proof}

\begin{lem}\label{lem-negative-1}
    Let $\bfk=(k_1,\ldots,k_r)(k_r>1)$ be an admissible multi-index and $q\in\mathbb{Z}_{>1}$. Let $M>>1$ be a fixed integer, then we have
    $$\left|\sum_{-M<n<0}\frac{\zeta_{(n,0)}(k_1,\ldots,k_j)}{n^q}\zeta_{(0,n+M)}(k_{j+1},\ldots,k_r)-\sum_{n<0}\frac{\zeta_{(n,0)}(k_1,\ldots,k_j)}{n^q}\zeta(k_{j+1},\ldots,k_r)\right|<c\frac{\ln^{r-1}(M)}{M},$$
    where $j=0,1,\cdots,r$ and the constant $c$ is independent on $M$.
\end{lem}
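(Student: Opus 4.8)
The plan is to split the series $\sum_{-M<n<0}\frac{\zeta_{(n,0)}(k_1,\ldots,k_j)}{n^q}\zeta_{(0,n+M)}(k_{j+1},\ldots,k_r)$ into a main part whose tail matches the target series $\sum_{n<0}\frac{\zeta_{(n,0)}(k_1,\ldots,k_j)}{n^q}\zeta(k_{j+1},\ldots,k_r)$ and a collection of error terms, each of which I bound by $O(\ln^{r-1}(M)/M)$. First I would write, for each $n$ with $-M<n<0$,
\begin{align*}
\zeta_{(0,n+M)}(k_{j+1},\ldots,k_r)=\zeta(k_{j+1},\ldots,k_r)-\bigl(\zeta(k_{j+1},\ldots,k_r)-\zeta_{(0,n+M)}(k_{j+1},\ldots,k_r)\bigr),
\end{align*}
so that the difference in question becomes
\begin{align*}
-\sum_{-M<n<0}\frac{\zeta_{(n,0)}(k_1,\ldots,k_j)}{n^q}\bigl(\zeta(k_{j+1},\ldots,k_r)-\zeta_{(0,n+M)}(k_{j+1},\ldots,k_r)\bigr)-\sum_{n\leqslant-M}\frac{\zeta_{(n,0)}(k_1,\ldots,k_j)}{n^q}\zeta(k_{j+1},\ldots,k_r).
\end{align*}

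For the second sum, I would use the crude bound $\zeta_{(n,0)}(k_1,\ldots,k_j)\leqslant\zeta^{\star}_{(n,0)}(\{1\}_j)$, which by the reflection identity (Proposition \ref{prop}(3)) and a variant of Lemma \ref{lemma-1} is $O((1+\ln|n|)^j)$; since $q>1$, summing $\ln^j|n|/|n|^q$ over $n\leqslant-M$ gives $O(\ln^j(M)/M^{q-1})$, which is $O(\ln^{r-1}(M)/M)$. For the first sum I would invoke Lemma \ref{lem-admisslble}, replacing $n+M$ by the shifted range, to get $\bigl|\zeta(k_{j+1},\ldots,k_r)-\zeta_{(0,n+M)}(k_{j+1},\ldots,k_r)\bigr|<2(r-j)\frac{(1+\ln(n+M))^{r-j-1}}{n+M}$ whenever $n+M>0$; combined with the $O(\ln^j|n|)$ bound on $\zeta_{(n,0)}(k_1,\ldots,k_j)$ this reduces the estimate to bounding
\begin{align*}
\sum_{-M<n<0}\frac{\ln^j|n|\,(1+\ln(n+M))^{r-j-1}}{|n|^q\,(n+M)},
\end{align*}
which I split at $n=-M/2$: on $-M/2\leqslant n<0$ one has $n+M\geqslant M/2$, making the $1/(n+M)$ factor $O(1/M)$ and leaving a convergent $\zeta$-type sum, while on $-M<n<-M/2$ one has $|n|\geqslant M/2$, making $1/|n|^q$ of size $O(1/M^q)$ against at most $M/2$ terms each $O(\ln^{r-1}(M)/1)$, again $O(\ln^{r-1}(M)/M)$. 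Collecting the two pieces and absorbing constants into a single $c$ independent of $M$ completes the proof.

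The main obstacle I anticipate is handling the boundary region $n$ close to $-M$, where the factor $\zeta_{(0,n+M)}$ (with only finitely many, possibly zero, summation indices) degenerates and Lemma \ref{lem-admisslble} may not directly apply; there one must argue more directly that $\zeta_{(0,n+M)}(k_{j+1},\ldots,k_r)$ is bounded (indeed $\leqslant\zeta(k_{j+1},\ldots,k_r)$) and that the number of such $n$ is controlled, so the contribution is still swallowed by $O(\ln^{r-1}(M)/M)$ thanks to the $1/|n|^q$ decay with $q>1$. A secondary technical point is ensuring the logarithmic powers combine correctly — the worst exponent is $j+(r-j-1)=r-1$ — so that the stated bound $c\,\ln^{r-1}(M)/M$ is exactly what emerges rather than a larger power of $\ln M$.
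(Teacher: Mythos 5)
Your proposal follows essentially the same route as the paper's proof: split off the tail $\sum_{n\leqslant -M}$, control the main range $-M<n<0$ by combining Lemma \ref{lem-admisslble} (applied to the admissible index $(k_{j+1},\ldots,k_r)$) with the logarithmic bound on the truncated outer factor from Lemma \ref{lemma-1}, and then cut the resulting convolution-type sum at $M/2$; the paper merely reflects to positive summation indices first, which is cosmetic. One small caveat: for $j=r$ (where only the tail term survives) your crude bound $\zeta_{(n,0)}(k_1,\ldots,k_r)=O(\ln^{r}|n|)$ yields $\ln^{r}(M)/M$ when $q=2$, one logarithm more than claimed; you need admissibility here (after reflection the innermost exponent is $k_r\geqslant2$, giving $O(\ln^{r-1}|n|)$), which is how the paper's ``$j=r$ is obvious'' case is justified.
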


\begin{proof}If $j=r$, this is obvious. For $j=0,1,\cdots,r-1$, since
    \begin{align*}
        &\sum_{-M<n<0}\frac{\zeta_{(n,0)}(k_1,\ldots,k_j)}{n^q}\zeta_{(0,n+M)}(k_{j+1},\ldots,k_r)\\
        &=(-1)^{q+k_1+\cdots+k_j}\sum_{0<n<M}\frac{\zeta_{(0,n)}(k_j,\ldots,k_1)\cdot\zeta_{(0,M-n)}(k_{j+1},\ldots,k_r)}{n^q},
    \end{align*}
    hence
    \begin{align*}
        &\left|\sum_{0<n<M}\frac{\zeta_{(0,n)}(k_j,\ldots,k_1)\cdot\zeta_{(0,M-n)}(k_{j+1},\ldots,k_r)}{n^q}-\sum_{0<n<M}\frac{\zeta_{(0,n)}(k_j,\ldots,k_1)\cdot\zeta(k_{j+1},\ldots,k_r)}{n^q}\right|\\
        &<2r\sum_{0<n<M}\frac{\zeta_{(0,n)}(k_j,\ldots,k_1)}{n^q}\cdot\frac{(1+\ln(M-n))^{r-j-1}}{M-n}\qquad(\text{by\ Lemma\ \ref{lem-admisslble}})\\
        &<r2^r\sum_{0<n<M}\frac{(1+\ln(n)))^j}{n^2}\cdot\frac{(1+\ln(M-n))^{r-j-1}}{M-n}\qquad(\text{by\ Lemma\ \ref{lemma-1}})\\
        &=r2^r\left(\frac{(1+\ln(M-1))^{r-j-1}}{M-1}+\frac{(1+\ln(2))^j}{2^2}\cdot\frac{(1+\ln(M-2))^{r-j-1}}{M-2}\right)\\
        &\quad+r2^r\sum_{2<n<M-2}\frac{(1+\ln(n)))^j}{n^2}\cdot\frac{(1+\ln(M-n))^{r-j-1}}{M-n}\\
        &\quad+r2^r\left(\frac{(1+\ln(M-2))^j}{(M-2)^2}\cdot\frac{(1+\ln(2))^{r-j-1}}{2}+\frac{(1+\ln(M-1))^{j}}{(M-1)^2}\right)\\
        &<c_1\frac{\ln^{r-1}(M)}{M}+r4^r\sum_{2<n<M-2}\frac{\ln^j(n)\ln^{r-j-1}(M-n)}{n^2(M-n)}\\
        &=c_1\frac{\ln^{r-1}(M)}{M}+r4^r\sum_{2<n\leqslant\left\lfloor M/2\right\rfloor}\frac{\ln^j(n)\ln^{r-j-1}(M-n)}{n^2(M-n)}+r4^r\sum_{\lfloor M/2\rfloor<n<M-2}\frac{\ln^j(n)\ln^{r-j-1}(M-n)}{n^2(M-n)}\\
        &<c_1\frac{\ln^{r-1}(M)}{M}+c_2\frac{\ln^{r-j-1}(M)}{M}\sum_{2<n\leqslant\lfloor M/2\rfloor}\frac{\ln^j(n)}{n^2}+c_3\sum_{\lfloor M/2\rfloor<n<M-2}\frac{\ln^j(n)}{n^2}\\
        &<c_1\frac{\ln^{r-1}(M)}{M}+c_2\left(\sum_{n=1}^{+\infty}\frac{\ln^j(n)}{n^2}\right)\frac{\ln^{r-j-1}(M)}{M}+c_3\int_{M/2}^M\frac{\ln^j(x)}{x^2}\de x\\
        &<c'\frac{\ln^{r-1}(M)}{M},
    \end{align*}
     finally, its easy to see that
     $$\left|\sum_{n\leqslant-M}\frac{\zeta_{(n,0)}(k_1,\ldots,k_j)}{n^q}\zeta(k_{j+1},\ldots,k_r)\right|<c''\frac{\ln^{r-1}(M)}{M}.$$
     The proof is complete.
\end{proof}

\begin{lem}\label{lem-negative-2}
    Let $\bfk=(k_1,\ldots,k_r)$ be a positive multi-index and $q\in\mathbb{Z}_{>1}$. Let $M>>1$ be a fixed integer, then we have
    $$\left|\sum_{-M<n<0}\frac{\zeta_{(n,0)}(\bfk)}{n^q}(\zeta_{(0,n+M)}(1))^s-\sum_{n<0}\frac{\zeta_{(n,0)}(\bfk)}{n^q}(\zeta_{(0,M)}(1))^s\right|<c\frac{\ln^{r+s}(M)}{\sqrt{M}}.$$
    where the constant $c$ is independent on $M$.
\end{lem}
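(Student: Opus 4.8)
The plan is to separate the two sources of discrepancy between the finite sum $\sum_{-M<n<0}$ and the infinite sum $\sum_{n<0}$: the \emph{tail} over $n\le-M$ (where the integrand already carries the factor $(\zeta_{(0,M)}(1))^s$), and the \emph{body} over $-M<n<0$, where one must trade $(\zeta_{(0,n+M)}(1))^s$ for $(\zeta_{(0,M)}(1))^s$. Writing
\begin{align*}
&\sum_{-M<n<0}\frac{\zeta_{(n,0)}(\bfk)}{n^q}(\zeta_{(0,n+M)}(1))^s-\sum_{n<0}\frac{\zeta_{(n,0)}(\bfk)}{n^q}(\zeta_{(0,M)}(1))^s\\
&=\sum_{-M<n<0}\frac{\zeta_{(n,0)}(\bfk)}{n^q}\Big((\zeta_{(0,n+M)}(1))^s-(\zeta_{(0,M)}(1))^s\Big)-\sum_{n\le-M}\frac{\zeta_{(n,0)}(\bfk)}{n^q}(\zeta_{(0,M)}(1))^s,
\end{align*}
the first step is to bound the coefficients $\zeta_{(n,0)}(\bfk)$ uniformly. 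By the reflection identity of Proposition \ref{prop}(3) we have $\zeta_{(n,0)}(\bfk)=(-1)^{|\bfk|}\zeta_{(0,-n)}(k_r,\dots,k_1)$, and since $\zeta_{(0,-n)}(k_r,\dots,k_1)\le\zeta^{\star}_{(0,|n|-1]}(\{1\}_r)$, Lemma \ref{lemma-1} gives $|\zeta_{(n,0)}(\bfk)|\le2^{r-1}(1+\ln|n|)^r$ for every $n<0$ (the case $|n|=1$ being trivial as $\zeta_{(-1,0)}(\bfk)=0$); in particular both series above converge absolutely because $q>1$.

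For the tail, using $(\zeta_{(0,M)}(1))^s\le(1+\ln M)^s$, the bound $q\ge2$, and the elementary estimate $\int_M^{\infty}(\ln x)^r x^{-2}\,dx=O((\ln M)^r/M)$, one gets
\begin{align*}
\left|\sum_{n\le-M}\frac{\zeta_{(n,0)}(\bfk)}{n^q}(\zeta_{(0,M)}(1))^s\right|\le(1+\ln M)^s\cdot2^{r-1}\sum_{n\ge M}\frac{(1+\ln n)^r}{n^2}\le c\,\frac{(\ln M)^{r+s}}{M}.
\end{align*}

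For the body, I would imitate the binomial-expansion step in the proof of Lemma \ref{lem-positive}. Since $\zeta_{(0,M)}(1)=\zeta_{(0,n+M)}(1)+\zeta_{[n+M,M)}(1)$ and $0\le\zeta_{(0,n+M)}(1)\le\zeta_{(0,M)}(1)\le1+\ln M$, the binomial theorem yields
\begin{align*}
0\le(\zeta_{(0,M)}(1))^s-(\zeta_{(0,n+M)}(1))^s=\sum_{l=1}^{s}\binom{s}{l}(\zeta_{(0,n+M)}(1))^{s-l}(\zeta_{[n+M,M)}(1))^l\le c\,\zeta_{[n+M,M)}(1)\,(1+\ln M)^{s-1},
\end{align*}
where $\zeta_{[n+M,M)}(1)\le\zeta_{(0,M)}(1)\le1+\ln M$ was used to peel off all but one factor. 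Putting $m=-n$, so that $\zeta_{[n+M,M)}(1)=\sum_{j=M-m}^{M-1}\tfrac1j$ has $m$ terms, the body is bounded by a constant times $(1+\ln M)^{s-1}$ times $\sum_{m=1}^{M-1}\frac{(1+\ln m)^r}{m^q}\sum_{j=M-m}^{M-1}\frac1j$. I would estimate this double sum by splitting at $m=\lfloor M/2\rfloor$: for $m\le M/2$ one has $M-m\ge M/2$, hence $\sum_{j=M-m}^{M-1}\tfrac1j\le\tfrac{m}{M-m}\le\tfrac{2m}{M}$, which reduces that part to $\tfrac2M\sum_{m\le M/2}(1+\ln m)^r m^{1-q}=O((\ln M)^{r+1}/M)$ by integral comparison (the extra logarithm being needed only when $q=2$); for $m>M/2$ one uses $m^{-q}\le2^qM^{-q}$ together with $\sum_{j=M-m}^{M-1}\tfrac1j\le1+\ln M$, bounding that part by $O((\ln M)^{r+1}/M^{q-1})=O((\ln M)^{r+1}/M)$. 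Altogether the body is $O((\ln M)^{r+s}/M)$, and combined with the tail estimate this gives the claim -- in fact with $M$, a fortiori $\sqrt M$, in the denominator.

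The main obstacle is purely bookkeeping at the borderline exponent $q=2$: then $q-1=1$, so sums such as $\sum_{m\le M/2}(1+\ln m)^r/m$ converge only after sacrificing one extra power of $\ln M$, and in the regime where $n+M$ is small (i.e.\ $m$ close to $M$) the crude bound $\zeta_{[n+M,M)}(1)\le m/(M-m)$ is useless and must be replaced by $\zeta_{[n+M,M)}(1)\le1+\ln M$. Once the logarithmic factors are tracked consistently across the split, the remaining estimates are routine integral comparisons, exactly as in the proofs of Lemmas \ref{lem-positive} and \ref{lem-negative-1}.
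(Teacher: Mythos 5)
Your argument is correct, and its skeleton is the same as the paper's: split the discrepancy into the tail over $n\le -M$ plus the body over $-M<n<0$, expand the difference of $s$-th powers binomially in $\zeta_{[n+M,M)}(1)$, control the coefficients $\zeta_{(n,0)}(\bfk)$ via the reflection identity of Proposition \ref{prop} and Lemma \ref{lemma-1}, and finish by splitting the summation range and comparing with integrals. The genuine difference is where you cut and how you treat the boundary harmonic sum. The paper cuts at $n=\sqrt M$, bounds $\zeta_{[M-n,M)}(1)\le\ln\frac{M-1}{M-1-n}$ by its value at $n=\sqrt M$ on the inner range and crudely by $\ln(M-1)$ on the outer range, and it is precisely this choice that produces the $\sqrt M$ in the stated bound. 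You instead peel the $s$-th power down to a single factor of $\zeta_{[M-m,M)}(1)$ at the cost of $(1+\ln M)^{s-1}$, cut at $m=\lfloor M/2\rfloor$, and use $\zeta_{[M-m,M)}(1)\le m/(M-m)\le 2m/M$ on the first range and $m^{-q}\le 2^qM^{-q}$ together with $\zeta_{[M-m,M)}(1)\le 1+\ln M$ on the second; your handling of the borderline case $q=2$ (one extra logarithm, absorbed since the total exponent is still $r+s$) is correct. This sharper bookkeeping yields the stronger estimate $O\!\left(\ln^{r+s}(M)/M\right)$, which a fortiori gives the lemma as stated, so your version is a mild but genuine improvement over the paper's $O\!\left(\ln^{r+s}(M)/\sqrt M\right)$; nothing in the rest of the paper is lost by this, since downstream (Theorem \ref{thm-M} and Theorem \ref{Stuffle-Regularization}) only the decay to $0$ as $M\to\infty$ is used.
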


\begin{proof}
Since
\begin{align*}
    (\zeta_{(0,M+n)}(1))^s-(\zeta_{(0,M)}(1))^s=\sum_{i=1}^{s}(-1)^i\binom{s}{i}(\zeta_{(0,M)}(1))^{s-i}(\zeta_{[M+n,M)}(1))^i,
\end{align*}
and
\begin{align*}
    &\left|\sum_{-M<n<0}\frac{\zeta_{(n,0)}(\bfk)}{n^q}(\zeta_{[M+n,M)}(1))^i\right|\\
    &\leqslant\sum_{0<n<M}\frac{\zeta_{(0,n)}(\{1\}_r)}{n^2}(\zeta_{[M-n,M)}(1))^i\\
    &<\sum_{0<n<M}\frac{(\zeta_{(0,n)}(1))^r}{n^2}(\zeta_{[M-n,M)}(1))^i\\
    &<c_1\frac{\ln^{r+i}(M)}{M}+c_2\sum_{2<n<M-2}\frac{\ln^r(n)}{n^2}\ln^i\left(\frac{M-1}{M-1-n}\right)\\
    &<c_1\frac{\ln^{r+i}(M)}{M}+c_2\sum_{2<n\leqslant\sqrt{M}}\frac{\ln^r(n)}{n^2}\ln^i\left(\frac{M-1}{M-1-n}\right)+c_2\sum_{\sqrt{M}<n<M-2}\frac{\ln^r(n)}{n^2}\ln^i\left(\frac{M-1}{M-1-n}\right)\\
    &<c_1\frac{\ln^{r+i}(M)}{M}+c_2\ln^i\left(\frac{M-1}{M-1-\sqrt{M}}\right)\sum_{2<n\leqslant\sqrt{M}}\frac{\ln^r(n)}{n^2}+c_2\ln^i\left(M-1\right)\sum_{\sqrt{M}<n<M-2}\frac{\ln^r(n)}{n^2}\\
    &<c_1\frac{\ln^{r+i}(M)}{M}+c_2'\ln^i\left(\frac{M-1}{M-1-\sqrt{M}}\right)+c_2\ln^i\left(M-1\right)\int_{\sqrt{M}}^{+\infty}\frac{\ln^r(x)}{x^2}dx\\
    &<c_1\frac{\ln^{r+i}(M)}{M}+c_2'\ln^i\left(\frac{M-1}{M-1-\sqrt{M}}\right)+c_2''\frac{\ln^{i+r}\left(M\right)}{\sqrt{M}}\\
    &<c_3\frac{\ln^{r+s}(M)}{\sqrt{M}}.
    \end{align*}
Finally, its easy to see that
     $$\left|\sum_{n\leqslant-M}\frac{\zeta_{(n,0)}(\bfk)}{n^q}(\zeta_{(0,M)}(1))^s\right|<c_3'\frac{\ln^{r+s}(M)}{M}.$$
      The proof is complete.
\end{proof}

\begin{thm}\label{thm-M}
 Let $\bfk$ be a positive multi-index, $q\in\mathbb{Z}_{>1}$. Let $M$ be a sufficiently large positive real number, then we have
 \begin{align*}
    &\left|\sum_{-M<n<0}\frac{\zeta_{(n,0)}(k_1,\ldots,k_j)\zeta_{(0,n+M)}(\bfk)}{n^q}-\sum_{n<0}\frac{\zeta_{(n,0)}(k_1,\ldots,k_j)\zeta_*^{T=\zeta_{(0,M)}(1)}(\bfk)}{n^q}\right|\\
    &<c\frac{\ln^{r+s}(M)}{\sqrt{M}},
\end{align*}
where the constant $c$ is independent on $M$.
\end{thm}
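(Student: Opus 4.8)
The plan is to repeat, on the negative side of the real axis, the argument used to prove Theorem~\ref{thm-star}, with Lemmas~\ref{lem-negative-1} and~\ref{lem-negative-2} playing the roles that Lemmas~\ref{lem-admisslble} and~\ref{lem-positive} play there. Write $s$ for the number of trailing $1$'s of the index $(k_{j+1},\ldots,k_r)$ occurring in the right-hand factor and, using $(\Q\langle Y\rangle,*)\cong(\Q\langle Y\rangle^0,*)[y_1]$, expand
\[ y_{k_{j+1}}y_{k_{j+2}}\cdots y_{k_r}=w_0*y_1^{*s}+w_1*y_1^{*(s-1)}+\cdots+w_{s-1}*y_1+w_s, \]
where each $w_i\in\Q\langle Y\rangle^0$ is a $\Q$-linear combination of admissible words. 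Because the truncated value $\zeta_{(0,N)}(\,\cdot\,)$ satisfies the stuffle relation (so $\bfa\mapsto\zeta_{(0,N)}(\bfa)$ extends to an algebra homomorphism $(\Q\langle Y\rangle,*)\to\R$) and $\zeta_*^{T=t}(\,\cdot\,)$ is the specialization at $T=t$ of the stuffle-regularization homomorphism (which sends $y_1$ to $T$), applying these two homomorphisms to the displayed identity gives
\[ \zeta_{(0,n+M)}(k_{j+1},\ldots,k_r)=\sum_{i=0}^{s}\zeta_{(0,n+M)}(w_i)\bigl(\zeta_{(0,n+M)}(1)\bigr)^{s-i},\qquad \zeta_*^{T=\zeta_{(0,M)}(1)}(k_{j+1},\ldots,k_r)=\sum_{i=0}^{s}\zeta(w_i)\bigl(\zeta_{(0,M)}(1)\bigr)^{s-i}. \]
Substituting both expansions, it suffices to bound, for each fixed $i$,
\[ \Delta_i:=\sum_{-M<n<0}\frac{\zeta_{(n,0)}(k_1,\ldots,k_j)\,\zeta_{(0,n+M)}(w_i)\bigl(\zeta_{(0,n+M)}(1)\bigr)^{s-i}}{n^q}-\sum_{n<0}\frac{\zeta_{(n,0)}(k_1,\ldots,k_j)\,\zeta(w_i)\bigl(\zeta_{(0,M)}(1)\bigr)^{s-i}}{n^q}. \]

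Next I would insert the intermediate quantity $\sum_{-M<n<0}\zeta_{(n,0)}(k_1,\ldots,k_j)\zeta_{(0,n+M)}(w_i)\bigl(\zeta_{(0,M)}(1)\bigr)^{s-i}/n^q$ and bound $|\Delta_i|$ by the sum of two pieces. In the first piece only the power of $\zeta(1)$ changes; using $0\le\zeta_{(0,n+M)}(w_i)\le\zeta(w_i)$ and passing to absolute values inside the (finite) sum, one is reduced, up to the constant $\zeta(w_i)$, to precisely the estimate carried out in the proof of Lemma~\ref{lem-negative-2}, now with first index $(k_1,\ldots,k_j)$ of depth $j$ and exponent $s-i$, so this piece is $O\bigl(\ln^{j+s-i}(M)/\sqrt M\bigr)$, and $j+s-i\le r+s$. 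In the second piece $\zeta_{(0,n+M)}(w_i)$ is replaced by $\zeta(w_i)$ and the positive constant $\bigl(\zeta_{(0,M)}(1)\bigr)^{s-i}\le(1+\ln M)^{s-i}$ is pulled out; what remains is controlled by Lemma~\ref{lem-negative-1} (applied term by term to the admissible words occurring in $w_i$, each appended to $(k_1,\ldots,k_j)$; the scalar term $w_i=\varnothing$ is handled directly via $\bigl|\sum_{n\le-M}\zeta_{(n,0)}(k_1,\ldots,k_j)/n^q\bigr|=O(\ln^{j}(M)/M)$), so this piece is $O\bigl(\ln^{c_0}(M)/M\bigr)$ for some $c_0$ depending only on the data. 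Since $M$ is assumed large, $1/M$ swamps any fixed power of $\ln M$ against $1/\sqrt M$, so the second piece is also $O\bigl(\ln^{r+s}(M)/\sqrt M\bigr)$; summing over $i=0,\ldots,s$ yields the assertion.

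I expect the one delicate point to be the first-piece estimate. It is essentially Lemma~\ref{lem-negative-2}, but in re-using that lemma one must be careful that the weights $1/n^q$ change sign over the negative integers, so — unlike on the positive side in Theorem~\ref{thm-star}, where every term is nonnegative — one cannot simply factor $\zeta_{(0,n+M)}(w_i)$ out of the sum and must first pass to absolute values term by term before splitting the range. This is also where the $M^{-1/2}$ decay (rather than $M^{-1}$) enters, through the splitting of the summation range at $n\approx\sqrt M$ inside the proof of Lemma~\ref{lem-negative-2}. The remaining ingredients — the stuffle-homomorphism property of $\zeta_{(0,N)}$, finiteness of the $w_i$, and the logarithmic-power arithmetic — are routine.
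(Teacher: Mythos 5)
Your proposal is correct and follows essentially the same route as the paper: the same expansion of the tail word as a polynomial in $y_1$ with admissible coefficients via $(\Q\langle Y\rangle,*)\cong(\Q\langle Y\rangle^0,*)[y_1]$, followed by a triangle-inequality insertion of an intermediate sum and an application of Lemmas \ref{lem-negative-1} and \ref{lem-negative-2}. The only differences are cosmetic — you insert the intermediate term with the power $(\zeta_{(0,M)}(1))^{s-i}$ changed first (the paper changes the $w_i$ factor first) and you handle the sign of $1/n^q$ by taking absolute values termwise rather than reflecting to positive $n$ as the paper's lemmas do.
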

\begin{proof}
    Let $\bfk=(k_1,\ldots,k_l)$ be a positive multi-index, similar to the argument in Theorem \ref{thm-star}, we also have
    $$y_{k_1}y_{k_2}\cdots y_{k_r}=w_0*y_1^{*s}+w_1*y_1^{*(s-1)}+\cdots+w_{s-1}*y_1+w_s,$$
    where $w_s,w_{s-1},\cdots,w_1,w_0$ are all admissible words. By Lemmas \ref{lem-negative-1} and \ref{lem-negative-2} we have
    \begin{align*}
    &\left|\sum_{0<n<M}\frac{\zeta_{(0,n)}(k_j,\ldots,k_1)\zeta_{(0,M-n)}(w_i)(\zeta_{(0,M-n)}(1))^{s-i})}{n^q}-\sum_{0<n<M}\frac{\zeta_{(0,n)}(k_j,\ldots,k_1)\zeta(w_i)(\zeta_{(0,M)}(1))^{s-i}}{n^q}\right|\\
    &\leqslant\left|\sum_{0<n<M}\frac{\zeta_{(0,n)}(k_j,\ldots,k_1)\zeta_{(0,M-n)}(w_i)(\zeta_{(0,M-n)}(1))^{s-i}}{n^q}-\sum_{0<n<M}\frac{\zeta_{(0,n)}(k_j,\ldots,k_1)\zeta(w_i)(\zeta_{(0,M-n)}(1))^{s-i}}{n^q}\right|\\
    &\quad+\left|\sum_{0<n<M}\frac{\zeta_{(0,n)}(k_j,\ldots,k_1)\zeta(w_i)(\zeta_{(0,M-n)}(1))^{s-i})}{n^q}-\sum_{0<n<M}\frac{\zeta_{(0,n)}(k_j,\ldots,k_1)\zeta(w_i)(\zeta_{(0,M)}(1))^{s-i}}{n^q}\right|\\
    &\leqslant(\zeta_{(0,M)}(1))^{s-i}\left|\sum_{0<n<M}\frac{\zeta_{(0,n)}(k_j,\ldots,k_1)\zeta_{(0,M-n)}(w_i)}{n^q}-\sum_{0<n<M}\frac{\zeta_{(0,n)}(k_j,\ldots,k_1)\zeta(w_i)}{n^q}\right|\\
    &\quad+\left|\sum_{0<n<M}\frac{\zeta_{(0,n)}(k_j,\ldots,k_1)\zeta(w_i)(\zeta_{(0,M-n)}(1))^{s-i})}{n^q}-\sum_{0<n<M}\frac{\zeta_{(0,n)}(k_j,\ldots,k_1)\zeta(w_i)(\zeta_{(0,M)}(1))^{s-i}}{n^q}\right|\\
    &<c_1\frac{\ln^{s-i+r}(M)}{M}+c_2\frac{\ln^{r+s-i}(M)}{\sqrt{M}}\\
    &<c\frac{\ln^{r+s}(M)}{\sqrt{M}}.
    \end{align*}
    Hence, we obtain the desired conclusion.
\end{proof}

\begin{lem}\label{lem-M}
    Let $\bfk=(k_1,\ldots,k_r)\ (k_r>1)$ be a positive multi-index and $k\in\mathbb{Z}_{>1}$. Let $M>>1$ be a fixed integer, then we have
    $$\left|\sum_{n\leqslant-M}\frac{\zeta_{(n,n+M)}(k_1,\ldots,k_r)}{n^k}\right|<2^r(r+2)\cdot\frac{\ln^r(M)}{M}.$$
\end{lem}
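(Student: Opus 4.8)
The plan is to use the Reflection identity to rewrite each summand as (up to a sign) an ordinary finite multiple harmonic sum with positive indices, to bound every such sum by a single quantity that is \emph{uniform in $n$}, and then to sum the absolutely convergent tail $\sum_{n\leqslant-M}|n|^{-k}$.

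Concretely, I would first apply the Reflection identity, Proposition \ref{prop}(3), with $m_1=n$, $m_2=n+M$ and $s=0$:
\[
\zeta_{(n,n+M)}(k_1,\ldots,k_r)=(-1)^{|\bfk|}\,\zeta_{(-n-M,-n)}(k_r,\ldots,k_1).
\]
For $n\leqslant-M$ we have $-n-M\geqslant 0$, so every summation index of the right-hand side is a positive integer and the sum has positive terms; hence $|\zeta_{(n,n+M)}(\bfk)|=\zeta_{(-n-M,-n)}(k_r,\ldots,k_1)$. Since all exponents and all indices are $\geqslant 1$, I would then drop all exponents to $1$, obtaining $\zeta_{(-n-M,-n)}(k_r,\ldots,k_1)\leqslant\zeta_{(-n-M,-n)}(\{1\}_r)$, and apply the power-sum estimate $\zeta_{(a,b)}(\{1\}_r)\leqslant\frac1{r!}\bigl(\zeta_{(a,b)}(1)\bigr)^r$ already used in the proof of Lemma \ref{lemma-1}. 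The crucial point is uniformity: writing $a=-n-M\geqslant 0$, the window $(a,a+M)$ always has length $M$, so
\[
\zeta_{(a,a+M)}(1)=\sum_{j=1}^{M-1}\frac1{a+j}\leqslant\sum_{j=1}^{M-1}\frac1j=H_{M-1}\leqslant 1+\ln M,
\]
independently of $n$. This gives $|\zeta_{(n,n+M)}(\bfk)|\leqslant(1+\ln M)^r/r!$ for every $n\leqslant-M$.

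It remains to sum. Since $k\in\Z_{>1}$ we have $\sum_{n\leqslant-M}|n|^{-k}\leqslant\sum_{N\geqslant M}N^{-2}\leqslant 2/M$, so the series converges absolutely and
\[
\Bigl|\sum_{n\leqslant-M}\frac{\zeta_{(n,n+M)}(\bfk)}{n^k}\Bigr|\leqslant\frac{(1+\ln M)^r}{r!}\cdot\frac2M .
\]
For $M$ large enough that $\ln M\geqslant 1$ one has $(1+\ln M)^r\leqslant 2^r(\ln M)^r$, so this is at most $\frac{2^{r+1}}{r!}\cdot\frac{\ln^r M}{M}<2^r(r+2)\frac{\ln^r M}{M}$, using $\frac2{r!}\leqslant r+2$ for all $r\geqslant 1$. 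I do not expect a genuine obstacle here; the one point that needs care is keeping all estimates uniform in $n$ in the middle step — bounding $\zeta_{(-n-M,-n)}(1)$ by something like $\ln|n|$ would make the tail sum diverge, and the correct observation is that the summation window has the fixed length $M$, so the relevant harmonic sum never exceeds $H_{M-1}$. (If one wants to exploit the hypothesis $k_r>1$, peeling off the first index of $(k_r,\ldots,k_1)$ via $\zeta_{(a,a+M)}(k_r,\ldots,k_1)\leqslant\sum_{a<p<a+M}p^{-2}\,\zeta_{(p,a+M)}(\{1\}_{r-1})$ together with $\sum_{p\geqslant1}p^{-2}<2$ sharpens the log-power to $r-1$; the cruder bound above already suffices.)
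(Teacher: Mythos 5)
Your proof is correct and follows essentially the same route as the paper: reflect via Proposition \ref{prop}(3), drop all exponents to $1$, apply the power-sum bound $\zeta_{(a,b)}(\{1\}_r)\leqslant\frac{1}{r!}\left(\zeta_{(a,b)}(1)\right)^r$, and bound the resulting harmonic sum by $1+\ln M$. The only deviation is that you keep the summation window at its fixed length $M$, obtaining a bound uniform in $n$ and finishing with $\sum_{n\geqslant M}n^{-k}\leqslant 2/M$, whereas the paper enlarges the window to $(0,n+M)$ and must then estimate $\sum_{n\geqslant M}\ln^r(n)/n^2$ by an integral comparison; your variant is marginally cleaner and gives the same stated constant.
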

\begin{proof} Through direct calculation, it follows that
    \begin{align*}
        \left|\sum_{n\leqslant-M}\frac{\zeta_{(n,n+M)}(k_1,\ldots,k_r)}{n^k}\right|&=\left|(-1)^{q+|\bfk|}\sum_{n\geqslant M}\frac{\zeta_{(n-M,n)}(k_r,\ldots,k_1)}{n^k}\right|\ (\text{by reflection})\\
        &=\sum_{n=0}^{\infty}\frac{\zeta_{(0,n+M)}(k_r,\ldots,k_1)}{(n+M)^k}\\
        &\leqslant\sum_{n=0}^{\infty}\frac{\zeta_{(0,n+M)}(\{1\}_r)}{(n+M)^2}\\
        &<\frac{1}{r!}\sum_{n=0}^{\infty}\frac{(\zeta_{(0,n+M)}(1))^r}{(n+M)^2}\\
        &<\frac{1}{r!}\sum_{n=0}^{\infty}\frac{\left(1+\int_1^{n+M}\frac{1}{x}dx\right)^r}{(n+M)^2}\\
        &<\frac{2^r}{r!}\sum_{n=0}^{\infty}\frac{\ln^r(n+M)}{(n+M)^2}\\
        &<\frac{2^r}{r!}\sum_{n=M}^{\infty}\frac{\ln^r(n)}{n^2}\\
        &<2^r(r+2)\cdot\frac{\ln^r(M)}{M}.
    \end{align*}
Therefore, we have completed the proof of this lemma.
\end{proof}

Based on the results of the aforementioned theorems and lemmas, we can now present the parity formulas for the double regularization of multiple zeta values.
\begin{thm}\label{Stuffle-Regularization}(Stuffle Regularization)
For $q\in\Z_{>1}$ and $\bfk=(k_1,\ldots,k_r)\in (\Z_{>0})^r$, we have
    \begin{align*}
&\sum_{j=0}^r (-1)^j \zeta^\star(k_j,\ldots,k_1,q)\zeta_*^T(k_{j+1},\ldots,k_r)\\
&-2\sum_{2k+m=q}\left(\sum_{|\bfn|=m} \prod\limits_{l=1}^r \binom{-k_l}{n_l} \zeta_*^T(k_1+n_1,\ldots,k_r+n_r)\right)\zeta(2k)\\
&+ \sum_{j=0}^r (-1)^{q+k_1+\cdots+k_j} \zeta(k_j,\ldots,k_1,q)\zeta_*^T(k_{j+1},\ldots,k_r)\\
&-2 \sum_{j=1}^r(-1)^{q+|\bfk_{[1,j)}|}\sum_{2k+m= k_j} \left(\sum_{|\bfn|=m} \binom{-q}{n_j}\prod_{l\neq j}\binom{-k_l}{n_l}\zeta_*^T(k_{j+1}+n_{j+1},\ldots,k_r+n_r)
\atop\times(-1)^{|\bfn_{[1,j]}|}\zeta(k_{j-1}+n_{j-1},\ldots,k_1+n_1,q+n_j)\right)\zeta(2k)=0,
\end{align*}
where $\bfn=(n_1,\ldots,n_r)\in(\mathbb{Z}_{\geqslant0})^r$ and $\zeta_*^T(\bfk)$ denote the stuffle regularization.
\end{thm}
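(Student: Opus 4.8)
The plan is to let the integer $M\to\infty$ in the fixed-$M$ identity of Corollary~\ref{thm-residue-contour-one}, after rewriting each of its five groups of terms so that every finite quantity $\zeta_{(0,n+M)}(\cdot)$ or $\zeta_{(0,M)}(\cdot)$ is replaced by the stuffle value $\zeta_*^{\,T}(\cdot)$ evaluated at $T=\zeta_{(0,M)}(1)$, up to an error $O(\mathrm{poly}(\ln M)/\sqrt M)$. Since $\zeta_{(0,M)}(1)=H_{M-1}\to\infty$, the limit turns the resulting numerical identity into a polynomial identity in $T$, which is exactly the claim.

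First I would dispatch the five groups of Corollary~\ref{thm-residue-contour-one} one at a time. In the first group, Theorem~\ref{thm-star} replaces $\zeta_{(0,n+M)}(k_{j+1},\dots,k_r)$ by the $n$-independent constant $\zeta_*^{\,T=\zeta_{(0,M)}(1)}(k_{j+1},\dots,k_r)$ (error $O(\ln^rM/M)$), and the remaining series is $\sum_{n\ge1}\zeta^\star_{(0,n]}(k_j,\dots,k_1)/n^q=\zeta^\star(k_j,\dots,k_1,q)$, obtained by adjoining the outer index $n$ as the last summation variable (convergent since $q>1$). For the second group, the stuffle product formula for the truncated sums $\zeta_{(0,M)}$ (as in the proof of Theorem~\ref{thm-star}) together with Lemma~\ref{lem-admisslble} gives $\zeta_{(0,M)}(\bfk')=\zeta_*^{\,T=\zeta_{(0,M)}(1)}(\bfk')+O(\mathrm{poly}(\ln M)/M)$ for every positive multi-index $\bfk'$, which covers $\zeta_{(0,M)}(k_1+n_1,\dots,k_r+n_r)$. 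For the third and fourth groups, Theorem~\ref{thm-M} replaces $\zeta_{(0,n+M)}(\cdot)$ by the corresponding $\zeta_*^{\,T=\zeta_{(0,M)}(1)}(\cdot)$ (error $O(\mathrm{poly}(\ln M)/\sqrt M)$; the hypothesis $q+n_j>1$ holds), after which the reflection formula Proposition~\ref{prop}(3) turns $\sum_{n<0}\zeta_{(n,0)}(\bfk_{[1,j)}+\bfn_{[1,j)})/n^{q+n_j}$ into $(-1)^{q+|\bfk_{[1,j)}|}(-1)^{|\bfn_{[1,j]}|}\zeta(k_{j-1}+n_{j-1},\dots,k_1+n_1,q+n_j)$, and likewise $\sum_{n<0}\zeta_{(n,0)}(k_1,\dots,k_j)/n^q=(-1)^{q+k_1+\cdots+k_j}\zeta(k_j,\dots,k_1,q)$; these are exactly the signs and index-reversed arguments in the statement. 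Finally, Lemma~\ref{lem-M} gives $\sum_{n\le-M}\zeta_{(n,n+M)}(\bfk)/n^q=O(\ln^rM/M)$, so the fifth group drops out in the limit.

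Assembling these substitutions, Corollary~\ref{thm-residue-contour-one} reads
\[
P\!\bigl(\zeta_{(0,M)}(1)\bigr)=E(M),\qquad|E(M)|=O\!\bigl(\mathrm{poly}(\ln M)/\sqrt M\bigr),
\]
where $P(T)\in\R[T]$ is exactly the left-hand side of the displayed identity with an indeterminate $T$ in place of the stuffle parameter, and $E(M)\to0$ as $M\to\infty$. Because $\zeta_{(0,M)}(1)=H_{M-1}\to\infty$, if $P$ had degree $D$ with leading coefficient $a_D\ne0$ then $P(H_{M-1})/H_{M-1}^{\,D}\to a_D\ne0$, which contradicts $P(H_{M-1})\to0$ and $H_{M-1}^{\,D}\ge1$; hence every coefficient of $P$ vanishes, i.e.\ $P\equiv0$. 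This polynomial identity in $T$ is the asserted stuffle-regularized parity formula.

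The main obstacle will be the sign and re-indexing bookkeeping in the third and fourth groups—verifying that, after Theorem~\ref{thm-M}, the reflection formula, and carrying the binomial factors $\binom{-q}{n_j}\prod_{l\ne j}\binom{-k_l}{n_l}$ through the limit, one recovers exactly the arguments $\zeta(k_{j-1}+n_{j-1},\dots,k_1+n_1,q+n_j)$ and the sign $(-1)^{q+|\bfk_{[1,j)}|}(-1)^{|\bfn_{[1,j]}|}$. The final degree argument—that the divergence of $\zeta_{(0,M)}(1)$ upgrades one numerical relation to a polynomial identity in $T$—is short, and it is precisely what yields a genuinely regularized (hence $k_r=1$-admissible) parity formula.
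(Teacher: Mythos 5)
Your proposal is correct and follows essentially the same route as the paper: it takes the fixed-$M$ identity of Corollary~\ref{thm-residue-contour-one}, replaces each group via Theorem~\ref{thm-star}, the stuffle product for truncated sums, Theorem~\ref{thm-M} together with the reflection formula, and Lemma~\ref{lem-M}, and then lets $M\to\infty$. Your explicit degree argument upgrading $P(\zeta_{(0,M)}(1))\to0$ to the polynomial identity $P\equiv0$ (and your error-term treatment of the second group) only makes precise a step the paper leaves implicit.
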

\begin{proof}Let $T=\zeta_{(0,M)}(1)$, according to the Corollary \ref{thm-residue-contour-one}, we only need to consider the following four parts.

   \textbf{Part 1}. It follows from Theorem \ref{thm-star} that, as $M\to+\infty$
\begin{align*}
    &\sum_{n=1}^{+\infty}\frac{\zeta^{\star}_{(0,n]}(k_j,\ldots,k_1)\zeta_{(0,n+M)}(k_{j+1},\ldots,k_r)}{n^q}\\
    &=\zeta^{\star}(k_j,\ldots,k_1,q)\zeta_*^T(k_{j+1},\ldots,k_r)+O\left(\frac{\ln^{t_1}(M)}{M}\right)
\end{align*}
for some $t_1\in\mathbb{Z}_{>0}$.

\textbf{Part 2}. According to the stuffle regularization of multiple zeta values, we have
$$\zeta_{(0,M)}(k_1+n_1,\ldots,k_r+n_r)=\zeta_*^T(k_1+n_1,\ldots,k_r+n_r).$$

\textbf{Part 3}. It follows from Theorem \ref{thm-M} that, as $M\to+\infty$

\begin{align*}
    &\sum_{-M<n<0}\frac{\zeta_{(n,0)}(k_1+n_1,\ldots,k_{j-1}+n_{j-1})}{n^{q+n_j}}\cdot \zeta_{(0,n+M)}(k_{j+1}+n_{j+1},\ldots,k_r+n_r)\\
    &=(-1)^{q+|k_{[1,j)]}|+|n_{[1,j]}|}\zeta(k_{j-1}+n_{j-1},\ldots,k_1+n_1,q+n_j)\cdot \zeta_*^T(k_{j+1}+n_{j+1},\ldots,k_r+n_r)\\
    &\quad+O\left(\frac{\ln^{t_2}(M)}{\sqrt{M}}\right)
\end{align*}
for some $t_2\in\mathbb{Z}_{>0}$.

\textbf{Part 4}. Finally, according to Lemma \ref{lem-M}, as $M\to+\infty$, we have
$$\sum_{n\leqslant-M}\frac{\zeta_{(n,n+M)}(k_1,\ldots,k_r)}{n^q}=O\left(\frac{\ln^{t_3}(M)}{M}\right)$$
   for some $t_3\in\mathbb{Z}_{>0}$.

   Combining the four parts above, we obtain the desired conclusion.
\end{proof}
Similarly, we also have shuffle regularization.

\begin{thm}\label{Shuffle-Regularization}(Shuffle Regularization)
For $q\in\Z_{>1}$ and $\bfk=(k_1,\ldots,k_r)\in (\Z_{>0})^r$, we have
    \begin{align*}
&\sum_{j=0}^r (-1)^j \zeta^\star(k_j,\ldots,k_1,q)\zeta_{\shuffle}^T(k_{j+1},\ldots,k_r)\\
&-2\sum_{2k+m=q}\left(\sum_{|\bfn|=m} \prod\limits_{l=1}^r \binom{-k_l}{n_l} \zeta_{\shuffle}^T(k_1+n_1,\ldots,k_r+n_r)\right)\zeta(2k)\\
&+ \sum_{j=0}^r (-1)^{q+k_1+\ldots+k_j} \zeta(k_j,\ldots,k_1,q)\zeta_{\shuffle}^T(k_{j+1},\ldots,k_r)\\
&-2 \sum_{j=1}^r(-1)^{q+|\bfk_{[1,j)}|}\sum_{2k+m= k_j} \left(\sum_{|\bfn|=m} \binom{-q}{n_j}\prod_{l\neq j}\binom{-k_l}{n_l}\zeta_{\shuffle}^T(k_{j+1}+n_{j+1},\ldots,k_r+n_r)
\atop\times(-1)^{|\bfn_{[1,j]}|}\zeta(k_{j-1}+n_{j-1},\ldots,k_1+n_1,q+n_j)\right)\zeta(2k)=0,
\end{align*}
where $\bfn=(n_1,\ldots,n_r)\in(\mathbb{Z}_{\geqslant0})^r$ and $\zeta_{\shuffle}^T(\bfk)$ denote the shuffle regularization.
\end{thm}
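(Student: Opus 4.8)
The plan is to derive Theorem \ref{Shuffle-Regularization} from Theorem \ref{Stuffle-Regularization} by transporting the latter across the canonical comparison map between the two regularizations, rather than repeating the whole contour-integral computation of Section 4. The first point to make precise is that Theorem \ref{Stuffle-Regularization} is a polynomial identity in the formal variable $T$: its proof realizes the vanishing quantity of Corollary \ref{thm-residue-contour-one} in the form $\Phi\bigl(\zeta_{(0,M)}(1)\bigr)+E(M)$, where $\Phi(T)$ is the displayed $\R$-linear combination of stuffle-regularized values $\zeta_*^T(\bfm)$ (with coefficients built from convergent MZV, MZSV, and even zeta values) and $E(M)=O\bigl(\ln^{t}(M)/\sqrt M\bigr)\to 0$. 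Since $\zeta_{(0,M)}(1)=\sum_{n=1}^{M-1}1/n$ runs through infinitely many distinct reals and $\Phi\bigl(\zeta_{(0,M)}(1)\bigr)=-E(M)\to 0$ along that sequence, the polynomial $\Phi$ vanishes identically, so Theorem \ref{Stuffle-Regularization} holds coefficientwise in $\R[T]$.

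Next I would invoke the Ihara--Kaneko--Zagier comparison \cite{IKZ2006}: there is an $\R$-linear automorphism $\rho$ of $\R[T]$, given on exponential generating series by $\rho(e^{Tu})=A(u)e^{Tu}$ with $A(u)=\exp\bigl(\sum_{n\geq 2}\tfrac{(-1)^n}{n}\zeta(n)u^n\bigr)$, characterized by $\rho\bigl(\zeta_*^T(\bfm)\bigr)=\zeta_{\shuffle}^T(\bfm)$ for every index $\bfm$. Applying $\rho$ to both sides of $\Phi(T)\equiv 0$ should produce exactly Theorem \ref{Shuffle-Regularization}, once one has verified the bookkeeping: in each summand of $\Phi$ the unique $T$-dependent factor is a single value $\zeta_*^T(\bfm)$ (so that $\rho$ applies to it directly, not to a product of two such factors), and every remaining factor is a scalar --- a rational number, an even zeta value $\zeta(2k)$, and a genuinely convergent multiple (star) zeta value, the latter because the relevant indices $(k_j,\dots,k_1,q)$ and $(k_{j-1}+n_{j-1},\dots,k_1+n_1,q+n_j)$ are admissible (their last entry is $\geq q\geq 2$). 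Since $\rho$ is $\R$-linear it fixes all these scalars and commutes with multiplication by them, hence it replaces every $\zeta_*^T(\bfm)$ by $\zeta_{\shuffle}^T(\bfm)$ while leaving all numerical coefficients untouched; as $\rho(0)=0$, the identity of Theorem \ref{Shuffle-Regularization} follows with the identical coefficients.

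The main obstacle is the first step: justifying that the limit $M\to\infty$ in the proof of Theorem \ref{Stuffle-Regularization} really yields a bona fide polynomial identity, which forces one to retain the quantitative error bounds $O(\ln^{t}(M)/M)$ and $O(\ln^{t}(M)/\sqrt M)$ from Theorems \ref{thm-star} and \ref{thm-M} and Lemma \ref{lem-M} (uniform in $M$, not merely through $T=\zeta_{(0,M)}(1)$) rather than only their leading order, so that a polynomial tending to $0$ along $\bigl(\zeta_{(0,M)}(1)\bigr)_M$ may be concluded to be the zero polynomial. By contrast, fixing the direction of $\rho$ is a one-line depth-two sanity check, $\zeta_*^T(1,1)=\tfrac12\bigl(T^2-\zeta(2)\bigr)\mapsto \tfrac12 T^2=\zeta_{\shuffle}^T(1,1)$, and the term-by-term bookkeeping, though lengthy, is routine. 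I would also point out that a fully self-contained re-derivation in the spirit of Section 4 is not genuinely available along these lines, since the partial sums $\zeta_{(0,N)}$ obey the stuffle and not the shuffle product; any such argument must reintroduce the Ihara--Kaneko--Zagier input at some stage, so routing through $\rho$ is the most economical choice.
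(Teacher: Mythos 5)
Your proposal is correct and follows essentially the same route as the paper: the paper's own proof likewise deduces the shuffle version by applying the Ihara--Kaneko--Zagier map $\rho$, with $\rho(\zeta_*^T(\bfk))=\zeta_{\shuffle}^T(\bfk)$, termwise to the identity of Theorem \ref{Stuffle-Regularization}. Your extra observations --- that the stuffle identity holds as a polynomial identity in $T$, and that $\rho$ is only $\R$-linear so one must check each summand carries a single $T$-dependent factor with all other factors convergent scalars --- merely make explicit details the paper leaves implicit.
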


\begin{proof}
For a positive multi-index $\bfk$, Ihara, Kaneko, and Zagier \cite{IKZ2006} proved the following famous regularized double shuffle theorem:
$$\rho(\zeta_*^T(\bfk))=\zeta_{\shuffle}^T(\bfk).$$
    By applying the map $\rho$ to the formula in Theorem \ref{Stuffle-Regularization}, we obtain this conclusion.
\end{proof}

\begin{exa}
    Letting $r=1$ in Theorem \ref{Shuffle-Regularization} yields the following well-known result (see \cite[Thm. 3.1]{Flajolet-Salvy}).
\begin{align*}
    &\zeta(q)\zeta_*^T(k_1)-\zeta^{\star}(k_1,q)+(-1)^q\zeta(q)\zeta_*^T(k_1)+(-1)^{q+k_1}\zeta(k_1,q)\\
    &-2\sum_{2k+m=q}\binom{-k_1}{m}\zeta_*^T(k_1+m)\zeta(2k)-2(-1)^q\sum_{2k+m=k_1}(-1)^m\binom{-q}{m}\zeta(q+m)\zeta(2k)=0
\end{align*}

\end{exa}

\begin{cor}
Let $\bfk=(k_1,\ldots,k_r)$ be an admissible multi-index, we have
$$((-1)^{k_1+\cdots+k_r}-(-1)^r)\zeta(k_1,\ldots,k_r)\equiv0\ (\rm{mod}\ \rm{products}
    ).$$
If $k_1+\cdots+k_r\not\equiv r\ (\rm{mod}\ 2)$, then $\ze(k_1,\ldots,k_r)$ can be expressed in terms of lower depth multiple zeta values.
\end{cor}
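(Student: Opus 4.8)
The plan is to read the corollary off from the regularized parity formula of Theorem~\ref{Shuffle-Regularization} (Theorem~\ref{Stuffle-Regularization} serves equally well) by choosing its parameters so that $\ze(\bfk)$ is the unique ``new'' term of depth $r$. Since $\bfk=(k_1,\dots,k_r)$ is admissible, $k_r>1$, so I may apply Theorem~\ref{Shuffle-Regularization} with $q:=k_r\in\Z_{>1}$ and with the reversed truncation $\ola{\bfk_{[1,r-1]}}=(k_{r-1},k_{r-2},\dots,k_1)$ in place of its ``$\bfk$''. The identity produced is a polynomial identity in the regularization parameter $T$; taking its constant term in $T$ (that is, setting $T=0$) turns every $\zeta_{\shuffle}^{T}(\cdot)$ into an ordinary shuffle-regularized value, which is a $\Q$-linear combination of genuine multiple zeta values of the same weight and of no larger depth.

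The next step is to locate the depth-$r$ content of this relation. After the substitution, the $j=r-1$ (full-length) summand of the first sum is $(-1)^{r-1}\zeta^{\star}(k_1,\dots,k_{r-1},k_r)=(-1)^{r-1}\zeta^{\star}(\bfk)$, its companion regularized factor being the empty product $1$, while the $j=r-1$ summand of the third sum is $(-1)^{\,k_r+k_1+\cdots+k_{r-1}}\ze(k_1,\dots,k_{r-1},k_r)=(-1)^{|\bfk|}\ze(\bfk)$. Expanding $\zeta^{\star}(\bfk)=\ze(\bfk)+(\text{MZVs of depth}<r)$, the coefficient of the isolated value $\ze(\bfk)$ is
\[
(-1)^{r-1}+(-1)^{|\bfk|}\;=\;(-1)^{|\bfk|}-(-1)^{r}.
\]
Every remaining summand of the four sums in Theorem~\ref{Shuffle-Regularization} specializes, at $T=0$, to either a product of two or more elements of the MZV algebra, each of positive weight, or a single multiple zeta value of depth strictly less than $r$: the non-full-length summands of the first and third sums split as a product $\zeta^{\star}(\cdot)\,\zeta_{\shuffle}^{0}(\cdot)$ (or $\ze(\cdot)\,\zeta_{\shuffle}^{0}(\cdot)$) with both factors of depth $<r$; the second sum contributes $\zeta_{\shuffle}^{0}(\cdot)\,\zeta(2k)$, a product for $k\geq1$ and a single MZV of depth $\leq r-1$ for $k=0$ (where $\zeta(0)=-1/2$); and each summand of the fourth sum is $\zeta_{\shuffle}^{0}(\cdot)\,\ze(\cdot)\,\zeta(2k)$, or, for $k=0$, either a product of two shorter MZVs or a single MZV of depth $\leq r-1$. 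In particular, no further copy of $\ze(\bfk)$ itself survives.

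Collecting everything, Theorem~\ref{Shuffle-Regularization} becomes, after $T=0$,
\[
\bigl((-1)^{|\bfk|}-(-1)^{r}\bigr)\ze(\bfk)=-\Phi,
\]
with $\Phi$ in the $\Q$-span of products of MZVs together with MZVs of depth $<r$. This is exactly the first assertion, $\bigl((-1)^{k_1+\cdots+k_r}-(-1)^{r}\bigr)\ze(\bfk)\equiv0$ modulo products (and lower depth); when $k_1+\cdots+k_r\equiv r\pmod 2$ the coefficient vanishes and the statement is the trivial $0\equiv0$, and for $r=1$ it is Euler's $\ze(2m)\in\Q\pi^{2m}=\Q\,\ze(2)^{m}$. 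When $k_1+\cdots+k_r\not\equiv r\pmod 2$ the coefficient equals $\pm2\neq0$, so dividing through writes $\ze(\bfk)$ as a $\Q$-linear combination of products of lower-weight MZVs and of MZVs of depth $<r$, which is the claimed parity reduction.

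The part that demands genuine care --- and what I see as the main obstacle --- is the term-by-term accounting in the second paragraph: one must confirm that, with this specific index and this specific $q$, the only contribution to the depth-$r$ part of Theorem~\ref{Shuffle-Regularization} is $\bigl((-1)^{r-1}+(-1)^{|\bfk|}\bigr)\ze(\bfk)$, i.e.\ that the stuffle/shuffle structure of the four sums, the star-to-nonstar conversion, and the fact that $\zeta_{\shuffle}^{T}$ of a non-admissible tail is a $T$-polynomial with depth-bounded MZV coefficients together confine everything else to decomposables and to depth $<r$. Once this is in place the nonvanishing of the coefficient under a parity mismatch is immediate, and both statements of the corollary follow.
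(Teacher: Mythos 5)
Your proposal is correct and follows the same route as the paper, whose proof is simply the (unelaborated) citation of Theorem \ref{Stuffle-Regularization}: you apply the regularized parity identity with $q=k_r$ and the reversed truncated index, isolate the full-length terms $(-1)^{r-1}\zeta^{\star}(\bfk)+(-1)^{|\bfk|}\zeta(\bfk)$, and observe that the coefficient of $\zeta(\bfk)$ is $(-1)^{|\bfk|}-(-1)^r$ while all remaining terms are decomposable or of depth $<r$. The choice of the shuffle rather than the stuffle version is immaterial (they are equivalent via $\rho$), so your write-up is essentially the detailed version of the paper's one-line proof.
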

\begin{proof}
    This result comes from Theorem \ref{Stuffle-Regularization}.
\end{proof}

Similarly, by evaluating the residue of the contour integral \eqref{contourintegral-two} and applying a regularization process analogous to the one described above, one can derive parity formulas for the regularization of certain alternating multiple zeta values and multiple polylogarithms (Analytic continuation is required). The reader is encouraged to attempt this independently.

The method presented in this paper can also be applied to investigate parity formulas for related variants of multiple zeta values, such as the arbitrary cyclotomic versions of Hoffman's multiple $t$-values. In the next section, we provide parity formulas for the double shuffle regularization of cyclotomic multiple zeta values, derived using an approach analogous to the proofs of the two theorems stated above. Since the derivation follows essentially the same procedure, the details are omitted here. Interested readers are encouraged to work through the steps themselves.

It should be emphasized that in their paper \cite{CharltonHoffman-MathZ2025}, the first author of this paper and Hoffman applied the method of truncated series from Goncharov's paper \cite{Goncharov2001} by defining truncated multiple $t$-values and investigating the functional equations of their generating functions, thereby establishing a symmetry theorem for regularized multiple $t$-values \cite{H2019} (more general symmetry results for cyclotomic multiple $t$-values can also be derived). By employing analogous antipode relations (see \cite{XuZhao2020d}), a parity theorem for regularized multiple $t$-values can be similarly obtained. Moreover, we believe that the truncated series approach developed in Goncharov's paper \cite{Goncharov2001} can be extended to study symmetry or parity results for alternating multiple $M$-values (AMMVs) \cite{XuYanZhao2024}. We plan to pursue this line of research in a subsequent paper.

\section{ Parity Theorem for Cyclotomic Multiple Zeta Values}
In this finial section, we extend the conclusions obtained in the previous sections to the case of cyclotomic multiple zeta values. Since the proofs are completely analogous, we only present the results while omitting the proofs.

\begin{defn}
    For a positive multi-index $\bfk=(k_1,\ldots,k_r)$ and $\bfx=(x_1,\ldots,x_r)\ (x_i\in\mathbb{C})$, let $m_1,m_2\in\mathbb{Z}\cup\{-\infty,+\infty\}$ satisfy $m_1<m_2$. We define the following notations
    \begin{align*}
        \Li_{(m_1,m_2)}(\bfk;\bfx;s)&:=\sum_{m_1<n_1<\cdots<n_r<m_2}\frac{x_1^{n_1}\cdots x_r^{n_r}}{(n_1+s)^{k_1}\cdots (n_r+s)^{k_r}},\\
        \Li_{(m_1,m_2]}(\bfk;\bfx;s)&:=\sum_{m_1<n_1<\cdots<n_r\leqslant m_2}\frac{x_1^{n_1}\cdots x_r^{n_r}}{(n_1+s)^{k_1}\cdots (n_r+s)^{k_r}}.
    \end{align*}
    Similarly, we also define the following notations
     \begin{align*}
        \Li^{\star}_{(m_1,m_2)}(\bfk;\bfx;s)&:=\sum_{m_1<n_1\leqslant \cdots\leqslant n_r<m_2}\frac{x_1^{n_1}\cdots x_r^{n_r}}{(n_1+s)^{k_1}\cdots (n_r+s)^{k_r}},\\
        \Li^{\star}_{(m_1,m_2]}(\bfk;\bfx;s)&:=\sum_{m_1<n_1\leqslant \cdots\leqslant n_r\leqslant m_2}\frac{x_1^{n_1}\cdots x_r^{n_r}}{(n_1+s)^{k_1}\cdots (n_r+s)^{k_r}}.
    \end{align*}
    To ensure the convergence of the series, when $m_2=+\infty$ (respectively, $m_1=-\infty$), we require $k_r>1$ (respectively, $k_1>1$).
\end{defn}

In particular, for $\bfk:=(k_1,\ldots,k_r)\in (\Z_{>0})^r$ with $s\in \mathbb{C}\setminus \Z_{<0}$, the \emph{multiple Hurwitz zeta function} is defined by
\begin{align}
\zeta(\bfk;s)&:=\Li_{(0,+\infty)}(\bfk;\{1\}_r;s)=\sum_{0<n_1<\cdots<n_r} \frac1{(n_1+s)^{k_1}\cdots(n_r+s)^{k_r}},
\end{align}
and \emph{multiple polylogarithm function (of multi-variable)} is defined by
\begin{align*}
    \Li(\bfk;\bfx)&:=\Li_{(0,+\infty)}(\bfk;\bfx;0)=\sum_{0<n_1<\cdots<n_r} \frac{x_1^{n_1}\cdots x_r^{n_r}}{n_1^{k_1}\cdots n_r^{k_r}}.
\end{align*}
where $\bfx=(x_1,\ldots,x_r)\in\mathbb{C}^r,|x_i\cdots x_r|<1,1\leqslant i\leqslant n$. The multiple polylogarithm function can be analytically continued to a multi-valued meromorphic function on $\mathbb{C}^r$ (see \cite{Zhao2007d}). In general, let $\bfk=(k_1,\ldots,k_r)\in(\Z_{>0})^r$ and $\bfmu=(\mu_1,\dotsc,\mu_r)$, where $\mu_1,\dotsc,\mu_r$ are $N$th roots of unity. We can obtain the \emph{colored MZVs} of level $N$ by
\begin{equation}
\Li({\bfk};\bfmu)=\sum_{0<n_1<\cdots<n_r}
\frac{\mu_1^{n_1}\dots \mu_r^{n_r}}{n_1^{k_1} \dots n_r^{k_r}}\in \CC,
\end{equation}
which converge if $(k_r,\mu_r)\ne (1,1)$ (see \cite{YuanZh2014a} and \cite[Ch. 15]{Z2016}), in which case we call $({\bfk};\bf\mu)$ \emph{admissible}.

\begin{pro}
    Let $m_1,m_2\in\Z\cup\{-\infty,+\infty\},\ m_1<m_2$ and $n\in\mathbb{Z}$, we have the following identities
    \begin{itemize}
        \item[(1)] (Translation)
        \begin{align*}
        \Li_{(m_1,m_2)}(k_1,\ldots,k_r;x_1,\ldots,x_r;s)=(x_1\cdots x_r)^{-n}\Li_{(m_1+n,m_2+n)}(k_1,\ldots,k_r;x_1,\ldots,x_r;s-n).
    \end{align*}

    \item[(2)] (Decomposition) If $m_1<n<m_2$, then we have
    \begin{align*}
        &\Li_{(m_1,m_2)}(k_1,\ldots,k_r;x_1,\ldots,x_r;s)\\
        &=\sum_{j=0}^r\Li_{(m_1,n]}(k_1,\ldots,k_j;x_1,\ldots,x_j;s)\Li_{(n,m_2)}(k_{j+1},\ldots,k_r;x_{j+1},\ldots,x_r;s)\\
        &=\sum_{j=0}^r\Li_{(m_1,n)}(k_1,\ldots,k_j;x_1,\ldots,x_j;s)\Li_{(n,m_2)}(k_{j+1},\ldots,k_r;x_{j+1},\ldots,x_r;s)\\
        &\quad+\sum_{j=1}^r\frac{x_j^n}{(s+n)^{k_j}}\Li_{(m_1,n)}(k_1,\ldots,k_{j-1};x_1,\ldots,x_{j-1};s)\Li_{(n,m_2)}(k_{j+1},\ldots,k_r;x_{j+1},\ldots,x_r;s).
    \end{align*}

    \item[(3)] (Reflection)
    $$\Li_{(m_1,m_2)}(k_1,\ldots,k_r;x_1,\ldots,x_r;s)=(-1)^{|\bfk|}\Li_{(-m_2,-m_1)}(k_r,\ldots,k_1;x_r^{-1},\ldots,x_1^{-1};-s).$$

    \item[(4)] (Antipode identity)
    $$\sum_{j=0}^r(-1)^j\Li^{\star}_{(m_1,m_2)}(k_j,\ldots,k_1;x_j,\ldots,x_1;s)\Li_{(m_1,m_2)}(k_{j+1},\ldots,k_r;x_{j+1},\ldots,x_r;s)=0.$$

    \item[(5)] (Truncation) If $0<m_1<m_2$, then we have
    \begin{align*}
        &\Li_{(m_1,m_2)}(k_1,\ldots,k_r;x_1,\ldots,x_r;s)\\
        &=\sum_{j=0}^{r} (-1)^j \Li^\star_{(0,m_1]}(k_{j},\ldots,k_1;x_{j},\ldots,x_1;s)\cdot\Li_{(0,m_2)}(k_{j+1},k_{j+2},\ldots,k_r;x_{j+1},\ldots,x_r;s).
    \end{align*}

\item[(6)] (Expansion) If $m_1<m_2\leqslant0$ or $0\leqslant m_1<m_2$, then we have
\begin{align*}
    &\Li_{(m_1,m_2)}(k_1,\ldots,k_r;x_1,\ldots,x_r;s)\\
    &=\sum_{m=0}^{\infty}\left(\sum_{|\bfn|=m}\prod_{l=1}^r\binom{-k_l}{n_l}\Li_{(m_1,m_2)}(k_1+n_1,\ldots,k_r+n_r;x_1,\ldots,x_r)\right)s^m,
\end{align*}
where $\bfn=(n_1,\ldots,n_r)\in(\mathbb{Z}_{\geqslant0})^r$ and $|s|<1$.
    \end{itemize}
\end{pro}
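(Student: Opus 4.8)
The plan is to replay the proof of Proposition~\ref{prop} essentially line by line, since each of the six assertions here is the colored counterpart of the corresponding assertion there. The only genuinely new ingredient is the monomial weight $x_1^{n_1}\cdots x_r^{n_r}$ attached to each summand, so almost all of the work consists in tracking how this factor transforms under each manipulation of the summation range; no new analytic phenomenon appears.

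First I would handle (1): writing $n_l+s=(n_l+n)+(s-n)$ and substituting $n_l'=n_l+n$ turns $m_1<n_1<\cdots<n_r<m_2$ into $m_1+n<n_1'<\cdots<n_r'<m_2+n$, while $x_l^{n_l}=x_l^{-n}x_l^{n_l'}$; pulling the constants $x_l^{-n}$ out of the sum produces the prefactor $(x_1\cdots x_r)^{-n}$. Next (2), by partitioning the range according to the largest index $n_j\le n$ (so $n_{j+1}>n$) and then further according to whether the smallest index exceeding $n$ equals $n$ or is strictly larger; the ``middle'' term contributes the factor $x_j^{n}/(s+n)^{k_j}$, exactly as in the stated formula. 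Then (3), via the substitution $n_l\mapsto -n_l$, which sends $1/(n_l+s)^{k_l}$ to $(-1)^{k_l}/(-n_l-s)^{k_l}$ and $x_l^{n_l}$ to $(x_l^{-1})^{-n_l}$; after reversing the order of the now-decreasing indices and relabelling one arrives at $(-1)^{|\bfk|}\Li_{(-m_2,-m_1)}(k_r,\ldots,k_1;x_r^{-1},\ldots,x_1^{-1};-s)$, the reversal of the summation order being responsible both for the reversed index string $(k_r,\ldots,k_1)$ and for the reversed weight string $(x_r^{-1},\ldots,x_1^{-1})$, whose entries are moreover inverted.

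For (4) and (5) I would argue by induction on the depth $r$, following the bookkeeping of Proposition~\ref{prop}: the base case $r=1$ is immediate from the definitions, and in the inductive step (4) is obtained by applying the depth-$(r-1)$ antipode identity to the first $r-1$ arguments together with decomposition~(2), while (5) follows by decomposing $\Li_{(0,m_2)}(\bfk;\bfx;s)$ at the point $m_1$, isolating the $j=0$ term, applying the inductive truncation formula to the lower-depth factors, and collapsing the resulting double sum by the antipode identity~(4). At each step one checks that the weight variables travel with their indices, so that, e.g., $\Li^{\star}_{(0,m_1]}(k_i,\ldots,k_1;x_i,\ldots,x_1;s)$ carries precisely the first $i$ of the $x$'s, reversed. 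Finally (6) comes from the binomial series $1/(n_l+s)^{k_l}=\sum_{j_l\ge0}\binom{-k_l}{j_l}s^{j_l}/n_l^{k_l+j_l}$, valid when $|s|<1$ on the index ranges $m_1<m_2\le 0$ or $0\le m_1<m_2$ (where $|s/n_l|<1$); multiplying the $r$ expansions and collecting by the total $s$-degree $m=j_1+\cdots+j_r$ yields the claimed Taylor coefficients, with the monomial $x_1^{n_1}\cdots x_r^{n_r}$ left untouched by the expansion.

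The step I expect to be the main obstacle is the reflection~(3), where one must simultaneously reverse the order of the summation indices, invert the weight variables, and reverse the weight string, and then, in (4) and (5), confirm that these two operations (reversal and inversion) interact correctly with the inductive hypotheses and with the half-open ranges appearing in the decomposition and antipode identities. Beyond this purely notational care nothing new is required, since the convergence of every series in play is guaranteed by the conventions already imposed in the definition ($k_r>1$ when $m_2=+\infty$, $k_1>1$ when $m_1=-\infty$), so the argument is no more delicate than that of Proposition~\ref{prop}.
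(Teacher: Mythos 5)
Your proposal is correct and matches the paper's intent exactly: the paper omits the proof of this colored version, stating it is completely analogous to Proposition~\ref{prop}, and your argument is precisely that proof replayed with the monomial weights $x_1^{n_1}\cdots x_r^{n_r}$ tracked through each substitution (yielding the factor $(x_1\cdots x_r)^{-n}$ in translation, $x_j^n$ in decomposition, and the inverted, reversed weight string in reflection). No gap to report.
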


We obtain that $\Li_{(m_1,m_2)}(\bfk;\bfx;s)$ has the following Laurent expansion or Taylor expansion at integer points.

\begin{thm}\label{Taextsion-Li}
Let $\bfk=(k_1,\ldots,k_r)$ be a positive multi-index, $\bfx=(x_1,\ldots,x_r)$. Let $m_1,m_2\in\mathbb{Z}\cup\{-\infty,+\infty\},\ m_1<m_2$. If $n\in\mathbb{Z}_{\geqslant-m_1}\cup\mathbb{Z}_{\leqslant-m_2}$, and $|s-n|<1$, then we have
\begin{align*}\label{Taylor-expansion-mhzf}
\Li_{(m_1,m_2)}(\bfk;\bfx;s)=\sum_{m=0}^\infty \left(\sum_{|\bfn|=m} \prod\limits_{l=1}^r \binom{-k_l}{n_l}\cdot(x_1\cdots x_r)^{-n}\Li_{(n+m_1,n+m_2)}(\bfk+\bfn;\bfx)\right)(s-n)^m,
\end{align*}
where $\bfn:=(n_1,\ldots,n_r)\in(\Z_{\geqslant0})^r$ and $|\bfn|:=n_1+\cdots+n_r$.
\end{thm}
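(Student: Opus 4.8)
The plan is to follow the same two-step reduction used for Theorem~\ref{thm-taylor-expansion-mhzf}: first translate the summation window so that the expansion point becomes $s=0$, then apply the Expansion identity (part~(6) of the cyclotomic analogue of Proposition~\ref{prop} stated just above). The only new ingredient compared with the non-cyclotomic case is the scalar prefactor $(x_1\cdots x_r)^{-n}$ produced by the translation rule for $\Li_{(m_1,m_2)}$, and it simply rides along as a constant multiple.

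First I would apply part~(1) (Translation) with shift $n$,
\[
\Li_{(m_1,m_2)}(\bfk;\bfx;s)=(x_1\cdots x_r)^{-n}\,\Li_{(n+m_1,\,n+m_2)}(\bfk;\bfx;s-n),
\]
and then check that the shifted window $(n+m_1,\,n+m_2)$ satisfies the hypothesis $n+m_2\leqslant 0$ or $0\leqslant n+m_1$ of part~(6): if $n\in\Z_{\geqslant -m_1}$ then $n+m_1\geqslant 0$, while if $n\in\Z_{\leqslant -m_2}$ then $n+m_2\leqslant 0$; in both cases $n+m_1<n+m_2$ holds automatically since $m_1<m_2$. Because $|s-n|<1$, part~(6) (with its variable played by $s-n$) then yields
\[
\Li_{(n+m_1,\,n+m_2)}(\bfk;\bfx;s-n)=\sum_{m=0}^\infty\Bigl(\sum_{|\bfn|=m}\prod_{l=1}^r\binom{-k_l}{n_l}\,\Li_{(n+m_1,\,n+m_2)}(\bfk+\bfn;\bfx)\Bigr)(s-n)^m,
\]
and multiplying through by $(x_1\cdots x_r)^{-n}$ and absorbing it into the coefficient of $(s-n)^m$ reproduces exactly the claimed formula.

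The one point that deserves care is the bookkeeping when $m_1=-\infty$ or $m_2=+\infty$. In that case one of the two pieces of the hypothesis $n\in\Z_{\geqslant -m_1}\cup\Z_{\leqslant -m_2}$ is empty, so one must additionally confirm that the convergence requirement on $\Li$ (namely $k_1>1$ if the left endpoint is $-\infty$, and $k_r>1$ if the right endpoint is $+\infty$) survives the translation — it does, since translation alters neither $\bfk$ nor which endpoint is infinite — and that part~(6) is indeed stated under exactly this convention. I do not anticipate a genuine obstacle here: once the cyclotomic Proposition is available (in particular its part~(6), proved verbatim as Proposition~\ref{prop}(6) while carrying the monomials $x_l^{n_l}$ along), the present theorem is an immediate consequence of translation plus the term-by-term expansion of $(1+s/n_l)^{-k_l}$ already built into part~(6).
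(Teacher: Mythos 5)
Your proposal is correct and matches the paper's intended argument: the paper omits the proof as ``completely analogous'' to Theorem~\ref{thm-taylor-expansion-mhzf}, which is proved exactly by the translation identity followed by the expansion identity (part~(6)), with the only new feature being the constant factor $(x_1\cdots x_r)^{-n}$ that you correctly carry through. Your verification of the shifted-window hypotheses and the convergence convention at infinite endpoints is the same bookkeeping the paper relies on.
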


In particular, if $n=0$ and $|s|<1$, then
\begin{align}
\Li_{(m_1,m_2)}(\bfk;\bfx;s)=\sum_{m=0}^{\infty}\left( \sum_{|\bfn|=m} \prod\limits_{l=1}^r \binom{-k_l}{n_l} \Li_{(m_1,m_2)}(\bfk;\bfx)\right)s^m.
\end{align}

\begin{thm}\label{Laextsion-Li}
Let $\bfk=(k_1,\ldots,k_r)$ be a positive multi-index, $\bfx=(x_1,\ldots,x_r)$. Let $m_1,m_2\in\mathbb{Z}\cup\{-\infty,+\infty\},\ m_1<m_2$. If $-m_2<n<-m_1$, and $|s-n|<1$, then we have
\begin{align}
\Li_{(m_1,m_2)}(\bfk;\bfx;s)=(x_1\cdots x_r)^{-n}\left(\sum_{m=0}^{\infty}a_m(s-n)^m+\sum_{j=1}^r\sum_{m=0}^{\infty}b_{m,j}(s-n)^{m-k_j}\right),
\end{align}
where
\begin{align*}
    a_m&:=\sum_{|\bfn|=m}\prod_{l=1}^r\binom{-k_l}{n_l}\sum_{j=0}^r\Li_{(n+m_1,0)}(\bfk_{[1,j]}+\bfn_{[1,j]};\bfx_{[1,j]})\Li_{(0,n+m_2)}(\bfk_{(j,r]};\bfx_{(j,r]}),\\
    b_{m,j}&:=\sum_{|\bfn|-n_j=m}\prod_{l\neq j}\binom{-k_l}{n_l}\Li_{(n+m_1,0)}(\bfk_{[1,j)}+\bfn_{[1,j)};\bfx_{[1,j)})\Li_{(0,n+m_2)}(\bfk_{(j,r]}+\bfn_{(j,r]};\bfx_{(j,r]})
\end{align*}
and $\bfn:=(n_1,\ldots,n_r)\in(\Z_{\geqslant0})^r$. The definitions of $\bfk_{[1,j]},\bfk_{(j,r]},\bfk_{[1,j)}$ are similar to the definition \ref{def2.1}. We have the following result.
\end{thm}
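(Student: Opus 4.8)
The plan is to transcribe, step for step, the proof of Theorem~\ref{thm-Laurent-expansion-mhzf}, simply carrying the monomial weights $\bfx$ through each manipulation. First I would apply the translation identity (part~(1) of the colored Proposition) to move the pole at $s=n$ to the origin; this produces the prefactor $(x_1\cdots x_r)^{-n}$, namely
\[
\Li_{(m_1,m_2)}(\bfk;\bfx;s)=(x_1\cdots x_r)^{-n}\,\Li_{(n+m_1,n+m_2)}(\bfk;\bfx;s-n).
\]
Since $-m_2<n<-m_1$ forces $n+m_1<0<n+m_2$, the origin lies strictly between the two summation bounds on the right-hand side, so the splitting and expansion steps below are all legitimate.

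Next I would apply the decomposition identity (part~(2)) at the interior point $0$. Because the splitting point is $0$, every weight $x_j^{0}=1$, so no additional monomial factors are introduced and the pole contributions take the clean form $1/(s-n)^{k_j}$:
\begin{align*}
\Li_{(n+m_1,n+m_2)}(\bfk;\bfx;s-n)&=\sum_{j=0}^r\Li_{(n+m_1,0)}(\bfk_{[1,j]};\bfx_{[1,j]};s-n)\,\Li_{(0,n+m_2)}(\bfk_{(j,r]};\bfx_{(j,r]};s-n)\\
&\quad+\sum_{j=1}^r\frac{\Li_{(n+m_1,0)}(\bfk_{[1,j)};\bfx_{[1,j)};s-n)\,\Li_{(0,n+m_2)}(\bfk_{(j,r]};\bfx_{(j,r]};s-n)}{(s-n)^{k_j}}.
\end{align*}
Every factor on the right now has summation bounds that are either both $\leqslant 0$ or both $\geqslant 0$, so the expansion identity (part~(6)) applies to each of them, turning it into a power series in $s-n$ convergent for $|s-n|<1$.

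Finally I would multiply these power series together (Cauchy product), reading off the coefficient of $(s-n)^m$ from the first (holomorphic) sum and the coefficient of $(s-n)^{m-k_j}$ from the $j$-th pole term; collecting the binomial products $\prod_{l}\binom{-k_l}{n_l}$, with the $j$-th factor omitted in the second sum because there the $j$-th index is not shifted, reproduces the formulas for $a_m$ and $b_{m,j}$, the bookkeeping being identical to that in Theorem~\ref{thm-Laurent-expansion-mhzf}. There is no genuine obstacle: the statement is the verbatim colored analogue of Theorem~\ref{thm-Laurent-expansion-mhzf}, and the argument is a routine transcription. The points needing care are purely clerical --- verifying the hypotheses of parts~(2) and~(6) at each stage (they hold since $0$ is strictly interior to $(n+m_1,n+m_2)$, and each truncated series is a finite sum when the relevant bound is finite and admissible when it is $\pm\infty$), tracking the index shifts $\bfk\mapsto\bfk+\bfn$ correctly, and keeping the sub-tuple notation $\bfx_{[1,j]},\bfx_{[1,j)},\bfx_{(j,r]}$ consistent with the way the decomposition splits the argument vector.
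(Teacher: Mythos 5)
Your proposal is correct and takes essentially the same approach the paper intends: the paper omits this proof as ``completely analogous'' to Theorem~\ref{thm-Laurent-expansion-mhzf}, and your argument is exactly that transcription --- translation giving the prefactor $(x_1\cdots x_r)^{-n}$, decomposition at the interior point $0$ (where the weights $x_j^0=1$ disappear), expansion of each factor via part~(6), and a Cauchy product to read off $a_m$ and $b_{m,j}$. Note only that your bookkeeping produces $\Li_{(0,n+m_2)}(\bfk_{(j,r]}+\bfn_{(j,r]};\bfx_{(j,r]})$ inside $a_m$, i.e.\ with the shift $+\bfn_{(j,r]}$ that is present in Theorem~\ref{thm-Laurent-expansion-mhzf} but dropped (evidently a typo) in the printed statement of Theorem~\ref{Laextsion-Li}.
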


From now on, we always assume that $m_1,m_2\in\mathbb{Z}\cup\{-\infty,+\infty\},\ m_1<m_2$, $\bfk=(k_1,\ldots,k_r)$ be a positive multi-index, and $\bfmu=(\mu_1,\ldots,\mu_r)$, where $\mu_1,\cdots,\mu_r$ are $N$-th roots of unity.

By considering the contour integral
\begin{align}\label{contourintegral-three}
\lim_{R\rightarrow \infty}\oint_{C_R} \frac{\pi \cot(\pi s)\Li_{(m_1,m_2)}(\bfk;\bfmu;s)}{s^q}ds=0
\end{align}
and utilizing the series expansions from Theorems \ref{Taextsion-Li} and \ref{Laextsion-Li} to compute its residue, then applying a similar regularization procedure, we obtain the following parity formulas for the regularization of cyclotomic multiple zeta values.

\begin{thm}(Stuffle Regularization)
    For $q\in\Z_{>0}$ and $\bfk=(k_1,\ldots,k_r)\in (\Z_{>0})^r$, $\bfmu=(\mu_1,\ldots,\mu_r)$, with $(q,\mu_1\cdots\mu_r)\neq(1,1)$, we have
\begin{align*}
    &\sum_{j=0}^r(-1)^j\Li^{\star}(\overleftarrow{\bfk_{[1,j]}},q;\overleftarrow{\bfmu_{[1,j]}},(\mu_1\cdots \mu_r)^{-1})\Li_*^T(\bfk_{(j,r]};\bfmu_{(j,r]})\\
    &-2\sum_{2k+m=q}\left(\sum_{|\bfn|=m} \prod\limits_{l=1}^r \binom{-k_l}{n_l} \Li_*^T(\bfk+\bfn;\bfmu)\right)\zeta(2k)\\
    &+(-1)^{q+|\bfk_{[1,j]}|}\sum_{j=0}^r\Li(\overleftarrow{\bfk_{[1,j]}},q;\overleftarrow{\bfmu_{[1,j]}},\mu_1\cdots \mu_r)\Li_*^T(\bfk_{(j,r]};\bfmu_{(j,r]})\\
&\quad-2\sum_{j=1}^r(-1)^{q+|\bfk_{[1,j)}|}\sum_{2k+m= k_j} \left(\sum_{|\bfn|=m}  \binom{-q}{n_j}\prod_{l\neq j}\binom{-k_l}{n_l}\Li_*^T(\bfk_{(j,r]}+\bfn_{(j,r]};\bfmu_{(j,r]})
\atop\times(-1)^{|\bfn_{[1,j]}|}\Li(\overleftarrow{\bfk_{[1,j)}}+\overleftarrow{\bfn_{[1,j)}},q+n_j;\overleftarrow{\bfmu_{[1,j)}},\mu_1\cdots \mu_r)\right)\zeta(2k)\\
&=0,
\end{align*}
where $\bfn=(n_1,\ldots,n_r)\in(\mathbb{Z}_{\geqslant0})^r$ .
\end{thm}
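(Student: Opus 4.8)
The plan is to run the argument of Section~4 essentially word for word, with $\zeta_{(m_1,m_2)}(\bfk;s)$ replaced by the cyclotomic function $\Li_{(m_1,m_2)}(\bfk;\bfmu;s)$ and with the expansions of Theorems~\ref{Taextsion-Li} and \ref{Laextsion-Li} used in place of Theorems~\ref{thm-taylor-expansion-mhzf} and \ref{thm-Laurent-expansion-mhzf}. First I would apply Lemma~\ref{lem-redisue-thm} to the vanishing integral \eqref{contourintegral-three}: setting $G_q(\bfk;\bfmu;s):=\pi\cot(\pi s)\Li_{(m_1,m_2)}(\bfk;\bfmu;s)/s^q$ and taking $m_1,m_2$ finite, the finite sum $\Li_{(m_1,m_2)}(\bfk;\bfmu;s)$ is $O(|s|^{-|\bfk|})$ on the large circles avoiding the integers, so the integrand is $O(|s|^{-|\bfk|-q})$ there, $\oint_{C_R}G_q\,\de s\to0$, and the total residue vanishes. (This already works for $q\ge1$; the relaxation from $q>1$ is available only on the right-hand side, since $\Li^\star(\dots,q;\dots,(\mu_1\cdots\mu_r)^{-1})$ converges precisely when $(q,\mu_1\cdots\mu_r)\ne(1,1)$.)

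Next I would classify the poles---all at integers---exactly as in the proofs of Theorems~\ref{parity1} and \ref{parity2}. At $n\le-m_2$ or $n\ge-m_1$ with $n\ne0$ one has a simple pole whose residue, by Theorem~\ref{Taextsion-Li} together with translation, is $(\mu_1\cdots\mu_r)^{-n}\Li_{(n+m_1,n+m_2)}(\bfk;\bfmu)/n^q$. At $n=0$ there is a pole of order $q+1$, and its residue is obtained by convolving the Taylor/Laurent expansions with the even part of $\pi\cot(\pi s)$ via \eqref{expansion-one-sine} (with $\zeta(0)=-1/2$ absorbing the leading coefficient), yielding the $\zeta(2k)$-terms. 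For $-m_2<n<-m_1$ the poles come from the intrinsic singularities of $\Li_{(m_1,m_2)}(\bfk;\bfmu;s)$, and their residues are read off from Theorem~\ref{Laextsion-Li}. Summing these to zero, then specializing $m_1=0$, $m_2=M$ with $M$ a large integer, I would rewrite the $n\ge1$ residues by the cyclotomic truncation identity (the analogue of Proposition~\ref{prop}(5)) and the $-M<n<0$ residues by the cyclotomic reflection identity (the analogue of Proposition~\ref{prop}(3), which sends each $\mu_i$ to $\mu_i^{-1}$, reverses the tuple and emits a sign $(-1)^{|\bfk|}$); this produces the cyclotomic counterpart of Corollary~\ref{thm-residue-contour-one}, an exact identity at finite level $M$. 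The factor $(\mu_1\cdots\mu_r)^{-n}$ from Theorem~\ref{Taextsion-Li} is exactly what attaches the extra root of unity $(\mu_1\cdots\mu_r)^{\pm1}$ to the new argument $q$ in the final formula (power $-1$ for $n>0$, power $+1$ for $n<0$ after reflection).

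Finally I would let $M\to\infty$ with the stuffle regularization parameter $T$ equal to the truncated harmonic number $\zeta_{(0,M)}(1)$. Using the isomorphism $(\Q\langle Y_N\rangle,*)\cong(\Q\langle Y_N\rangle^0,*)[y_{1,1}]$ of the level-$N$ quasi-shuffle algebra, every argument tuple $\Li_{(0,n+M)}(\bfk_{(j,r]};\bfmu_{(j,r]})$ is written as a polynomial in the harmonic-number divergence with admissible-word coefficients, after which the estimates of Lemmas~\ref{lemma-1}--\ref{lem-M} and Theorems~\ref{thm-star}, \ref{thm-M} go through with only cosmetic changes: because $|\mu_i|=1$, the tuple $\{1\}_r$ still dominates every series in absolute value, so each truncated sum converges to the corresponding stuffle-regularized cyclotomic value $\Li_*^T$ with error $O(\ln^t(M)/\sqrt{M})$, and the tail $\sum_{n\le-M}$ vanishes by the analogue of Lemma~\ref{lem-M}. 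Collecting the four resulting groups of terms gives the stated identity; the shuffle counterpart would then follow, as in Theorem~\ref{Shuffle-Regularization}, by applying the regularization comparison map $\rho$.

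I expect the main obstacle to lie not in the analytic estimates---which are dominated by the already-treated case $|\mu_i|=1$---but in the algebraic bookkeeping. Concretely: (i) one must verify that the level-$N$ quasi-shuffle decomposition has genuinely admissible-word coefficients and is compatible with the finite truncations, so that $\Li_{(0,M)}(\bfk;\bfmu)=\Li_*^T(\bfk;\bfmu)$ for admissible $(\bfk;\bfmu)$ and the limits may be taken termwise; and (ii) one must track carefully how the sign $(-1)^{q+|\bfk_{[1,j)}|+|\bfn_{[1,j]}|}$ interacts with the conjugation $\bfmu\mapsto\bfmu^{-1}$ and with the appearance of $\mu_1\cdots\mu_r$ (rather than its inverse) across the $n>0$, $n=0$ and $n<0$ contributions, so that the reflected terms reassemble into exactly the $\Li$-type (not $\Li^\star$-type) summands displayed in the theorem.
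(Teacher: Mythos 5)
Your proposal follows exactly the route the paper indicates (and carries out in detail in Section 4 for the non-cyclotomic case): apply Lemma \ref{lem-redisue-thm} to the contour integral \eqref{contourintegral-three}, compute the residues at the integers via Theorems \ref{Taextsion-Li} and \ref{Laextsion-Li}, specialize $m_1=0$, $m_2=M$, rewrite with the cyclotomic truncation and reflection identities, and let $M\to\infty$ with $T=\zeta_{(0,M)}(1)$ using the level-$N$ quasi-shuffle decomposition and the analogues of Lemmas \ref{lemma-1}--\ref{lem-M}; this is precisely the argument the paper declares ``completely analogous'' and omits. The one point your sketch treats too lightly is the case $q=1$ (permitted here when $\mu_1\cdots\mu_r\neq1$), where the absolute-value estimates of the Section 4 lemmas do not apply verbatim (they explicitly use $q>1$) and the convergence and error bounds must instead exploit the cancellation from the factor $(\mu_1\cdots\mu_r)^{-n}$, e.g.\ by partial summation.
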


Similarly, we also have shuffle regularization (The regularized double shuffle theorem for cyclotomic multiple zeta values can be found in \cite[Thm. 13.3.9]{Z2016}).

\begin{thm}(Shuffle Regularization)
    For $q\in\Z_{>0}$ and $\bfk=(k_1,\ldots,k_r)\in (\Z_{>0})^r$, $\bfmu=(\mu_1,\ldots,\mu_r)$, with $(q,\mu_1\cdots\mu_r)\neq(1,1)$, we have
\begin{align*}
    &\sum_{j=0}^r(-1)^j\Li^{\star}(\overleftarrow{\bfk_{[1,j]}},q;\overleftarrow{\bfmu_{[1,j]}},(\mu_1\cdots \mu_r)^{-1})\Li_{\shuffle}^T(\bfk_{(j,r]};\bfmu_{(j,r]})\\
    &-2\sum_{2k+m=q}\left(\sum_{|\bfn|=m} \prod\limits_{l=1}^r \binom{-k_l}{n_l} \Li_{\shuffle}^T(\bfk+\bfn;\bfmu)\right)\zeta(2k)\\
    &+(-1)^{q+|\bfk_{[1,j]}|}\sum_{j=0}^r\Li(\overleftarrow{\bfk_{[1,j]}},q;\overleftarrow{\bfmu_{[1,j]}},\mu_1\cdots \mu_r)\Li_{\shuffle}^T(\bfk_{(j,r]};\bfmu_{(j,r]})\\
&\quad-2\sum_{j=1}^r(-1)^{q+|\bfk_{[1,j)}|}\sum_{2k+m= k_j} \left(\sum_{|\bfn|=m}  \binom{-q}{n_j}\prod_{l\neq j}\binom{-k_l}{n_l}\Li_{\shuffle}^T(\bfk_{(j,r]}+\bfn_{(j,r]};\bfmu_{(j,r]})
\atop\times(-1)^{|\bfn_{[1,j]}|}\Li(\overleftarrow{\bfk_{[1,j)}}+\overleftarrow{\bfn_{[1,j)}},q+n_j;\overleftarrow{\bfmu_{[1,j)}},\mu_1\cdots \mu_r)\right)\zeta(2k)\\
&=0,
\end{align*}
where $\bfn=(n_1,\ldots,n_r)\in(\mathbb{Z}_{\geqslant0})^r$ .
\end{thm}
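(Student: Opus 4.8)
The plan is to mirror the argument that establishes Theorems~\ref{parity1}--\ref{Shuffle-Regularization}, transplanting each step to the cyclotomic setting, and then to deduce the shuffle-regularized identity from its stuffle counterpart via the regularization isomorphism.

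First I would evaluate the contour integral~\eqref{contourintegral-three} for $m_1=0$, $m_2=M$ a large finite integer. Since $\Li_{(0,M)}(\bfk;\bfmu;s)$ is a finite sum of terms $\mu^n/(n+s)^k$, it tends to $0$ uniformly on the circles $|s|=\rho_k$ that avoid the integers, so the integrand is $O(s^{-q-1})$ there; hence the integral vanishes and the sum of all residues is zero. The residues at the integers $n$ fall into three regimes: $n\geq 0$ outside $(0,M)$, where the pole is simple and the residue is read off from the Taylor expansion of Theorem~\ref{Taextsion-Li}; $0<n<M$, where the pole has order at most $\max_j k_j$ and the residue is read off from the Laurent expansion of Theorem~\ref{Laextsion-Li}; and $n\leq 0$ (here $n=0$ contributes the $\zeta(2k)$ terms via the expansion~\eqref{expansion-one-sine} of $\pi\cot(\pi s)$, while $n<0$ is again simple and, after the reflection identity (part~(3) of the cyclotomic Proposition), becomes a convergent cyclotomic polylogarithm). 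Collecting these contributions — and, for the $n\ge 1$ part, applying the truncation identity (part~(5)) to express $\Li_{(0,n+M)}$ through $\Li^\star_{(0,n]}$ and $\Li_{(0,n+M)}$ — produces the cyclotomic analogue of Corollary~\ref{thm-residue-contour-one}.

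Next I would let $M\to\infty$. The reason the estimates of Lemmas~\ref{lemma-1}--\ref{lem-M} and Theorems~\ref{thm-star} and~\ref{thm-M} carry over without change is that $|\mu_i|=1$ and $|(\mu_1\cdots\mu_r)^{\pm n}|=1$, so after passing to absolute values every bound there — all ultimately governed by $\zeta^\star_{(0,n]}(\{1\}_r)<2^{r-1}(1+\ln n)^r$ — remains valid with the same logarithmic growth. Using the decomposition of the level-$N$ stuffle algebra, $(\Q\langle\text{level-}N\text{ words}\rangle,*)\cong(\Q\langle\text{admissible words}\rangle,*)[y_{(1;1)}]$ — in level $N$ the only divergent depth-one generator corresponds to the word $(1;1)$ — one expands $\Li_{(0,n+M)}(\bfk_{(j,r]};\bfmu_{(j,r]})$ as a $*$-polynomial in $\zeta_{(0,n+M)}(1)$ with admissible coefficients, replaces each admissible $\Li_{(0,n+M)}(w_i)$ by $\Li(w_i)$ and each $\zeta_{(0,n+M)}(1)$ by $T:=\zeta_{(0,M)}(1)$ up to errors $O(\ln^{t}M/\sqrt M)$, and converts the negative-index sums into convergent cyclotomic polylogarithms. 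Passing to the limit gives the stuffle-regularized identity (the preceding theorem), with regularization parameter $T$. The one genuinely new case, $q=1$, is covered because the hypothesis $(q,\mu_1\cdots\mu_r)\neq(1,1)$ forces $\Li^\star(q;(\mu_1\cdots\mu_r)^{-1})$ to be admissible (and keeps the contour integral convergent as above).

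Finally, applying the regularization isomorphism $\rho$ from the regularized double shuffle theorem for cyclotomic MZVs \cite[Thm.~13.3.9]{Z2016}, which satisfies $\rho(\Li_*^T(\bfk;\bfmu))=\Li_{\shuffle}^T(\bfk;\bfmu)$ and fixes convergent values, to the stuffle identity yields the asserted shuffle-regularized identity. I expect the main obstacle to be bookkeeping rather than conceptual: carrying the prefactors $(\mu_1\cdots\mu_r)^{\mp n}$ correctly through the translation, reflection and truncation steps so that the arguments of the limiting polylogarithms come out exactly as the stated consecutive products $\mu_1\cdots\mu_r$ and $(\mu_1\cdots\mu_r)^{-1}$, and verifying that the Laurent coefficients $a_m,b_{m,j}$ of Theorem~\ref{Laextsion-Li}, now decorated with $\bfx$-arguments, reassemble correctly after summation over $0<n<M$.
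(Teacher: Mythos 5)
Your proposal takes essentially the same route as the paper's (largely omitted) proof: derive the cyclotomic stuffle identity from the contour integral \eqref{contourintegral-three} with residues read off from Theorems \ref{Taextsion-Li} and \ref{Laextsion-Li}, pass to the limit $M\to\infty$ using the analogues of Lemmas \ref{lemma-1}--\ref{lem-M} and Theorems \ref{thm-star}, \ref{thm-M}, and then apply the regularization map $\rho$ of \cite[Thm.\ 13.3.9]{Z2016} exactly as in Theorem \ref{Shuffle-Regularization}. The one point to tighten is $q=1$: there your claim that the estimates carry over ``after passing to absolute values'' fails (sums like $\sum_n \ln^j(n)/n$ diverge), so the convergence and error bounds must instead come from partial (Abel) summation against the nontrivial root of unity $\mu_1\cdots\mu_r$ guaranteed by $(q,\mu_1\cdots\mu_r)\neq(1,1)$ --- a refinement the paper itself leaves implicit.
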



\medskip

{\bf Declaration of competing interest.}
The authors declares that they has no known competing financial interests or personal relationships that could have
appeared to influence the work reported in this paper.

{\bf Data availability.}
No data was used for the research described in the article.

{\bf Acknowledgments.} The authors would like to express their sincere gratitude to Steven Charlton for his valuable comments and discussions on this paper. Ce Xu is supported by the General Program of Natural Science Foundation of Anhui Province (Grant No. 2508085MA014). Ce Xu gratefully acknowledges the invitation from Professor Chengming Bai of Nankai University to the Chern Institute of Mathematics and from Professor Shaoyun Yi of Xiamen University to the Tianyuan Mathematical Center in Southeast China (TMSE). This work commenced during these visits. Jia Li is supported by the National Science Foundation of China (Grant No.12231001).
Jia Li would like to express gratitude to Professor Liang Xiao for the valuable visiting opportunity, as well as for the strong support and assistance provided during this research. Jia Li also extends thanks to the School of Mathematical Sciences at Peking University for the comfortable working environment.

\newpage



\begin{thebibliography}{99}

\bibitem{BorweinBrBr1997}
J.M.\ Borwein, D.M.\ Bradley and D.J.\ Broadhurst,
Evaluations of $k$-fold Euler/Zagier sums: a compendium of results for arbitrary $k$,
\emph{Electron.\ J.\ Combin.} \textbf{4}(2)(1997), R5.

\bibitem{Borwein-Girgensohn-1996}
J.M. Borwein and R. Girgensohn, Evaluation of triple Euler sums, \emph{Electronic J. Combin.} \textbf{3}(1996), R23.

\bibitem{Bouillot2014}
O. Bouillot, The algebra of multitangent functions, \emph{J. Algebra} \textbf{410}(2014), pp.\ 148-238.

\bibitem{BroadKre1997}
D.J. Broadhurst and D. Kreimer, D., Association of multiple zeta values with positive knots via Feynman diagrams up to 9 loops, Phys. Lett. B, 1997, 393(3/4), pp.\ 403-412.

\bibitem{BJ}
Jos\'e. Ignacio. Burgos Gil and Javier. Fres\'an, Multiple zeta values: from number to motives, http://javier.fresan.perso.math.cnrs.fr/mzv.pdf.


\bibitem{Charlton-MathAnn2025}
S. Charlton, On motivic multiple $t$-values, Saha's basis conjecture, and generators of alternating MZV's, \emph{Math. Ann.} \textbf{392}(2025), pp.\ 1995-2079.

\bibitem{CharltonHoffman-MathZ2025}
S. Charlton and M.E. Hoffman, Symmetry results for multiple $t$-values, \emph{Math. Z.} \textbf{309}(2025):75.

\bibitem{DSS2025}
A. Dixit, S. Sathyanarayana and N. Guru Sharan, Mordell-Tornheim zeta functions and functional equations for Herglotz-Zagier type functions, \emph{Adv, Math.} \textbf{473}(2025)110303.

\bibitem{Euler1}
L. Euler, ``Meditationes circa singulare serierum genus'', Novi Comm. Acad. Sci. Petropolitanae, 20 (1775), 140–186

\bibitem{Flajolet-Salvy}
P. Flajolet and B. Salvy, Euler sums and contour integral representations, \emph{Experiment. Math.} \textbf{7}(1)(1998), pp.\  15-35.

\bibitem{Goncharov2001}
A.B. Goncharov, Multiple polylogarithms and mixed Tate motives (2001), arXiv:math/0103059.

\bibitem{IKZ2006}
K. Ihara, M. Kaneko and D. Zagier, Derivation and double shuffle relations for multiple zeta values, \emph{Compos.\ Math.} \textbf{142}(2006), pp.\ 307--338.

\bibitem{Hirose2025}
M. Hirose, An explicit parity theorem for multiple zeta values via multitangent functions, \emph{Ramanujan J.} (2025)67:87.


\bibitem{H1992}
M.E. Hoffman, Multiple harmonic series, \emph{Pacific J.\ Math.} \textbf{152}(1992), pp.\ 275--290.

\bibitem{H2019}
M.E. Hoffman, An odd variant of multiple zeta values, \emph{Comm. Number Theory Phys.} \textbf{13}(2019), pp.\ 529--567.

\bibitem{KanekoYa2018}
M. Kaneko and S. Yamamoto, A new integral-series identity of multiple zeta values and regularizations, \emph{Selecta Math.} \textbf{24}(2018), pp.\ 2499--2521.


\bibitem{KanekoTs2019}
M. Kaneko and H. Tsumura, On multiple zeta values of level two, \emph{Tsukuba J.Math.} \textbf{44-2}(2020), pp.\ 213--234.

\bibitem{Li2019}
J. Li, The depth structure of motivic multiple zeta values, \emph{Math. Ann.} \textbf{374}(1-2)(2019), pp.\ 179-209.

\bibitem{Li2024}
J. Li, Unit cyclotomic multiple zeta values for $\mu_2,\mu_3$ and $\mu_4$, \emph{Adv. Math.} \textbf{438}(2024), 109466.

\bibitem{Murakami2021}
T.\ Murakami, On Hoffman's $t$-values of maximal height and generators of multiple zeta values,
\emph{Math. Ann.}, \textbf{382}(2022), pp.\ 421-458.

\bibitem{Panzer2017}
E. Panzer, The parity theorem for multiple polylogarithms, \emph{J.\ Number Theory} \textbf{172}(2017), pp.\ 93--113.

\bibitem{Rui2026}
H. Rui, Contour integrations and parity results of Hurwitz-type cyclotomic Euler sums, arXiv:2601.00035.

\bibitem{Todorov2014}
I. Todorov,  Polylogarithms and multizeta values in massless Feynman amplitudes. In Lie Theory and Its Applications in Physics; Dobrev, V., Ed.; Springer: Berlin, Heidelberg, 2014; Volume 111.

\bibitem{Tsu-2004}
H. Tsumura, Combinatorial relations for Euler-Zagier sums, \emph{Acta Arith.} \textbf{111}(2004), pp.\ 27-42.

\bibitem{Tsu-2007}
H. Tsumura, On the parity conjecture for multiple $L$-values of conductor four, \emph{Tokyo J. Math.}
\textbf{30}(2007), pp.\ 21-40.

\bibitem{Umezawa2025arxiv}
R. Umezawa, An explicit parity theorem for multiple polylogarithms, arXiv:2508.02040.

\bibitem{Xu-Wang2022}
C. Xu and W. Wang, Two variants of Euler sums, \emph{Monatsh. Math.} \textbf{199}(2022), pp.\ 431-454.

\bibitem{XuYanZhao2024}
C. Xu, L. Yan and J. Zhao, Alternating multiple mixed values: regularization, special values, parity, and dimension conjectures, \emph{Indagat. Math.} \textbf{35}(2024), pp.\ 1212-1248.

\bibitem{XuZhao2020a}
C. Xu and J. Zhao, Variants of multiple zeta values with even and
odd summation indices, \emph{Math. Zeit.} {\bf 300}(2022), pp.\ 3109-3142.

\bibitem{XuZhao2020d}
C.\ Xu and J. Zhao, Explicit relations of some variants of convoluted multiple zeta values, \emph{Ann. Math. Pura Appl.} \textbf{204}(2025), pp.\ 2065--2087.

\bibitem{YuanZh2014a}
H. Yuan and J. Zhao, Double shuffle relations of double zeta values
and double Eisenstein series of level $N$,
\emph{J.\ London Math.\ Soc.}  \textbf{92}(2)(2015), pp. \ 520--546.

\bibitem{DZ1994}
D. Zagier, Values of zeta functions and their applications, First European Congress
of Mathematics, Volume II, Birkhauser, Boston, \textbf{120}(1994), pp.\ 497--512.

\bibitem{Zhao2007d}
J. Zhao, Analytic continuation of multiple polylogarithms, \emph{Anal.\ Math.} \textbf{33}(2007), pp.\ 301--323.

\bibitem{Z2016}
J. Zhao, \emph{Multiple zeta functions, multiple polylogarithms and their special values}, Series on Number
Theory and its Applications, Vol.~12, World Scientific Publishing Co. Pte. Ltd., Hackensack, NJ, 2016.

\end{thebibliography}
\end{document}